\documentclass[11pt, twoside, leqno]{amsart}  
\usepackage{soul}
\usepackage{lipsum}
\usepackage{amsfonts}
\usepackage{graphicx}
\usepackage{epstopdf}
\usepackage{calligra}
\usepackage{amsfonts,amsmath,amsthm,amssymb}
\usepackage{mathtools}
\usepackage{hyperref}
\usepackage{autonum}
\usepackage{hhline}
\usepackage{array}
\usepackage{diagbox}
\usepackage{tcolorbox}
\usepackage{mdframed}
\usepackage{multicol}
\usepackage{graphicx}
\usepackage{subcaption}
\usepackage{moreverb}
\usepackage{bbm}
\usepackage[margin=1.3in]{geometry}
\usepackage{todonotes}
\usepackage{scalerel,amssymb}
\allowdisplaybreaks
\usepackage{mathrsfs}  
\usepackage{lineno}
\usepackage{todonotes}
\usepackage{tikz}
\usepackage{pgfplots}
\usetikzlibrary{arrows.meta}
\usepackage[numbers,sort&compress]{natbib}
\definecolor{mygreen}{HTML}{43a047}
\usepackage{subcaption}
\usepackage{doi}
\usepackage{alphalph}
\usepackage{booktabs}
\usepackage{makecell}
\usepackage{adjustbox}
\usepackage{appendix}
\usepackage{makecell}
\usepackage{array}
\newcolumntype{H}{>{\setbox0=\hbox\bgroup}c<{\egroup}@{}}

\def\calN{\mathcal{N}}
\def\calB{\mathcal{B}}
\def\calT{\mathcal{T}}
\def\calF{\mathcal{F}}

\def\ulaaa{\underline{\mathfrak{a}}}
\def\olaaa{\overline{\mathfrak{a}}}
\def\olbbb{\overline{\mathfrak{b}}}

\newcommand{\Om}{\Omega}
\newcommand{\D}{\Delta}

\newcommand{\Dt}{\textup{D}_t}

\def\aaa{\mathfrak{a}}
\def\bbb{\mathfrak{b}}

\def \xin{\xi^{(n)}}
\def \xit{\xi^{(n)}_t}
\def \xitt{\xi^{(n)}_{tt}}
\def \xittt{\xi^{(n)}_{ttt}}
\def \bxin{\boldsymbol{\xi}}
\def \bxit{\boldsymbol{\xi_t}}
\def \bxitt{\boldsymbol{\xi_{tt}}}
\def \bxittt{\boldsymbol{\xi_{ttt}}}
\def \ut{u_t}
\def \utt{u_{tt}}
\def \uttt{u_{ttt}}

\def \psitt{\psi_{tt}}
\def \psittt{\psi_{ttt}}

\def \taua {\tau^a}

\def\ppsi{u}
\def\ppsit{u_t}
\def\ppsitt{u_{tt}}
\def\ppsittt{u_{ttt}}
\def\ppsin{u^{(n)}}
\def\ppsin{u^{(n)}}
\def\ppsint{u_t^{(n)}}
\def\ppsintt{u_{tt}^{(n)}}
\def\ppsinttt{u_{ttt}^{(n)}}
\newcommand\ttbdelta{\delta}

\newcommand{\Dal}{{\textup{D}}_t^\alpha}
\newcommand{\Doal}{{\textup{D}}_t^{1-\alpha}}
\newcommand{\bmu}{\boldsymbol{\mu}}
\newcommand{\bxi}{\boldsymbol{\xi}}
\newcommand{\dt}{\, \textup{d} t}
\newcommand{\ds}{\, \textup{d} s }
\newcommand{\dx}{\, \textup{d} x}

\newcommand{\dxs}{\, \textup{d}x\textup{d}s}

\newcommand{\intTO}{\int_0^T \int_{\Omega}}
\newcommand{\inttO}{\int_0^t \int_{\Omega}}
\newcommand{\intt}{\int_0^t}
\newcommand{\intT}{\int_0^T}
\newcommand{\intO}{\int_{\Omega}}
\newcommand{\nLtwo}[1]{\|#1\|_{L^2}}

\newcommand{\nLtwoLtwo}[1]{\|#1\|_{L^2 (L^2)}}

\def\calX{\mathcal{X}}
\def\XWest{\mathcal{X}^{\textup{WB}}}
\def\calXBW{\mathcal{X}^{\textup{WB}}}
\def\calXBK{\mathcal{X}^{\textup{KB}}}
\def\calBBW{\mathcal{B}^{\textup{WB}}}
\def\calBBK{\mathcal{B}^{\textup{KB}}}

\newcommand{\R}{\mathbb{R}} 
\newcommand{\N}{\mathbb{N}} 
\newcommand{\Ltwo}{L^2(\Omega)}
\newcommand{\Hone}{H^1(\Omega)}
\newcommand{\Hneg}{H^{-1}(\Omega)}
\newcommand{\Honezero}{H_0^1(\Omega)}

\newcommand{\Honetwo}{{H_\diamondsuit^2(\Omega)}}
\newcommand{\Honethree}{{H_\diamondsuit^3(\Omega)}}


\newtheorem{theorem}{Theorem}
\newtheorem{lemma}{Lemma}
\newtheorem{proposition}{Proposition}

\newtheorem*{assumption*}{Assumptions}

\newtheorem{remark}{Remark}
\numberwithin{lemma}{section}
\numberwithin{proposition}{section}
\numberwithin{theorem}{section}
\numberwithin{equation}{section}
\newcommand{\bfq}{\boldsymbol{q}}
\newcommand{\bfmu}{\boldsymbol{\mu}}
\newcommand{\bfxi}{\boldsymbol{\xi}}
\newcommand{\calK}{\mathcal{K}}
\newcommand{\frakK}{\mathfrak{K}}
\newcommand{\tfrakK}{\tilde{\mathfrak{K}}}
\newcommand{\frakKone}{\mathfrak{K}_1}
\newcommand{\frakKtwo}{\mathfrak{K}_2}
\newcommand{\tfrakKone}{\tilde{\mathfrak{K}}_1}
\newcommand\Lconv{\ast}

\newcommand{\rt}{\tau_\theta}
\newcommand{\ttb}{\tau_\theta^b}


\definecolor{grey}{rgb}{0.5,0.5,0.5}
 
\usepackage{stackengine,scalerel}


\newcommand\T{\rule{0pt}{4ex}}       
\newcommand\B{\rule[-1.2ex]{0pt}{0pt}} 
\definecolor{darkgreen}{rgb}{0,0.5,0}

\makeatletter
\@namedef{subjclassname@2020}{\textup{2020} Mathematics Subject Classification}
\makeatother
\makeatletter
\newcommand{\leqnomode}{\tagsleft@true}
\newcommand{\reqnomode}{\tagsleft@false}
\makeatother
\makeatother  
\title[Multi-term nonlocal Jordan--Moore--Gibson--Thompson equations]{The vanishing relaxation time behavior of multi-term nonlocal Jordan--Moore--Gibson--Thompson equations}

\subjclass[2020]{35L75,35B25}
\keywords{fractional derivatives, singular limit, Jordan--Moore--Gibson--Thompson equation, nonlinear acoustics}
    
\author[B. Kaltenbacher and V. Nikoli\'{c}]{\small Barbara Kaltenbacher and Vanja Nikoli\'{c}}
\address{  \small
	Department of Mathematics, 
	Alpen-Adria-Universit\"at Klagenfurt 
	\\ Universit\"atsstra\ss e 65--67, A-9020 Klagenfurt, Austria}
\email{barbara.kaltenbacher@aau.at}
\address{ 
	Department of Mathematics \\ 
	Radboud University   \\ 
	Heyendaalseweg 135,
	6525 AJ Nijmegen, The Netherlands}
\email{vanja.nikolic@ru.nl}
\begin{document}
\vspace*{8mm}
\begin{abstract}
	The family of Jordan--Moore--Gibson--Thompson (JMGT) equations arises  in nonlinear acoustics when a relaxed version of the heat flux law is employed within the system of governing equations of sound motion. Motivated by the propagation of sound waves in complex media with anomalous diffusion, we consider here a generalized class of such equations involving two (weakly) singular memory kernels in the principal and non-leading terms. To relate them to the second-order wave equations, we investigate their vanishing relaxation time behavior. The key component of this singular limit analysis are the uniform bounds for the solutions of these nonlinear equations of fractional type with respect to the relaxation time. Their availability turns out to depend not only on the regularity and coercivity properties of the two kernels, but also on their behavior relative to each other and the type of nonlinearity present in the equations.
\end{abstract}

\vspace*{-7mm}
\maketitle           
\section{Introduction} \label{Sec:Introduction}
The family of Jordan--Moore--Gibson--Thompson (JMGT) equations~\cite{jordan2014second} arises in nonlinear acoustics when the classical Fourier heat flux law is replaced by the Maxwell--Cattaneo law~\cite{cattaneo1958forme} within the system of governing equations of sound propagation. The latter introduces thermal relaxation with the parameter $\tau>0$ thereby avoiding the so-called paradox of infinite speed of propagation. The resulting acoustic equations are then third-order in time:
\begin{equation} \label{general_JMGT_equation} \tag{\text{JMGT}}
\begin{aligned}
\begin{multlined}[t]
\tau \ppsittt+\aaa(\ppsi, \ppsit) \ppsitt
-c^2 \bbb(\ppsi, \ppsit) \Delta \ppsi - \tau c^2  \Delta\ppsi
- \ttbdelta  \Delta \ppsit +\calN(\ppsit, \nabla \ppsi, \nabla \ppsit)=0.
\end{multlined}
\end{aligned}
\end{equation}
The function $u$ stands for either the acoustic pressure or acoustic velocity potential. The functions $\aaa$, $\bbb$, and $\calN$ dictate the type of nonlinearity present in the equation; we will discuss them in depth together with modeling in Section~\ref{Sec:Modeling}.  The JMGT equations and their linearizations (known as Moore--Gibson--Thompson (MGT) equation) have been extensively studied in the recent mathematical literature; we refer to, e.g.,~\cite{bucci2020regularity,dell2017moore, chen2019nonexistence, kaltenbacher2011wellposedness, racke2020global} for a selection of the results on their well-posedness, regularity of solutions, and long-term behavior.  Also the stability of (J)MGT equations with additional memory terms has been extensively researched; see, e.g.,~\cite{dell2016moore, dell2020note,  lasiecka2017global, lasiecka2016moore} and the references provided therein. \\
\indent  Recently nonlocal generalizations of these equations of higher order have been put forward in~\cite{kaltenbacher2022time} based on using the Compte--Metzler fractional interpolations~\cite{compte1997generalized} of the Fourier and Maxwell--Cattaneo flux laws valid in media with anomalous diffusion, such as biological tissues. This type of modeling has significantly gained in importance with the rise of ultrasound imaging applications~\cite{szabo2004diagnostic}.  Motivated by such sound propagation, we consider here the following family of nonlocal generalizations of the \ref{general_JMGT_equation}  equation, given by 
\begin{equation} \label{general_wave_equation}
	\begin{aligned}
\begin{multlined}[t]
	\taua \frakKone *\ppsittt+\aaa \ppsitt
-c^2 \bbb\Delta \ppsi - \taua c^2  \frakKone*\Delta\ppsit
- \ttbdelta   \frakKtwo*\Delta \ppsitt  +\calN=0,
\end{multlined}
	\end{aligned}
\end{equation}
where $\Lconv$ denotes the Laplace convolution in time.  
The power $a$ is dependent on the kernel $\frakKone$ and included to ensure dimensional homogeneity. When $\frakKone$ is the Dirac delta distribution $\delta_0$ (with $a=1$) and $\frakKtwo=1$,  \eqref{general_wave_equation} formally reduces to the \ref{general_JMGT_equation} equation, up to modifying the right-hand side. However, the presence of the two kernels allows us to treat a much richer family of equations here than \eqref{general_JMGT_equation}. For example, equation \eqref{general_wave_equation} with suitable Abel kernels covers the time-fractional equations introduced and analyzed in~\cite{kaltenbacher2022time} under the name fractional Jordan--Moore--Gibson--Thompson (fJMGT) equations that correspond to the four fractional flux laws of Compte and Metzler~\cite{compte1997generalized}; we refer to upcoming Section~\ref{Sec:Modeling} for details.  \\
\indent As the relaxation parameter $\tau>0$ is small, it is of high interest to determine the behavior of solutions to \eqref{general_wave_equation} as it vanishes. This is the main goal of the present work. By formally setting the relaxation parameter to zero, one arrives at equations with the leading term of second order:
\begin{equation} \label{general_limiting_wave_equation}
\begin{aligned}
\begin{multlined}[t]
\aaa \ppsitt
-c^2 \bbb \Delta \ppsi 
- \ttbdelta   \frakKtwo*\Delta \ppsitt  +\calN=0.
\end{multlined}
\end{aligned}
\end{equation}
Our singular limit analysis is based on proving well-posedness of \eqref{general_wave_equation}, which we consider with homogeneous Dirichlet data on bounded domains and three initial conditions, uniformly in $\tau$. As one might expect, whether one can obtain the uniform bounds on the solutions is heavily influenced by the properties of the two kernels and their interplay with the nonlinearities present in the equations (that is, the properties of the functions $\aaa$, $\bbb$, and $\calN$). This will necessarily lead to delicate case distinctions. More precisely, we will consider two sets of assumptions on the kernels and then develop the corresponding theories; the details can be found in respective Sections~\ref{Sec:GFEIII} and \ref{Sec:GFEI}. The first set of assumptions will allow for a more standard testing strategy, using $(-\Delta)^\nu \ppsitt$ 
with $\nu\in\{0,1,2\}$ as test functions; the second set of assumptions will involve $\frakKone*(-\Delta)^\nu \ppsitt$ as a test function. \\
\indent The analysis will cover the following fractional JMGT equations introduced in~\cite{kaltenbacher2022time}:
\begin{equation} \label{fJMGTI} \tag{\text{fJMGT I}}
\begin{aligned}
	\tau^\alpha \textup{D}_t^\alpha \ppsitt+\aaa \ppsitt
-c^2 \bbb(\ppsi, \ppsit) \Delta \ppsi - \tau^\alpha c^2  \textup{D}_t^{\alpha} \Delta\ppsi
- \ttbdelta   \textup{D}_t^{1-\alpha}\Delta \ppsit  +\calN=0,
\end{aligned}
\end{equation}
\begin{equation} \label{fJMGTIII} \tag{\text{fJMGT III}}
\begin{aligned}
	\tau \ppsittt+\aaa \ppsitt
-c^2 \bbb \Delta \ppsi - \tau c^2 \Delta\ppsi
- \ttbdelta   \textup{D}_t^{1-\alpha} \Delta \ppsit  +\calN=0,
\end{aligned}
\end{equation} 
\begin{equation} \label{fJMGT} \tag{\text{fJMGT}}
\begin{aligned}
\tau^\alpha \textup{D}_t^\alpha \ppsitt+\aaa \ppsitt
-c^2 \bbb \Delta \ppsi - \tau^\alpha c^2  \textup{D}_t^{\alpha} \Delta\ppsi
- \ttbdelta   \Delta \ppsit  +\calN=0,
\end{aligned}
\end{equation}
 under various (different) restrictions in terms of the order of differentiation $\alpha$ and the involved nonlinearities (where we distinguish the so-called Westervelt--Blackstock and Kuznetsov--Blackstock type). Here numbers I and III indicate that the equations stem from the Compte--Metzler fractional flux laws introduced under the same numbers in~\cite{compte1997generalized}; the last one is unnumbered in \cite{compte1997generalized}. \\
\indent In terms of closely relevant works, we point out the singular limit analysis of third-order equations \eqref{general_JMGT_equation} on bounded domains in \cite{bongarti2020vanishing, kaltenbacher2019jordan, kaltenbacher2020vanishing}. In particular, our analysis follows in the spirit of~\cite{kaltenbacher2019jordan} by employing an energy method on a linearized problem in combination with a fixed-point strategy. We also point out two works which consider \eqref{general_wave_equation} in simplified settings that allow for optimizing the theory. The first one is~\cite{nikolic2023} with a tailored treatment of \eqref{general_wave_equation} in the case $\frakKtwo=1$ (leading to the \ref{fJMGT} equation) which allows for a different testing strategy compared to ours here and less restrictive assumptions on $\frakKone$. The second is \cite{meliani2023} which investigates linear versions of \eqref{general_wave_equation} allowing for a broader family of kernels and the treatment of equations based on the second Compte--Metzler law, among others. 
 \begin{table}[h]
	\captionsetup{width=0.9\linewidth}
	\begin{center} \small
		\begin{tabular}{|m{1.6cm}||m{8.68cm}||m{3.3cm}|} \hline&&\\ 
			Equation          &  $\tau$-uniform well-posedness or existence  & $\tau$ weak limits \\[4mm]
			\Xhline{2\arrayrulewidth}  \hline&&\\
			\ref{general_JMGT_equation}  &  Theorems~\ref{Thm:WellP_GFEIII_BWest} and \ref{Thm:WellP_GFEIII_BKuznetsov}  & Theorems~\ref{Thm:WeakLim_GFEIII_BWest} and~\ref{Thm:WeakLim_GFEIII_BKuznetsov} \\[4mm]
			\Xhline{0.1\arrayrulewidth} &&\\
			\ref{fJMGTI}  & \vspace*{-2mm}\makecell{\hspace*{-2.9cm} with  $\alpha <1/2$, $\ppsitt \vert_{t=0}=0$, Theorem~\ref{Thm:Wellp_testingK1_caseI}  \\[2mm] with  $\alpha \geq 1/2$, $\aaa \equiv 1$, $\ppsitt\vert_{t=0}=0$, Theorem~\ref{Thm:Wellp_testingK1_caseII} (existence, \\ \hspace*{-3.1cm} Westervelt--Blackstock nonlinearities)} & Theorem~\ref{Thm:WeakLim_testingK1} \\[4mm]
			\Xhline{0.1\arrayrulewidth} &&\\
			\ref{fJMGTIII}  & with  $\alpha>1/2$, Theorems~\ref{Thm:WellP_GFEIII_BWest} and \ref{Thm:WellP_GFEIII_BKuznetsov} &   Theorems~\ref{Thm:WeakLim_GFEIII_BWest} and~\ref{Thm:WeakLim_GFEIII_BKuznetsov} \\[4mm]
			\Xhline{0.1\arrayrulewidth} &&\\
			\ref{fJMGT}  & with  $\alpha \geq 1/2$, $\aaa \equiv 1$, $\ppsitt \vert_{t=0}=0$, Theorem~\ref{Thm:Wellp_testingK1_caseII}  (existence, Westervelt--Blackstock nonlinearities) & Theorem~\ref{Thm:WeakLim_testingK1} \\[4mm]
			\hline
		\end{tabular}
	\end{center}
	\caption{\small Main results of this work for the nonlinear JMGT and fJMGT equations}  \label{tab:recap}
\end{table} 

\indent We organize the rest of the exposition as follows. Section~\ref{Sec:Modeling} first gives the necessary background details on the modeling. We then split the analysis into two parts, corresponding to two sets of assumptions on the kernels. Section~\ref{Sec:GFEIII} considers kernels and equations amenable to testing with $(-\Delta)^\nu \ppsitt$ with $\nu \in \{1, 2\}$; this is, for example, the fJMGT III equation. Section~\ref{Sec:GFEI} considers kernels and equations amenable to testing with $\ppsitt$ but also $- \Delta \frakKone* \ppsitt$; this will be the case for the fJMGT I equation, for instance. Before proceeding, we summarize in Table~\ref{tab:recap} the uniform well-posedness and existence results of this work for the fractional JMGT equations as particular cases. 

\section{Acoustic equations based on nonlocal flux laws} \label{Sec:Modeling}
In this section, we discuss acoustic modeling to motivate the wave equations considered in this work in the context of nonlinear sound propagation in tissue-like media. 
\subsection{A generalized nonlocal heat flux law of the Maxwell--Cattaneo type} Classical acoustic equations that describe nonlinear sound motion in thermoviscous fluids are derived as approximations of the Navier--Stokes--Fourier system  based on the following Fourier heat flux law:
\begin{equation} \label{Fourier_flux_law}
\bfq= - \kappa \nabla \theta;
\end{equation}
 see, for example~\cite{blackstock1963approximate, jordan2016survey, crighton1979model}. Here $\bfq$ is the heat flux, $\theta$ the absolute temperature, and $\kappa>0$ denotes the thermal conductivity.\\
 \indent The acoustic equations studied in this work are based on employing a nonlocal generalization of \eqref{Fourier_flux_law} of the Maxwell--Cattaneo type which incorporates thermal relaxation as follows:
\begin{equation} \label{MC_flux_law_nonlocal}
	\bfq(t)+ \taua \int_0^t	\mathfrak{K}_1(t-s)\bfq_t(s)\ds= - \ttb \kappa \int_0^t \mathfrak{K}_2(t-s)\nabla \theta_t(s)\ds.
\end{equation}
Here $\tau>0$ stands for the thermal relaxation time and the constant $\rt$ as well as the powers $a$ and $b$ are there to ensure the dimensional homogeneity of the equation. 
 The introduced memory kernels $\frakKone$ and $\frakKtwo$ are assumed to be independent of $\tau$.  \\
\indent  This relation generalizes many flux laws in the literature. Fourier law \eqref{Fourier_flux_law} follows by setting $\tau=0$, $\frakKtwo=1$, and $\ttb=1$, where we assume that $\bfq(0)=0$ and $\nabla\theta(0)=0$. The well-known Maxwell--Cattaneo law~\cite{cattaneo1958forme}:
\begin{equation} \label{MC_law}
	\begin{aligned}
		\bfq+\tau \bfq_t= - \kappa \nabla \theta
	\end{aligned}
\end{equation}
follows by setting $a=1$, $\frakKone=\delta_0$ (the Dirac delta distribution), $\ttb=1$, and $\frakKtwo=1$. Importantly, \eqref{MC_flux_law_nonlocal} unifies (and generalizes) the Compte--Metzler fractional laws ~\cite{compte1997generalized}:
\begin{alignat}{3}
	\hspace*{-1.5cm}\text{\small(GFE I)}\hphantom{II}&& \qquad \qquad(1+\tau^\alpha \Dal)\boldsymbol{q}(t) =&&\, -\kappa {\rt^{1-\alpha}}\Doal \nabla \theta;\\[1mm]
	\hspace*{-1.5cm}\text{\small(GFE II)}\, \hphantom{I}&&\qquad \qquad (1+\tau^\alpha \Dal)\boldsymbol{q}(t) =&&\, -\kappa {\rt^{\alpha-1}}\Dt^{\alpha-1} \nabla \theta;\\[1mm]
	\hspace*{-1.5cm}\text{\small(GFE III)}\,\, && \qquad \qquad (1+\tau \partial_t)\boldsymbol{q}(t) =&&\, -\kappa {\rt^{1-\alpha}}\Doal \nabla \theta; \\[1mm]
	\hspace*{-1.5cm}\text{\small (GFE)}\hphantom{III}&&\qquad \qquad  (1+\tau^\alpha \Dal)\boldsymbol{q}(t) =&&-\kappa \nabla \theta. \hphantom{{\rt^{1-\alpha}}\Doal }
\end{alignat}
Although in~\cite{compte1997generalized} the fractional derivative is understood in the Riemann--Liouville sense, in this work $\Dt^{\eta}$ denotes the Caputo--Djrbashian fractional derivative:
\[
\Dt^{\eta}w(t)=\frac{1}{\Gamma(1-\eta)}\int_0^t (t-s)^{-\eta}\Dt^{\lceil\eta\rceil} w(s) \, \textup{d} s, \quad -1<\eta <1;
\]
see, for example,~\cite[\S 1]{kubica2020time} and~\cite[\S 2.4.1]{podlubny1998fractional} for the definition.
Here $n=\lceil\eta\rceil$, $n\in\{0,1\}$ is the integer obtained by rounding up $\eta$ and $\Dt^n$ is the zeroth or first derivative operator.  We may do this exchange at this point since it is assumed that $\bfq(0)=0$ and $\nabla\theta(0)=0$.\\
\indent Looking at the Compte--Metzler laws, we can see that GFE II has a different form compared to others since $\textup{D}_t^{\alpha-1}$ is an integral operator rather than a derivative for $\alpha\in(0,1)$. It thus does not fit properly into the framework of \eqref{MC_flux_law_nonlocal} and we do not consider it going forward. We refer to~\cite{meliani2023} for the treatment of acoustic waves based on this law in a linear setting. \\
\indent	Before proceeding we collect in Table~\ref{Table:Kernels_Compte--Metzler_laws} the kernels $\frakKone$ and $\frakKtwo$ for the three Compte--Metzler laws relevant for this work (and the powers $a$ and $b$)  which allow writing them in form of \eqref{MC_flux_law_nonlocal}. Here we use the short-hand notation for the Abel kernel:
\[
g_\alpha(t):= \frac{1}{\Gamma(\alpha)} t^{\alpha-1}.
\]
\begin{table}[h]
	\centering	
	\captionsetup{width=13cm}
	\begin{adjustbox}{max width=\textwidth}
		\begin{tabular}{|c||c|c|c|c|c|c|}
			\hline
			Flux law  & $\frakKone$ & $\frakKtwo$  &$a$  &$b$	\\
			\hline\hline
			GFE I\rule{0pt}{3ex} & $g_{1-\alpha}$ 	&$g_{\alpha}$	&$\alpha$ &$1-\alpha$  \\
			GFE III\rule{0pt}{3ex} &$\delta_0$			&$g_{\alpha}$	&$1$ &$1-\alpha$ 
			\\
			GFE \rule{0pt}{3ex} & $g_{1-\alpha}$	&$1   $		&$\alpha$ & - 	\\
			\hline
		\end{tabular}
	\end{adjustbox}
	~\\[2mm]
	\caption{\small Kernels for the  Compte--Metzler fractional laws} \label{Table:Kernels_Compte--Metzler_laws}
\end{table}
\begin{remark}[On the Gurtin--Pipkin approach]
An alternative approach of generalizing the heat flux laws is given by the Gurtin--Pipkin flux law~\cite{gurtin1968general}:
\begin{equation} \label{GP_flux_law}
\bfq(t)= - \kappa \int_0^t \calK_\tau(t-s)\nabla \theta(s) \ds.
\end{equation}
The resulting acoustic equations are then second order in time with damping of fractional type but the kernel $\calK_\tau$ may depend on $\tau$; we refer to \cite{kaltenbacher2022limiting} for their derivation and limiting analysis. 
\end{remark}
\subsection{Acoustic modeling with the generalized heat flux law} By closely following the steps of the derivation in \cite{kaltenbacher2022time} only now with generalized heat flux law instead of the fractional ones, the following nonlinear acoustic wave equation can be derived:
\begin{equation}\label{general_eq}
	\begin{aligned}
\begin{multlined}[t]\taua \frakKone*\psi_{ttt}+\aaa(\psi_t)\psi_{tt}
	-c^2\bbb(\psi_t) \Delta \psi -\taua c^2 \Delta \frakKone*\psi_t 
	-\ttb \delta \frakKtwo*\Delta \psi_{tt}\\+ \calN(\nabla \psi, \nabla \psi_t)=0,
	\end{multlined}
	\end{aligned}
\end{equation}
where either
\begin{equation} \label{Blackstock_nonlinearity}
	\aaa = 1, \quad \bbb(\psi_t)=1-2\tilde{k}\psi_t, \quad \calN(\nabla \psi, \nabla \psi_t)=\tilde{\ell} \partial_t(|\nabla \psi|)^2=2 \tilde{\ell} \nabla \psi \cdot \nabla \psi_t
\end{equation}
or
\begin{equation} \label{Kuznetsov_nonlinearity}
	\aaa(\psi_t) = 1+2\tilde{k}\psi_t, \quad \bbb=1, \quad \calN(\nabla \psi, \nabla \psi_t)=\tilde{\ell} \partial_t(|\nabla \psi|)^2=2 \tilde{\ell} \nabla \psi \cdot \nabla \psi_t.
\end{equation}
Equation \eqref{general_eq} can be understood as a generalization of the time-fractional models considered in \cite{kaltenbacher2022time}, with kernels $\frakKone$ and $\frakKtwo$ generalizing those in Table~\ref{Table:Kernels_Compte--Metzler_laws}. \\
\indent In the limiting case $\tau=\delta=0$, having nonlinearities \eqref{Blackstock_nonlinearity} corresponds to the classical Blackstock equation~\cite{blackstock1963approximate} in nonlinear acoustics: 
\begin{equation} \label{Blackstock_equation}
	\begin{aligned}
\psi_{tt}
		-c^2(1+2 \tilde{k}\psi_t) \Delta \psi + 2 \tilde{\ell} \nabla \psi \cdot \nabla \psi_t=0,
	\end{aligned}
\end{equation}
and \eqref{Kuznetsov_nonlinearity} to the Kuznetsov equation~\cite{kuznetsov1971equations}:
\begin{equation} \label{Kuznetsov_equation}
	\begin{aligned}
		(1-2 \tilde{k}\psi_t)\psi_{tt}
		-c^2 \Delta \psi + 2 \tilde{\ell} \nabla \psi \cdot \nabla \psi_t=0.
	\end{aligned}
\end{equation}
For the Kuznetsov equation above, it is usual to employ the approximation
\begin{equation}
	|\nabla \psi|^2 \approx c^{-2}\psi_t^2,
\end{equation}
when cumulative nonlinear effects dominate the local ones, and in this manner simplify it by the Westervelt equation~\cite{westervelt1963parametric}. Using this approximation in \eqref{general_eq} results in
\begin{equation} \label{West_type_potential}
	\begin{aligned}
		\begin{multlined}[t]
\taua \frakKone*\psi_{ttt}+
{\aaa(\psi_t)}
\psi_{tt}
-c^2\bbb(\psi_t) \Delta \psi -\taua c^2 \frakKone*\Delta \psi_t
-\ttb \delta \frakKtwo*\Delta \psi_{tt}=0
		\end{multlined}
	\end{aligned}
\end{equation}
with $k=k+c^{-2}\tilde{\ell} $.  \\
\indent It is also common to express the Westervelt equation in terms of the acoustic pressure $p$. Formally taking the time derivative of \eqref{West_type_potential} and employing the pressure-potential relation \[p=\varrho \psi_t\]
as well as the approximation $\Delta \psi\approx c^{-2}\psi_{tt}$ justified by Blackstock's scheme \cite{hamilton1998nonlinear,lighthill1956viscosity} that allows to use a lower order approximation within a higher order term (here it is a first linear approximation used in a quadratic term)
leads to the pressure form 
\begin{equation} \label{JMGT_West_nonlocal_pressure}
	\begin{aligned}
		\begin{multlined}[t]
			\taua \frakKone*(p_{tt}-c^2 \Delta p)_t+
{\aaa(\tfrac{p}{\rho})}
p_{tt}
			- c^2
{\bbb(\tfrac{p}{\rho})}
\Delta p	-	\ttb \delta  \frakKtwo*\Delta p_{tt} 
{+\tfrac{1}{\varrho}(\aaa'(\tfrac{p}{\varrho})-\bbb'(\tfrac{p}{\varrho}))}
p_t^2\\= {f}
		\end{multlined}
	\end{aligned}
\end{equation}
with the right-hand side
\begin{equation} \label{source_Westervelt_type}
	{f}(t) = -\taua \frakKone(t) p_{tt}(0)+\taua c^2 \frakKone(t) \Delta p(0) + \ttb\delta \frakKtwo(t)\Delta p_{t}(0).
\end{equation}
To be able to treat all these different acoustic equation, we unify them in one model. \\[1mm]
\noindent \emph{Equations considered in this work.} We assume throughout that $\Omega \subset \R^d$ with $d \in \{1, 2, 3\}$ is a bounded and sufficiently smooth domain. $T>0$ denotes the final propagation time. Motivated by the modeling discussed above, we study the following general equation:
\begin{equation}\label{general_eq_ppsi}
	\begin{aligned}
		\taua \frakKone*\ppsittt+\aaa \psitt
		-c^2\bbb \Delta \ppsi -\taua c^2 \frakKone* \Delta \ppsit
		-\ttbdelta \frakKtwo*\Delta \ppsitt+ \calN =f,
	\end{aligned}
\end{equation}
 coupled with initial data
\begin{equation} \label{IC}
	(\ppsi, \ppsit, \ppsitt)\vert_{t=0}=(\ppsi_0, \ppsi_1, \ppsi_2),
\end{equation}
and homogeneous Dirichlet boundary conditions
\begin{equation} \label{BC}
	\ppsi \vert_{\partial \Omega} =0.
\end{equation}
We distinguish the following two cases that require different regularity assumptions on the initial data:
\begin{itemize}
	\item Equations of Westervelt--Blackstock-type with
	\begin{equation}
			\begin{aligned}
	\aaa=\aaa(\ppsi)=1+2k_1 \ppsi, \quad \bbb=1-2k_2 \ppsi, \quad
	 \calN=\calN(\ppsit) = 2k_3 \ppsit^2; 
	\end{aligned}
	\end{equation}
\item Equations of Kuznetsov--Blackstock type with
	\begin{equation}
		\begin{aligned}
	\aaa=\aaa(\ppsit)=1+2k_1 \ppsit, \ \bbb=\bbb(\ppsit)=1-2k_2 \ppsit, \
	 \calN=\calN(\nabla \ppsi, \nabla \ppsit) = 2k_3 \nabla \ppsi \cdot \nabla \ppsit,
	\end{aligned}
\end{equation}
\end{itemize}
where we assume $k_{1,2,3} \in \R$. Note that to relax the notation we have merged the constant $\ttb \delta$ into the coefficient $\ttbdelta>0$ in \eqref{general_eq_ppsi}, which therefore no longer has the usual dimension of sound diffusivity. To be able to take limits as $\tau \searrow 0$ we will rely on the damping term containing $\frakKtwo$ and therefore assume $\ttbdelta>0$ to be fixed. As we are interested in the vanishing behavior $\tau \searrow 0$, we assume throughout that $\tau \in (0, \bar{\tau}]$ for some fixed $\bar{\tau}>0$. \\
\indent The Westervelt--Blackstock-type equation incorporates the nonlinearities that arise in pressure form \eqref{JMGT_West_nonlocal_pressure}. We point out that this pressure form is more general than the equation considered in \cite{kaltenbacher2022time, nikolic2023} where $\bbb \equiv 1$ and which was termed of Westervelt-type. Here we also allow for $\bbb=\bbb(u)$ and thus call it of Westervelt--Blackstock type. \\
\indent The Kuznetsov--Blackstock-type equation encompasses \eqref{Blackstock_nonlinearity}, \eqref{Kuznetsov_nonlinearity}  and \eqref{West_type_potential}. It  involves $\ut$ instead of $u$ in the nonlinearities tied to the leading and second terms in the limiting equation ($\tau=0$) and also contains a quadratic gradient term. For this reason, it requires stronger regularity assumptions on the data in the well-posedness analysis compared to the Westervelt--Blackstock case.

\subsection{General strategy in the singular limit analysis} 
Our strategy in the singular limit analysis is based on first deriving $\tau$-uniform bounds for a linearization of \eqref{general_eq_ppsi} with a source term, given by
\begin{equation} \label{JMGT_Kuzn_nonlocal_lin}
\begin{aligned}
\begin{multlined}[t]
\taua \frakKone\Lconv \ppsi_{ttt} +\aaa(x,t)\ppsi_{tt}
-c^2\bbb(x,t)\Delta \ppsi 
- \taua c^2 \frakKone\Lconv \Delta \ppsit	-\ttbdelta  \frakKtwo\Lconv \Delta \ppsi_{tt}=f(x,t),
\end{multlined}
\end{aligned}
\end{equation}
 and later using a suitable fixed-point theorem on the mapping
 \[
 \calT : u^* \mapsto u
 \] to transfer the result to the nonlinear problem. To this end,  the two types of nonlinearities will necessitate different assumptions on the smoothness of data and coefficients $\aaa$ and $\bbb$ (and in some cases prevent certain arguments):
 \begin{itemize}
 	\item Westervelt--Blackstock type with
 	\begin{equation}
 		\aaa=1+2k_1 \ppsi^*, \quad \bbb=1-2k_2 \ppsi^*, \qquad f=-2k_3 (\ppsit^*)^2; 
 	\end{equation}
 	\item Kuznetsov--Blackstock-type with
 	\begin{equation}
 		\aaa=1+2k_1 \ppsit^*, \quad \bbb=1-2k_2 \ppsit^*, \quad f =- 2k_3 \nabla \ppsi^* \cdot \nabla \ppsit^*,
 	\end{equation}
\end{itemize}
The fixed-point of the mapping $\ppsi^*=\ppsi$ will give us the solution of the nonlinear problem. The key component of this analysis are the uniform bounds for the solutions of \eqref{JMGT_Kuzn_nonlocal_lin} with respect to $\tau$.  To obtain them, we will  employ the following two testing strategies
\begin{itemize}
	\item Section~\ref{Sec:GFEIII}: Testing with $-\Delta \ppsitt$ and $\Delta^2 \ppsitt$;
	\item Section~\ref{Sec:GFEI}: Testing with $\ppsitt$ and $- \Delta \frakKone* \ppsitt$,
\end{itemize}
which can be rigorously justified through a Faedo--Galerkin procedure; cf.\ Appendix~\ref{Appendix:Galerkin}.  To make the testing procedure work we will introduce two sets of regularity and coercivity assumptions on the kernels in the corresponding sections.  As a consequence, among the equations obtained from the Compte--Metzler laws, the first testing strategy will work for the \ref{fJMGTIII} equation. The second testing strategy will turn out to work for proving uniform solvability of the \ref{fJMGT} and \ref{fJMGTI} equations but if $\alpha>1/2$, we will have the restriction of Westervelt--Blackstock type on the allowed nonlinearities. 

\subsection{Notation} 
	Below we often use the notation $x\lesssim y$ for $x\leq C\, y$ with a constant $C>0$ that does not depend on the thermal relaxation time $\tau$.  We use $\lesssim_T$ to emphasize that $C=C(T)$ tends to $\infty$ as $T \rightarrow \infty$. \\
\indent We use $(\cdot, \cdot)_{L^2}$ to denote the scalar product on $L^2(\Omega)$ and $\sim$ to denote equivalence of norms. 
 We often omit the spatial and temporal domain when writing norms; for example, $\|\cdot\|_{L^p(L^q)}$ denotes the norm on the Bochner space $L^p(0,T; L^q(\Omega))$. We use $\|\cdot\|_{L^p_t(L^q)}$ to denote the  norm on $L^p(0,t; L^q(\Omega))$ for $t \in (0,T)$.
\subsection{Regularity of the kernels} 
As already mentioned above, several coercivity assumptions will have to be made on the kernels below.
However, concerning their regularity, throughout this paper we will only assume that
\begin{equation}\label{reg_kernels}
\frakKone, \, \frakKtwo \, \in \{\delta_0\} \cup L^1(0,T).
\end{equation}
In fact, the analysis below would also apply to measures $\frakKone\Lconv, \, \frakKtwo\Lconv \, \in \mathcal{M}(0,t)=C_b([0,T])^*$, however, at the cost of somewhat increased technicality. We will be reminded of this possibility when using the following norm below: 
\[
\|\frakKone\|_{\mathcal{M}(0,T)}=\begin{cases}
1 &\text{ if }\frakKone=\delta_0,\\
\|\frakKone\|_{L^1(0,T)}&\text{ if }\frakKone\in L^1(0,T).
\end{cases}
\]

\subsection{Auxiliary theoretical results} We recall from \cite{meliani2023} a result that will allow us to extract appropriately converging subsequences later on in the existence proofs; see, for example, Proposition~\ref{Prop:Wellp_GFEIII_BWest_lin}.   
\begin{lemma}[Sequential compactness with the Caputo--Djrbashian derivative, see~ \cite{meliani2023}] \label{Lemma:Caputo_seq_compact}
	Let $1\leq p\leq \infty$, and let $\frakK \in L^1(0,T)$ be such that there exists $\tfrakK \in L^{p'}(0,T)$ for which $\tfrakK \Lconv \frakK=1$ with $p' =  \frac{p}{p-1}$. Consider the space
	\begin{equation} \label{def_Xfp}
		X_\frakK^p = \{u \in L^p(0,T) \ |\ \frakK \Lconv u_t \in L^p(0,T)\},
	\end{equation}
	with the norm
	\[ \|\cdot\|_{X_\frakK^p} = \big(\|u\|_{L^p}^p + \|(\frakK\Lconv u_t)\|_{L^p}^p \big)^{1/p},\] and the usual modification for $p=\infty$. The following statements hold true:
	\begin{itemize}
		\item 	The space $X_\frakK^p$ is reflexive for $1<p<\infty$ and separable for $1 \leq p<\infty$.
		\item The unit ball $B_X^p$ of $X_\frakK^p$  is weakly sequentially compact for $1<p<\infty$, and $B_X^\infty$ is weak-$*$ sequentially compact. 
		\item The space $X_\frakK^p$ continuously embeds into $C[0,T]$.
	\end{itemize}
\end{lemma}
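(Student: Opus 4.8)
The plan is to build everything on the resolvent identity $\tfrakK\Lconv\frakK=1$, where $1=g_1$ denotes the constant function. For smooth $u$ one has $\tfrakK\Lconv(\frakK\Lconv u_t)=(\tfrakK\Lconv\frakK)\Lconv u_t=1\Lconv u_t=u-u(0)$, so that $u=u(0)+\tfrakK\Lconv(\frakK\Lconv u_t)$. The first and most delicate step is to establish this representation for an arbitrary $u\in X_{\frakK}^p$, i.e.\ for $u\in L^p(0,T)$ whose distributional derivative satisfies $\frakK\Lconv u_t\in L^p(0,T)$. I would do this by approximating $u$ in $L^p(0,T)$ by smooth functions, using that $v\mapsto\tfrakK\Lconv v$ is continuous on $L^p(0,T)$ by Young's inequality and that $u\mapsto\frakK\Lconv u_t$ is continuous in the sense of distributions, together with associativity of the one-sided (Volterra) convolution for $L^1$ kernels acting on a first-order distribution. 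The integration constant is then recovered from $u$ itself, e.g.\ by averaging the identity over $(0,T)$, which gives $u(0)=\tfrac1T\int_0^T u\dt-\tfrac1T\int_0^T\tfrakK\Lconv(\frakK\Lconv u_t)\dt$.

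Granting the representation $u=u(0)+\tfrakK\Lconv(\frakK\Lconv u_t)$, the embedding into $C[0,T]$ is immediate: since $\tfrakK\in L^{p'}(0,T)$ and $\frakK\Lconv u_t\in L^p(0,T)$, Young's inequality shows that $\tfrakK\Lconv(\frakK\Lconv u_t)$ has a continuous representative with $\|\tfrakK\Lconv(\frakK\Lconv u_t)\|_{C[0,T]}\le\|\tfrakK\|_{L^{p'}}\|\frakK\Lconv u_t\|_{L^p}$ (at the endpoints $p\in\{1,\infty\}$ one uses density of $C[0,T]$ in $L^1$, respectively the $L^\infty\Lconv L^1$ estimate). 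Combining this with the bound $|u(0)|\le\tfrac1T\|u\|_{L^1}+\|\tfrakK\|_{L^{p'}}\|\frakK\Lconv u_t\|_{L^p}$ from the averaging above yields $\|u\|_{C[0,T]}\lesssim\|u\|_{X_{\frakK}^p}$.

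For the remaining structural properties I would use the isometry $J\colon X_{\frakK}^p\to L^p(0,T)\times L^p(0,T)$, $u\mapsto(u,\frakK\Lconv u_t)$, with the product carrying the $\ell^p$-norm that makes $J$ isometric. Its range is a closed subspace: if $u_n\to u$ in $L^p(0,T)$ and $\frakK\Lconv(u_n)_t\to w$ in $L^p(0,T)$, then $\frakK\Lconv(u_n)_t\to\frakK\Lconv u_t$ in $\mathcal{D}'(0,T)$ (pass to the limit after testing against a smooth compactly supported function), so $w=\frakK\Lconv u_t$ and $(u,w)\in J(X_{\frakK}^p)$. Hence $X_{\frakK}^p$ is isometrically isomorphic to a closed subspace of $L^p(0,T)\times L^p(0,T)$, which is reflexive for $1<p<\infty$ and separable for $1\le p<\infty$; both properties are inherited by closed subspaces. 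Weak sequential compactness of the unit ball for $1<p<\infty$ then follows from reflexivity. For $p=\infty$, given a bounded sequence $(u_n)$ in $X_{\frakK}^\infty$ I would view $L^\infty(0,T)=(L^1(0,T))^*$, extract a subsequence along which $u_n\overset{*}{\rightharpoonup}u$ and $\frakK\Lconv(u_n)_t\overset{*}{\rightharpoonup}w$, and identify $w=\frakK\Lconv u_t$ by testing against smooth functions; then $u\in X_{\frakK}^\infty$ with $\|u\|_{X_{\frakK}^\infty}\le\liminf_n\|u_n\|_{X_{\frakK}^\infty}$, which is the asserted weak-$*$ sequential compactness.

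The principal obstacle is entirely in the first step: turning the formal computation $u=u(0)+\tfrakK\Lconv(\frakK\Lconv u_t)$ into a rigorous statement for $u$ that is merely $L^p$ with a distributional time derivative---justifying associativity of the Volterra convolution in this low-regularity setting and pinning down the integration constant. Once the representation formula is available, the embedding into $C[0,T]$ is a direct application of Young's inequality, and reflexivity, separability and weak(-$*$) sequential compactness are soft consequences via the closed-subspace argument and Banach--Alaoglu.
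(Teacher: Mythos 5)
The paper does not actually prove this lemma: it is recalled verbatim from~\cite{meliani2023} and used as a black box, so there is no in-paper proof to compare your argument against. Judged on its own terms, your proposal is the natural self-contained route and is essentially correct. The representation $u=u(0)+\tfrakK\Lconv(\frakK\Lconv u_t)$ is indeed the engine of the whole lemma, and you correctly identify that the only genuinely delicate point is giving meaning to $\frakK\Lconv u_t$ for $u$ that is merely $L^p$ and justifying the associativity step; everything downstream (Young's inequality for the $C[0,T]$ embedding, the isometry $J:u\mapsto(u,\frakK\Lconv u_t)$ onto a closed subspace of $L^p\times L^p$, inheritance of reflexivity and separability, Eberlein--\v{S}mulian for $1<p<\infty$ and sequential Banach--Alaoglu over the separable predual $L^1\times L^1$ for $p=\infty$) is routine and correctly executed. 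Your trick of pinning down the additive constant by averaging over $(0,T)$ cleanly avoids the apparent circularity of needing $u(0)$ before continuity is known.

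Two small refinements would tighten the argument. First, rather than approximating by smooth functions, it is cleaner to take the very definition of membership in $X_\frakK^p$ to be the existence of $w\in L^p(0,T)$ and $c\in\R$ with $u=c+\tfrakK\Lconv w$ (equivalently $\frakK\Lconv(u-c)=1\Lconv w$), and to set $\frakK\Lconv u_t:=w$; then associativity is just Fubini for Volterra convolutions of integrable kernels against an $L^p$ function, and no distributional convolution ever appears. Second, in the closed-range and weak-$*$ limit identification steps, testing $\frakK\Lconv(u_n)_t$ directly against $C_0^\infty(0,T)$ requires the time-flipped adjoint convolution $\phi\mapsto\int_s^T\frakK(t-s)\phi(t)\dt$, whose image need not be compactly supported; it is safer to identify the limit through the representation formula itself (if $u_n\rightharpoonup u$ and $w_n\rightharpoonup w$, then $\tfrakK\Lconv w_n\rightharpoonup\tfrakK\Lconv w$ and the constants $u_n(0)$ converge along a subsequence, whence $u=c+\tfrakK\Lconv w$), exactly as you do implicitly in the $p=\infty$ case. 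With these adjustments the proof is complete.
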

We also state and prove a result which will be helpful in verifying a coercivity assumption on the two kernels in Section~\ref{Sec:GFEIII}.
\begin{lemma} \label{Lemma_Hneg_alphahalf}
	Let  $y\in W^{1,1}(0,t)$ with $y'\in H^{-\alpha/2}(0,t)$ for $\alpha \in (0,1)$. Then
	\[
	\|y-y(0)\|_{L^\infty(0,t)}\lesssim \|y-y(0)\|_{H^{1-\alpha/2}(0,t)}\sim 
	\|I_t^{\alpha/2}y'\|_{L^2(0,t)}
	\sim\|y'\|_{H^{-\alpha/2}(0,t)}.
	\]
\end{lemma}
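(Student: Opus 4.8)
The plan is to exploit the identity $y-y(0)=I_t^1 y'$, valid because $y\in W^{1,1}(0,t)$ is absolutely continuous, together with the index law for Riemann--Liouville integrals, $I_t^1=I_t^{1-\alpha/2}\circ I_t^{\alpha/2}$ on $L^1(0,t)$. Writing $w:=I_t^{\alpha/2}y'=\Dt^{1-\alpha/2}y$, this gives $y-y(0)=I_t^{1-\alpha/2}w$. With this reformulation the asserted chain splits into three links: (i) $\|w\|_{L^2(0,t)}\sim\|y'\|_{H^{-\alpha/2}(0,t)}$; (ii) $\|I_t^{1-\alpha/2}w\|_{H^{1-\alpha/2}(0,t)}\sim\|w\|_{L^2(0,t)}$; and (iii) $\|y-y(0)\|_{L^\infty(0,t)}\lesssim\|y-y(0)\|_{H^{1-\alpha/2}(0,t)}$. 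Observe that (i), applied to the a priori $L^1$ function $w=I_t^{\alpha/2}y'$, is already what guarantees $w\in L^2(0,t)$, hence that the right-hand side of (ii) is finite.

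For (i) I would argue by duality. A Fubini exchange shows that the $L^2(0,t)$-adjoint of the (left-sided) operator $I_t^{\alpha/2}$ is the right-sided Riemann--Liouville integral $I_{t-}^{\alpha/2}\psi(r)=\frac{1}{\Gamma(\alpha/2)}\int_r^t(s-r)^{\alpha/2-1}\psi(s)\ds$, so that $\|I_t^{\alpha/2}y'\|_{L^2(0,t)}=\sup_{\|\psi\|_{L^2(0,t)}\le 1}\langle y',\,I_{t-}^{\alpha/2}\psi\rangle$, the pairing being the $H^{-\alpha/2}$--$H^{\alpha/2}$ duality on $(0,t)$. Since $\alpha/2\in(0,1/2)$, one invokes the classical mapping property of fractional integration of order below $1/2$: $I_{t-}^{\alpha/2}$ is an isomorphism of $L^2(0,t)$ onto $H^{\alpha/2}(0,t)$ with $\|I_{t-}^{\alpha/2}\psi\|_{H^{\alpha/2}(0,t)}\sim\|\psi\|_{L^2(0,t)}$ (cf.\ the fractional-calculus results used in \cite{meliani2023} and the references therein). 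Substituting $\psi=(I_{t-}^{\alpha/2})^{-1}\phi$ turns the supremum, up to the equivalence constant, into $\sup\{\langle y',\phi\rangle/\|\phi\|_{H^{\alpha/2}(0,t)}: \phi\in H^{\alpha/2}(0,t)\}=\|y'\|_{H^{-\alpha/2}(0,t)}$, which is (i).

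For (ii), since $1-\alpha/2\in(1/2,1)$, the operator $I_t^{1-\alpha/2}$ is bounded from $L^2(0,t)$ into $H^{1-\alpha/2}(0,t)$ and bounded below, i.e.\ an isomorphism onto its closed range (the subspace of $H^{1-\alpha/2}(0,t)$ of functions vanishing at the left endpoint), so $\|I_t^{1-\alpha/2}w\|_{H^{1-\alpha/2}(0,t)}\sim\|w\|_{L^2(0,t)}$ --- again a standard property of Riemann--Liouville integrals on a bounded interval. Finally, for (iii) I would use that $f:=y-y(0)$ vanishes at the left endpoint: by the one-dimensional embedding $H^{1-\alpha/2}(0,t)\hookrightarrow C^{0,(1-\alpha)/2}[0,t]$ one gets $|f(s)|=|f(s)-f(0)|\lesssim s^{(1-\alpha)/2}\,\|f\|_{H^{1-\alpha/2}(0,t)}$, hence $\|f\|_{L^\infty(0,t)}\lesssim t^{(1-\alpha)/2}\|f\|_{H^{1-\alpha/2}(0,t)}\le T^{(1-\alpha)/2}\|f\|_{H^{1-\alpha/2}(0,t)}$, uniformly in $t\in(0,T)$; equivalently, (iii) drops straight out of $f=I_t^{1-\alpha/2}w$ and Cauchy--Schwarz, $|f(s)|\le \frac{s^{(1-\alpha)/2}}{\Gamma(1-\alpha/2)\sqrt{1-\alpha}}\,\|w\|_{L^2(0,t)}$, combined with (ii). Chaining (iii), (ii), (i) gives the statement.

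The step I expect to carry the real weight is the pair (i)--(ii): the rest is just the index law, a Fubini exchange, a Cauchy--Schwarz estimate and a one-dimensional Sobolev embedding. The delicate ingredients are the \emph{sharp} mapping properties of the Riemann--Liouville operators between $L^2(0,t)$ and the interval spaces $H^{\pm\sigma}(0,t)$ --- in particular that these operators have bounded inverses (which is what produces the lower bounds ``$\gtrsim$'') and that one handles correctly the endpoint behavior as the smoothness index crosses $1/2$. I would import these facts from the fractional-calculus literature rather than reprove them here.
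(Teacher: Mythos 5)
Your proposal is correct and follows essentially the same route as the paper: the paper also splits the chain into a duality argument identifying $\|y'\|_{H^{-\alpha/2}(0,t)}$ with $\|I_t^{\alpha/2}y'\|_{L^2(0,t)}$, the Riemann--Liouville isomorphism results (Theorems 2.1--2.5 of the Kubica--Ryszewska--Yamamoto reference) for the $H^{1-\alpha/2}$ equivalence, and the one-dimensional Sobolev embedding for the $L^\infty$ bound. The only cosmetic difference is that the paper implements the duality via a time-flip of the left-sided operator where you use its $L^2$-adjoint (the right-sided integral) directly --- these are the same computation.
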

\begin{proof} 
	We introduce the time-flip operator as \[\overline{w}^{t}(s)=w(t-s).\]
	By \cite[Theorem 2.2, Corollary 2.1]{kubica2020time}, 
	we have
	\[\|\overline{w}^{t}\|_{H^{\alpha/2}(0,t)}\sim \|(\textup{I}_t^{\alpha/2})^{-1}\overline{w}^{t}\|_{L^2(0,t)},\]
	where
	\[
	\textup{I}^\eta_t y(t)= \frac{1}{\Gamma(\eta)} \int_0^t (t-s)^{\eta-1} y(s) \ds, \ \eta>0.
	\] 
	Additionally employing the identity $\langle \overline{a}^{t}, b\rangle_{L^2(0,t)} = (a*b)(t)= (b*a)(t)$,
	leads to 
	\begin{equation}
	\begin{aligned}
	\|y'\|_{H^{-\alpha/2}(0,t)}
	=&\, \sup_{w\in C_0^\infty(0,t)}\frac{|\langle y',w\rangle_{L^2(0,t)}|}{\|w\|_{H^{\alpha/2}(0,t)}}
	=\sup_{w\in C_0^\infty(0,t)}\frac{|\langle \overline{y'}^t,\overline{w}^t\rangle_{L^2(0,t)}|}{\|\overline{w}^t\|_{H^{\alpha/2}(0,t)}}\\
	\sim&\,\sup_{\tilde{w}\in C_0^\infty(0,t)}\frac{|\langle \overline{y'}^t, \textup{I}_t^{\alpha/2}\tilde{w}\rangle_{L^2(0,t)}|}{\|\tilde{w}\|_{L^2(0,t)}}
	=\sup_{\tilde{w}\in C_0^\infty(0,t)}\frac{|(y'*g_{\alpha/2}*\tilde{w})(t)|}{\|\tilde{w}\|_{H^{\alpha/2}(0,t)}}\\
	=&\,\sup_{\tilde{w}\in C_0^\infty(0,t)}\frac{|\langle \textup{I}_t^{\alpha/2} y',\overline{\tilde{w}}^t\rangle_{L^2(0,t)}}{\|\tilde{w}\|_{L^2(0,t)}}
	=\|I^{\alpha/2} y'\|_{L^2(0,t)},
	\end{aligned}
	\end{equation}
	where we have set $\tilde{w}=(\textup{I}_t^{\alpha/2})^{-1}\overline{w}^{t}$ and used $\|\overline{\tilde{w}}^t\|_{L^2(0,t)}=\|\tilde{w}\|_{L^2(0,t)}$.
	Now due to Sobolev's embedding and $1-\frac{\alpha}{2}>\frac12$, as well as 
	\cite[Theorems 2.4, 2.5, p.\ 22]{kubica2020time}, we have
	\[
	\|y-y(0)\|_{L^\infty(0,t)}\lesssim \|y-y(0)\|_{H^{1-\alpha/2}(0,t)}\sim 
	\|I_t^{\alpha/2}y'\|_{L^2(0,t)}
	\sim\|y'\|_{H^{-\alpha/2}(0,t)},
	\]
	as claimed.
\end{proof}

\section{Testing with canonical test functions} \label{Sec:GFEIII} 
In this section, we perform the analysis of equations \eqref{general_eq_ppsi} amenable to testing with $(-\Delta)^\nu \ppsi_{tt}$ where $\nu \geq 0$. These are the canonical test functions for the third-order  (Jordan--)Moore--Gibson--Thompson  equations  and thus make an obvious choice for an attempt at a uniform analysis here.  Unsurprisingly, among the equations based on the Compte--Metzler laws, this testing will suffice for the acoustic equation \eqref{fJMGTIII} with the integer-order leading term, obtained using the third Compte--Metzler law. \\[2mm]
\noindent \emph{Assumptions on the two kernels in this section.} 
Recall that throughout the paper, we assume the regularity of the two kernels given in \eqref{reg_kernels}. In this section we make the following additional assumption on the resolvent of the leading kernel:
\begin{equation}\label{Athree_GFEIII} 
	\text{there exists} \  \tfrakKone \in L^2(0,T), \text{ such that }\frakKone * \tfrakKone =1.   \tag{$\mathcal{A}_1$}
\end{equation}
In case of the Abel kernel, this is equivalent to asking that the fractional order of differentiation $\alpha$ is larger than $ 1/2$. We next need the coercivity assumptions; one on the leading kernel and one on the two kernels combined. We assume that there exist constants $C$, $\underline{c}$, $\overline{C}>0$, independent of $\tau$, such that 
\begin{equation}\label{Afour_GFEIII} 
	\int_0^{t'} \left(\frakKone* y' \right)(t) \,y(t)\dt \geq - C|y(0)|^2, \quad y\in X^2_{\frakKone}(0,t'),   \tag{$\mathcal{A}_2$}
\end{equation}
and for all $\tau \geq 0$
\begin{equation}\label{Afive_GFEIII}    \tag{$\mathcal{A}_3$}
	\begin{aligned}	 
		&\int_0^{t'} \left(\taua c^2  \frakKone*y +\ttbdelta \frakKtwo*y'\right)(t) y'(t) \dt  
			\geq& \underline{c}\|y\|_{L^\infty(0,t')}^2-\overline{C}|y(0)|^2  
		, \ y\in W^{1,1}(0,t');   
	\end{aligned}
\end{equation}	
recall that the space $X^2_{\frakKone}(0,t')$ is defined in \eqref{def_Xfp} with $p=2$. These assumptions are suited to the analysis of the wave equation \eqref{general_wave_equation} that relies on an energy method based on testing with $(-\Delta)^\nu \utt$ for $\nu \geq 0$. In this case, the leading term $\taua\frakKone * \uttt$ will invoke assumption  \eqref{Afour_GFEIII}, whereas the combination of the other two nonlocal terms, $- \taua c^2 \frakKone* \Delta \ut - \delta \frakKtwo*\Delta \utt$ will invoke \eqref{Afive_GFEIII}. Note that we assume \eqref{Afive_GFEIII} to hold also for $\tau=0$, thereby having the coercivity of the kernel $\frakKtwo$ alone as well. 
\subsection*{How to verify the coercivity assumptions} 
Assumption \eqref{Afour_GFEIII} clearly holds for $\frakKone=\delta_0$. For kernels in $L^p(0,T)$, sufficient conditions under which \eqref{Afour_GFEIII} holds for $y \in W^{1,1}(0,t')$ can be found, for example, in Lemma B.1 in~\cite{kaltenbacher2021determining}; see also~\cite[Lemma 3.1]{oparnica2020well}.  These are as follows:
\begin{equation} \label{assumptions_LemmaB1}
\begin{aligned}
&\frakKone \in L^p(0,T) \text{ for some } p>1, \quad  (\forall \varepsilon>0)\quad  \frakKone \in W^{1,1}(\varepsilon, T),\\
&\frakKone \geq 0 \ \text{ a.e., } \quad \frakKone'\vert_{[\varepsilon, T]} \leq 0 \ \text{ a.e}.
\end{aligned}
\end{equation}
As observed in~\cite{kaltenbacher2022limiting}, the condition on the sign of the kernel can be relaxed to $\frakKone\geq 0$ in a neighborhood of $0$. By a density argument, \eqref{Afour_GFEIII} then also holds for any $y \in X^2_{\frakKone}(0,t')$ under assumptions \eqref{assumptions_LemmaB1}. \\
\indent Assumption \eqref{Afive_GFEIII} is fulfilled for $\frakKone=\delta_0$ and $\frakKtwo=1$; that is, for the third-order \ref{general_JMGT_equation} equation. 
\begin{center}
	\begin{table}[h]
		\centering
		\begin{tabular}[h]{|l||l|}
			\hline
			&\\[-2ex]
			Flux law &$(\Re\mathcal{F}\overline{(\tau^a c^2 1*\mathfrak{K}_1+ \ttbdelta \mathfrak{K}_2)}^\infty)(\imath\omega)$
			\\
			\hline\hline   
			GFE I&$\tau^a c^2 \cos((2-\alpha)\pi/2)\omega^{\alpha-2}+\ttbdelta \cos(\alpha\pi/2)\omega^{-\alpha}$\\[2mm]
			GFE III& $\ttbdelta \cos(\alpha\pi/2)\omega^{-\alpha}$\\[2mm]
			GFE& $\tau^a c^2 \cos((2-\alpha)\pi/2)\omega^{\alpha-2}$\\
			\hline
		\end{tabular}
		\vspace*{4mm}
		\caption{\small Real part of Fourier transforms of the Compte--Metzler laws} \label{Table:Fourier_transf_combined_kernel}
	\end{table}	
\end{center}
\vspace*{-4mm}
For other kernels, one can verify \eqref{Afive_GFEIII} by applying the Fourier analysis technique from \cite[Lemma 2.3]{Eggermont1987} to the combined kernel \[\tilde{\mathcal{K}}:=\taua c^2 1*\frakKone + \ttbdelta\frakKtwo.\] 
see Table~\ref{Table:Fourier_transf_combined_kernel}, where $\calF$ denotes the Fourier transform, $\Re$ the real part, and $\overline{f}^t$ the extension of a function $f$ by zero outside $\mathbb{R}\setminus(0,t)$. Among the fractional Compte--Metzler laws, the combined uniform lower bound \eqref{Afive_GFEIII} only holds for the kernels in the GFE III law. Indeed, in case of the equation obtained using the third Compte--Metzler law:
\begin{equation} 
\begin{aligned}
\begin{multlined}[t]\tau  \ppsi_{ttt}+\aaa \ppsi_{tt}
-c^2\bbb\Delta \ppsi
-   \tau c^2  \Delta \ppsi_t -\ttbdelta \frakKtwo* \Delta \ppsi_{tt}+\calN=0 \end{multlined}
\end{aligned}
\end{equation}
we have $\frakKone=\delta_0$ and $a=1$. Then for $\delta>0$, condition \eqref{Afive_GFEIII} holds (uniformly with respect to $\alpha\in[0,1]$) due to the estimate 
	\begin{equation} \label{coercivity_GFEIII}
		\begin{aligned}
	\int_0^{t'} (\taua c^2 y +\ttbdelta g_\alpha\Lconv y')(t) y'(t)\dt
	 \geq&\, 
	\frac12 \taua c^2 (y(t')^2-y(0)^2)+\ttbdelta\cos(\alpha\pi/2) \|y'\|_{H^{-\alpha/2}(0,t')}^2\\
	\geq&\, \ttbdelta\cos(\alpha\pi/2) \|y'\|_{H^{-\alpha/2}(0,t')}^2
	\end{aligned}
	\end{equation}
and Lemma~\ref{Lemma_Hneg_alphahalf}, which provides a lower bound for $\|y'\|_{H^{-\alpha/2}(0,t')}^2$.
\subsection{Well-posedness of a linearized Westervelt--Blackstock problem} \label{Sec:Lower_order_West_type}
Under assumptions \eqref{reg_kernels} and \eqref{Athree_GFEIII}--\eqref{Afive_GFEIII} on the two kernels, we next discuss the well-posedness of \eqref{general_eq_ppsi} with Westervelt--Blackstock nonlinearities, uniformly in $\tau$. Recall that this means we consider equation 
\begin{equation} \tag{\ref{general_eq_ppsi}}
\begin{aligned}
\begin{multlined}[t]
\taua \frakKone\Lconv \ppsittt +\aaa\ppsitt
-c^2\bbb\Delta \ppsi 
- \taua c^2 \frakKone\Lconv \Delta \ppsit	-	\ttbdelta  \frakKtwo\Lconv \Delta \ppsitt=f,
\end{multlined}
\end{aligned}
\end{equation}
where we have in mind 
\begin{itemize}
	\item $\aaa=1+2{k}_1\ppsi$, \ $\bbb=1-2k_2\ppsi$, and $f= -k_3 \ppsi_t^2$
\end{itemize}
and that, among the discussed equations, the assumptions on the kernels in this section are verified by the third-order \ref{general_JMGT_equation} and \ref{fJMGTIII} equations. \\
\indent As announced, we first analyze a linearization 
where $\aaa=\aaa(x,t) $ and $\bbb=\bbb(x,t)$. We assume that these coefficients are sufficiently regular in the following sense:
\begin{equation} \label{reg_assumptions_aaa_bbb_lowersetting}
\begin{aligned}
\aaa \in&\, \left \{\aaa \in C([0,T]; L^\infty(\Om)): \nabla \aaa \in L^\infty(0,T;L^4(\Omega))\right \}, \\
\bbb \in&\,  W^{1,1}(0,T; L^\infty(\Omega)) \hookrightarrow C([0,T]; L^\infty(\Om)).
\end{aligned}
\end{equation}
Furthermore, the coefficient $\aaa$ should not degenerate: we assume that there exist $\ulaaa$, $\olaaa>0$, independent of $\tau$, such that
\begin{equation} \label{nondeg_aaa_GFEIII}
\ulaaa < \aaa(x,t) < \olaaa \quad \text{a.e.\ in } \Omega \times (0,T).
\end{equation}
To state the well-posedness result, we introduce the following space:
\begin{equation}
\begin{aligned}
\calXBW= \{\ppsi \in L^\infty(0,T; \Honetwo):&\, \ppsit \in L^\infty(0,T; \Honetwo), \\ &\, \ppsitt \in L^2(0,T; H_0^1(\Om)) \}.
\end{aligned}
\end{equation}
Note that if $u \in \calXBW$, we have the following (weak) continuity in time
\begin{equation}
\begin{aligned}
\ppsi \in \XWest \implies \ppsi \in C([0,T]; \Honetwo), \ \ppsit \in C_{w}(0,T; \Honetwo);
\end{aligned}
\end{equation}
see~\cite[Ch.\ 2, Lemma 3.3]{temam2012infinite}.
\begin{proposition}\label{Prop:Wellp_GFEIII_BWest_lin}
	Let $T>0$ and $\tau \in (0, \bar{\tau}]$. Let assumptions \eqref{reg_kernels}, \eqref{Athree_GFEIII}--\eqref{Afive_GFEIII} on the kernels hold. Let the coefficients $\aaa$ and $\bbb$ satisfy regularity assumption \eqref{reg_assumptions_aaa_bbb_lowersetting}. Assume that $\aaa$ does not degenerate so that \eqref{nondeg_aaa_GFEIII} holds. Let $f \in W^{1,1}(0,T; L^2(\Omega))$ and
	\begin{equation}
	\begin{aligned}
	(\ppsi, \ppsit, \ppsitt)\vert_{t=0} =(\ppsi_0, \ppsi_1, \ppsi_2) \in \Honetwo \times \Honetwo \times \Honezero.
	\end{aligned}
	\end{equation}
	Then there exists $m>0$, independent of $\tau$, such that  if
	\begin{equation}
	\begin{aligned}
	\|\nabla \aaa\|_{L^\infty(L^4)} \leq m,
	\end{aligned}
	\end{equation}
	then there is a solution 
	\[ \ppsi \in \XWest, \quad \taua \frakKone* \ppsittt \in L^2(0,T; \Hneg) \] of the initial boundary-value problem 
	\begin{equation} \label{IBVP_Westtype_lin}
	\left \{	\begin{aligned}
	&\begin{multlined}[t]	\taua  \frakKone * \ppsittt+\aaa(x,t) \ppsitt
	-c^2 \bbb(x,t) \Delta \ppsi
	-   \taua c^2 \frakKone* \Delta \ppsit	\\ \hspace*{6cm}-\ttbdelta \frakKtwo* \Delta \ppsitt=f(x,t)\ \textup{ in } \Omega \times (0,T), \end{multlined}\\
	&\ppsi\vert_{\partial \Omega}=0, \\
	&	(\ppsi, \ppsit, \ppsitt)\vert_{t=0} =(\ppsi_0, \ppsi_1, \ppsi_2).
	\end{aligned} \right. 
	\end{equation}
	The solution satisfies the following estimate: 
	\begin{equation}
	\begin{aligned}
	\|\ppsi\|_{\XWest}^2 \lesssim_T \|\ppsi_0\|^2_{H^2}+\|\ppsi_1\|^2_{H^2} + \taua \|\ppsi_2\|^2_{H^1}+ \|f\|_{W^{1,1}(L^2)}^2,
	\end{aligned}
	\end{equation}
	where the hidden constant does not depend on $\tau$. If additionally $\bbb \in W^{1,1}(0,T; W^{1,4}(\Omega))$, the solution is unique.
\end{proposition}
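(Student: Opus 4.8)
\emph{Approach.} The plan is to combine a Faedo--Galerkin approximation in space with a $\tau$-uniform energy estimate obtained by testing with $-\Delta\ppsi_{tt}$, then to pass to the limit by weak compactness (invoking Lemma~\ref{Lemma:Caputo_seq_compact} for the singular term $\taua\frakKone\Lconv\ppsi_{ttt}$), and finally to prove uniqueness by a separate energy argument on the difference of two solutions. So I would first set up the semidiscretization in the Dirichlet eigenbasis $\{w_j\}$ of $-\Delta$, writing $\ppsi^n=\sum_{j=1}^n c^n_j(t)w_j$ for the solution of the projected equation with initial data the $L^2(\Omega)$-projections of $(\ppsi_0,\ppsi_1,\ppsi_2)$; by \eqref{reg_kernels} this is a linear (integer- or fractional-order) Volterra system for $(c^n_j)$ whose global solvability is provided by Appendix~\ref{Appendix:Galerkin}, and $(-\Delta)^\nu\ppsi^n_{tt}\in\mathrm{span}\{w_1,\dots,w_n\}$ is then an admissible test function.

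\emph{A priori estimate.} Testing the discrete equation with $-\Delta\ppsi^n_{tt}$ and integrating over $\Omega\times(0,t')$, after integration by parts in space the leading term becomes $\taua\int_0^{t'}\!\int_\Omega(\frakKone\Lconv\nabla\ppsi^n_{ttt})\cdot\nabla\ppsi^n_{tt}$, which is bounded below by $-C\taua\|\nabla\ppsi_2\|_{L^2}^2$ by \eqref{Afour_GFEIII} applied componentwise with $y=\nabla\ppsi^n_{tt}$; the remaining two nonlocal terms combine into $\int_0^{t'}\!\int_\Omega(\taua c^2\frakKone\Lconv\Delta\ppsi^n_t+\ttbdelta\frakKtwo\Lconv\Delta\ppsi^n_{tt})\Delta\ppsi^n_{tt}$, which by \eqref{Afive_GFEIII} with $y=\Delta\ppsi^n_t$ dominates $\underline{c}\|\Delta\ppsi^n_t\|_{L^\infty(0,t';L^2)}^2-\overline{C}\|\Delta\ppsi_1\|_{L^2}^2$ --- this is the mechanism producing $\tau$-uniform $L^\infty(H^2)$ control of $\ppsi^n_t$. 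The term $\aaa\ppsi^n_{tt}$ yields $\int_0^{t'}\!\int_\Omega\aaa|\nabla\ppsi^n_{tt}|^2+\int_0^{t'}\!\int_\Omega\ppsi^n_{tt}\,\nabla\aaa\cdot\nabla\ppsi^n_{tt}$, the first piece being $\ge\ulaaa\|\nabla\ppsi^n_{tt}\|_{L^2(L^2)}^2$ by \eqref{nondeg_aaa_GFEIII} and the second bounded by $\CHone\|\nabla\aaa\|_{L^\infty(L^4)}\|\nabla\ppsi^n_{tt}\|_{L^2(L^2)}^2$ (using $H^1_0\hookrightarrow L^4$, Poincar\'e, and \eqref{reg_assumptions_aaa_bbb_lowersetting}), hence absorbed once $m<\ulaaa/\CHone$; the stiffness term $-c^2\bbb\Delta\ppsi^n$ and the source $f$ are each integrated by parts in time and controlled using $\bbb\in W^{1,1}(L^\infty)$ and $f\in W^{1,1}(L^2)\hookrightarrow C([0,T];L^2)$, the boundary contributions being handled with Young's inequality and, at $t=0$, by the data. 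Using $\Delta\ppsi^n(t)=\Delta\ppsi^n(0)+\int_0^t\Delta\ppsi^n_t$ and Gr\"onwall's inequality (on a short interval, then concatenating) I would then obtain
\[
\|\ppsi^n\|_{\XWest}^2\ \lesssim_T\ \|\ppsi_0\|_{H^2}^2+\|\ppsi_1\|_{H^2}^2+\taua\|\ppsi_2\|_{H^1}^2+\|f\|_{W^{1,1}(L^2)}^2,
\]
with a constant independent of $\tau$ and $n$, Poincar\'e converting the gradient bound on $\ppsi^n_{tt}$ into the $L^2(H^1)$ bound and time integration of $\Delta\ppsi^n_t$ giving the $L^\infty(H^2)$ bounds on $\ppsi^n$ and $\ppsi^n_t$.

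\emph{Limit and uniqueness.} From the uniform bound I would extract a subsequence with $\ppsi^n\overset{*}{\rightharpoonup}\ppsi$ in $L^\infty(\Honetwo)$, $\ppsi^n_t\overset{*}{\rightharpoonup}\ppsi_t$ in $L^\infty(\Honetwo)$, $\ppsi^n_{tt}\rightharpoonup\ppsi_{tt}$ in $L^2(\Honezero)$; solving the discrete equation for $\taua\frakKone\Lconv\ppsi^n_{ttt}$ shows it is bounded in $L^2(0,T;\Hneg)$, and applying the weak sequential compactness of Lemma~\ref{Lemma:Caputo_seq_compact} to $\ppsi^n_{tt}$ in the (Banach-valued analogue of the) space $X^2_{\frakKone}$ --- which is exactly where \eqref{Athree_GFEIII} enters, supplying the resolvent $\tfrakKone\in L^2(0,T)$ --- identifies its limit as $\taua\frakKone\Lconv\ppsi_{ttt}\in L^2(0,T;\Hneg)$, after which the linear equation passes to the limit. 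The embedding $\XWest\hookrightarrow C([0,T];\Honetwo)$ with $\ppsi_t\in C_w([0,T];\Honetwo)$ yields $\ppsi|_{t=0}=\ppsi_0$, $\ppsi_t|_{t=0}=\ppsi_1$, while $\ppsi_{tt}|_{t=0}=\ppsi_2$ is attained in the weak sense afforded by the temporal regularity carried by $\taua\frakKone\Lconv\ppsi_{ttt}$ together with Lemma~\ref{Lemma:Caputo_seq_compact}; weak lower semicontinuity transfers the estimate to $\ppsi$. For uniqueness under the extra hypothesis $\bbb\in W^{1,1}(0,T;W^{1,4}(\Omega))$, by linearity the difference $v$ of two solutions has vanishing data and $f=0$; since at the regularity level of $\XWest$ the term $\taua\frakKone\Lconv v_{ttt}$ lies only in $L^2(\Hneg)$, the natural test function for $v$ is $v_{tt}$ (not $-\Delta v_{tt}$), so $-c^2\bbb\Delta v$ must be integrated by parts both in time and in space, producing in particular $c^2\int_0^{t'}\!\int_\Omega v_t\,\nabla\bbb_t\cdot\nabla v$, bounded by $\|\nabla\bbb_t\|_{L^1(L^4)}\|\nabla v_t\|_{L^\infty(L^2)}\|\nabla v\|_{L^\infty(L^2)}$ --- precisely the term forcing $\bbb_t\in L^1(W^{1,4})$. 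With \eqref{Afour_GFEIII} ($y=v_{tt}$) and \eqref{Afive_GFEIII} ($y=\nabla v_t$) furnishing the coercive lower bounds and $\nabla v(t)=\int_0^t\nabla v_t$, Gr\"onwall's inequality then forces $v\equiv 0$.

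\emph{Main obstacle.} The hard part is the $\tau$-uniformity of the a priori estimate: the two $\tau$-singular contributions must be kept under control without ever dividing by $\tau$, which is exactly the purpose of \eqref{Afour_GFEIII} (a favourable sign up to the data) and of \eqref{Afive_GFEIII}, in which the $\taua$-weighted convolution is made to cooperate with the $\ttbdelta\frakKtwo$-term to yield a $\tau$-independent coercive bound, while the bad cross term coming from $\nabla\aaa$ is neutralised only by the smallness assumption $m<\ulaaa/\CHone$. A second, more technical, difficulty is that the test function $-\Delta\ppsi_{tt}$ is not admissible at the regularity level of the limit solution --- where $\frakKone\Lconv\ppsi_{ttt}$ is merely $L^2(\Hneg)$-valued --- so the energy identity has to be produced within the Galerkin scheme and then survive the weak limit, which is where Lemma~\ref{Lemma:Caputo_seq_compact}, and with it assumption \eqref{Athree_GFEIII}, becomes indispensable.
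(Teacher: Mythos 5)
Your proposal is correct and follows essentially the same route as the paper: Faedo--Galerkin semidiscretization, testing with $-\Delta\ppsi^{(n)}_{tt}$ and invoking \eqref{Afour_GFEIII} with $y=\nabla\ppsi^{(n)}_{tt}$ and \eqref{Afive_GFEIII} with $y=\Delta\ppsi^{(n)}_t$, absorbing the $\nabla\aaa$ cross term by smallness, Gr\"onwall, bootstrapping the $L^2(\Hneg)$ bound on $\taua\frakKone\Lconv\ppsi^{(n)}_{ttt}$ from the PDE, passing to the limit via Lemma~\ref{Lemma:Caputo_seq_compact}, and proving uniqueness by testing the homogeneous problem with $\ppsitt$ under the extra $W^{1,1}(W^{1,4})$ regularity of $\bbb$. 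The only point you leave slightly under-specified is the attainment of $\ppsitt(0)=\ppsi_2$, which the paper settles by first deriving the $\tau^a$-weighted $L^\infty(\Hneg)$ bound on $\ppsi^{(n)}_{tt}$ from the resolvent $\tfrakKone\in L^2$ and then comparing the limiting weak-in-time formulation with its Galerkin counterpart.
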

\begin{proof}
	We conduct the proof by using a standard Faedo--Galerkin procedure where we first construct an approximate solution $\ppsin \in W^{2, \infty}(0,T; V_n)$ with $V_n$ being a finite-dimensional subspace of $\Honetwo$; the details can be found in Appendix~\ref{Appendix:Galerkin}. \\
	\indent The next step in the proof is to obtain a bound that is uniform in $n$ (and also $\tau$, having in mind the later singular limit analysis). By testing the semi-discrete problem with $-\D \ppsintt$ and using the assumptions we have made on the kernels, we find that
	\begin{equation}
	\begin{aligned}
	&		\int_0^t \nLtwo{\sqrt{\aaa}\nabla \ppsintt}^2 + \underline{c} \|\Delta \ppsint\|^2_{L^\infty(L^2)}\\
	\lesssim &\,  \begin{multlined}[t]
	\int_0^t\left\{
	-(\ppsintt \nabla \aaa , \nabla \ppsintt )_{L^2}-	c^2(\bbb\D \ppsi, \D \ppsintt )_{L^2}
	+({f},-\D \ppsintt )_{L^2}
	\right\}\ds+ \taua\|\nabla \ppsin_2\|^2_{L^2}\\ +\|\Delta \ppsin_1 \|^2_{L^2}. \end{multlined}
	\end{aligned}
	\end{equation}
	Integrating by parts in the right-hand side terms yields
	\begin{equation}
	\begin{aligned}
	&		\int_0^t \nLtwo{\sqrt{\aaa}\nabla \ppsintt}^2 + \underline{c} \|\Delta \ppsint\|^2_{L^\infty(L^2)}\\
	\lesssim 	&\,  \begin{multlined}[t] - \intt (\ppsintt \nabla \aaa , \nabla \ppsintt)_{L^2} \ds+c^2\left(-	\bbb \D \ppsin(s), \D \ppsint (s)\right)_{L^2}\Big \vert_0^t +({f}(s),-\D \ppsint(s))_{L^2} \Big \vert_0^t \\	+\int_0^t\left\{
	c^2(\bbb \D \ppsint+\bbb_t \D \ppsin, \D \ppsint)_{L^2}
	+({f}_t,\D \ppsint)_{L^2}
	\right\}\ds\\+ \taua \|\nabla \ppsin_2\|^2_{L^2}+\|\Delta \ppsin_{1}\|^2_{L^2}. \end{multlined}
	\end{aligned}
	\end{equation}
	From here using H\"older's inequality and the embedding $H^1(\Omega) \hookrightarrow L^4(\Omega)$, we have
	\begin{equation} \label{lowerorder_est_1}
	\begin{aligned}
	&
	\ulaaa\int_0^t \nLtwo{\nabla \ppsintt}^2 + \underline{c} \|\Delta \ppsint\|^2_{L^\infty(L^2)}\\
	\lesssim	&\,  \begin{multlined}[t]
	\|\nabla \aaa\|_{L^\infty(L^4)}\nLtwoLtwo{\nabla \ppsintt}^2 + \olbbb^2\|\Delta \ppsin\|^2_{L^\infty(L^2)}+\varepsilon\|\Delta \ppsint\|^2_{L^\infty(L^2)}\\+\|\bbb(0)\|_{L^\infty}^2\|\Delta \ppsin_0\|_{L^2}^2+\|\Delta \ppsin_{1}\|^2_{L^2}+\|f\|^2_{W^{1,1}(L^2)}\\
	+\olbbb \|\D \ppsint\|^2_{L^2(L^2)}+\|\bbb_t\|_{L^1(L^\infty)}^2\|\Delta \ppsin\|^2_{L^\infty(L^2)}+ \taua \|\nabla \ppsin_2\|^2_{L^2} . \end{multlined}
	\end{aligned}
	\end{equation}
	We can further bound $\|\Delta \ppsin\|_{L^\infty(L^2)}$ by using the estimate
	\[
	\|\Delta \ppsin\|_{L^\infty(L^2)} \leq \sqrt{T} \|\Delta \ppsint\|_{L^2(L^2)} +\|\Delta \ppsin_0\|_{L^2}.
	\]
	If $C$ is a hidden constant in \eqref{lowerorder_est_1}, then provided we take $\aaa$ small enough so that
	\begin{equation}
	\begin{aligned}
	\|\nabla \aaa\|_{L^\infty(L^4)} \leq \frac12 C \ulaaa,
	\end{aligned}
	\end{equation}
	and also choose $\varepsilon>0$ small enough, we have via Gronwall's inequality
	\begin{equation} \label{lowerorder_est_2}
	\begin{aligned}
	&
	\int_0^t \nLtwo{\nabla \ppsintt }^2 +  \|\Delta \ppsint\|^2_{L^\infty(L^2)}+ \|\Delta \ppsin\|^2_{L^\infty(L^2)}\\
	\lesssim_T	&\,  \begin{multlined}[t]
	\|\Delta \ppsin_0\|_{L^2}^2+ \|\Delta \ppsin_{1}\|^2_{L^2}+ \taua  \|\nabla \ppsin_2\|^2_{L^2}+\|f\|^2_{W^{1,1}(L^2)}, \end{multlined}
	\end{aligned}
	\end{equation}
	where the hidden constant has the form
	\begin{equation} \label{const_Gronwall}
	C= C_1 \exp \left(C_2T(1+\|\bbb_t\|^2_{L^1(L^\infty)})\right).
	\end{equation}
	Additionally, from the PDE and Young's convolution inequality, we have
	\begin{equation}\label{bootstrap_Hneg}
	\begin{aligned}
	&\taua\| \frakKone * \ppsinttt\|_{L^2(H^{-1})} \\
	\lesssim&\, \begin{multlined}[t]\olaaa \|\ppsintt\|_{L^2(L^2)}+ \olbbb \|\Delta \ppsin \|_{L^2(L^2)}+\taua \|\frakKone\|_{{\mathcal{M}}}\|\D \ppsint\|_{L^2(L^2)}\\+\|\frakKtwo\|_{{\mathcal{M}}}\|\nabla \ppsintt\|_{L^2(L^2)} + \|f\|_{L^2(H^{-1})},
	\end{multlined}
	\end{aligned}
	\end{equation}
	which, taken together with \eqref{lowerorder_est_2}, provides us with a uniform bound on $\taua\| \frakKone * \ppsinttt\|_{L^2(H^{-1})}$ as well. {Since we have assumed that $\tfrakKone \in L^2(0,T)$}, then from the  bound on 
	\begin{equation}
	\begin{aligned}
	\taua \frakKone* \ppsinttt := \tilde{f}^{(n)}
	\end{aligned}
	\end{equation}
	in $L^2(0,T; H^{-1}(\Omega))$, we also obtain a uniform bound on
	\begin{equation} \label{Linf_bound_untt}
	\begin{aligned}
	\taua\|\ppsintt\|_{L^\infty(H^{-1})}=&\, \taua\|\ppsin_2 + \tfrakKone*\tilde{f}^{(n)}\|_{L^\infty(H^{-1})} \\
	\lesssim&\,  \|\ppsin_2\|_{H^{-1}} + \|\tfrakKone\|_{L^2}\|\tilde{f}^{(n)}\|_{L^2(H^{-1})}.
	\end{aligned}
	\end{equation}
	~\\
From the above analysis, we conclude that there is a subsequence (not relabeled), such that 
	\begin{equation} \label{weak_limits_1}
	\begin{alignedat}{4} 
	\ppsin &\relbar\joinrel\rightharpoonup \ppsi&& \quad \text{ weakly-$\star$}  &&\text{ in } &&L^\infty(0,T; \Honetwo),  \\
	\ppsint &\relbar\joinrel\rightharpoonup \ppsit && \quad \text{ weakly-$\star$}  &&\text{ in } &&L^\infty(0,T; \Honetwo),\\
	\ppsintt &\relbar\joinrel\rightharpoonup \ppsitt && \quad \text{ weakly}  &&\text{ in } &&L^2(0,T; \Honezero),
	\end{alignedat} 
	\end{equation} 
	as $n\rightarrow \infty$. From \eqref{weak_limits_1}, by \cite[Theorem 3.1.1]{zheng2004nonlinear}, there is a subsequence (again not relabeled), such that 
	\begin{equation} \label{weak_limits_1_cor}
	\begin{alignedat}{4} 
	\ppsin &\longrightarrow \ppsi && \quad \text{ strongly}  &&\text{ in } &&L^2(0,T; \Honezero),  \\
	\ppsint &\longrightarrow \ppsit && \quad \text{ strongly}  &&\text{ in } &&L^2(0,T; \Honezero).
	\end{alignedat} 
	\end{equation} 
	Furthermore, by Young's convolution inequality 
	\begin{equation}
	\begin{aligned}
	&\|\frakKone* \Delta \ppsint\|_{L^\infty(L^2)} \leq \|\frakKone\|_{{\mathcal{M}}}\|\Delta \ppsint\|_{L^\infty(L^2)} , \\
	& \|\frakKtwo* \Delta \ppsintt\|_{L^2(H^{-1})} \leq \|\frakKtwo\|_{{\mathcal{M}}}\|\Delta \ppsintt\|_{L^2(H^{-1})},
	\end{aligned}
	\end{equation}
	so we conclude that (up to a subsequence) 
	\begin{equation} \label{weak_limits_2}
	\begin{alignedat}{4} 
	\frakKone* \Delta \ppsint &\relbar\joinrel\rightharpoonup \frakKone* \Delta \ppsit\ && \quad\text{ weakly-$\star$}   &&\text{ in } &&L^\infty(0,T; L^2(\Omega)), \\
	\frakKtwo* \Delta \ppsintt &\relbar\joinrel\rightharpoonup \frakKtwo* \Delta \ppsitt && \quad \text{ weakly}  &&\text{ in } &&L^2(0,T; H^{-1}(\Omega)).
	\end{alignedat} 
	\end{equation}
	By \eqref{bootstrap_Hneg} and Lemma~\ref{Lemma:Caputo_seq_compact}, we also have
	\begin{equation} \label{weak_limits_4}
	\begin{alignedat}{4} 
	\taua \frakKone * \ppsinttt&\relbar\joinrel\rightharpoonup 	\taua \frakKone * \ppsittt && \quad\text{ weakly}    &&\text{ in } &&L^2(0,T; \Hneg).
	\end{alignedat} 
	\end{equation}
	This allows us to pass to the limit in the semi-discrete equation. From \eqref{weak_limits_1_cor}, and uniqueness of limits, we conclude that $(\ppsi, \ppsit) \vert_{t=0}=(\ppsi_0, \ppsi_1)$. It remains to interpret how $\ppsi_2$ is attained. Following~\cite[Ch.\ 7]{evans2010partial}, let $v \in C^1([0,T]; \Honezero)$ with $v(T)=v_t(T)=0$. We have
	\begin{equation}\label{Galerkin_weak}
	\begin{aligned}
	\begin{multlined}[t]  	-\taua \intTO \frakKone* \ppsitt v_t \dxs - \taua \intT (\frakKone \ppsitt(0), v)_{L^2} \ds \\
	+\intT ( \aaa \ppsitt-c^2  \bbb \Delta u 
	-   {\tau^a} c^2 \frakKone* \Delta \ppsit , v)_{L^2}\ds 
	+ \intT \ttbdelta  ( \frakKtwo* \nabla \ppsitt, \nabla v)_{L^2}\ds= \intT (f, v)\ds.
	\end{multlined}
	\end{aligned}
	\end{equation}
	For the Galerkin approximation, we similarly have
	\begin{equation} \label{Galerkin_weak_un}
	\begin{aligned}
	\begin{multlined}[t]  -	\taua \intTO \frakKone* \ppsintt v_t \dxs - \taua \intT (\frakKone \ppsintt(0), v)_{L^2} \ds \\
	+\intT ( \aaa \ppsintt-c^2  \bbb \Delta \ppsin
	-   {\tau^a} c^2 \frakKone* \Delta \ppsint , v)_{L^2}\ds 
	+ \intT \ttbdelta  ( \frakKtwo* \nabla \ppsintt, \nabla v)_{L^2}\ds\\= \intT (f, v)\ds.
	\end{multlined}
	\end{aligned}
	\end{equation} 
	Note that with $L^\infty$ regularity in time for $\ppsintt$ obtained in \eqref{Linf_bound_untt}, we have $\taua(\frakKone* \ppsintt)(0) =0$.
	We also have that $\taua(\frakKone* \ppsitt)(0)=0$ since due to \eqref{weak_limits_4} and \eqref{Linf_bound_untt}
	\begin{equation} \label{Linf_bound_utt}
	\begin{aligned}
	\taua\|\ppsitt\|_{L^\infty(H^{-1})} \leq &\, \taua \liminf_{n \rightarrow \infty}	\|\ppsintt\|_{L^\infty(H^{-1})} \leq C.
	\end{aligned}
	\end{equation}
	The uniform bound on $\ppsintt$ taken together with \eqref{weak_limits_1}--\eqref{weak_limits_4}, allows us to pass to the limit (in possibly a subsequence) in \eqref{Galerkin_weak_un}. Comparing the resulting identities gives
	\[
	\taua \intT (\frakKone \ppsitt(0), v)_{L^2} \ds = \taua \intT (\frakKone \ppsi_2, v)_{L^2} \ds,
	\]
	from which we conclude (since $\frakKone\neq0$) that $\ppsitt(0)=\ppsi_2$.  \\[2mm]
	\noindent \emph{Uniqueness.} Uniqueness of solutions in $\calXBW$ follows by proving that the only solution of the homogeneous problem
	with $f=0$ and zero data is $\ppsi=0$.  Since $- \Delta \ppsitt$ is not a valid test function in this setting, we test this problem with $\ppsitt \in L^2(0,T; \Honezero)$ instead. Similarly to before, and using the additional smoothness assumption on $\bbb$, we have
	\begin{equation} \label{est_uniqeness_lowerorder}
	\begin{aligned}
	&	\int_0^t \nLtwo{\sqrt{\aaa} \ppsitt}^2 + \underline{c} \|\nabla \ppsit\|^2_{L^\infty(L^2)}\\
	\leq&\, 	\int_0^t\left\{
	-c^2(\bbb\nabla \ppsi,  \nabla \ppsitt)_{L^2} -c^2(\ppsi \nabla \bbb,  \nabla \ppsitt)_{L^2}
	\right\}\ds\\
	=&\, \begin{multlined}[t] 
	-c^2(\bbb(t)\nabla \ppsi(t),  \nabla \ppsit(t))_{L^2} -c^2(\ppsi(t) \nabla \bbb(t),  \nabla \ppsit(t))_{L^2}
	\\ -\int_0^t\left\{
	-c^2(\bbb_t\nabla \ppsi+\bbb\nabla \ppsit,  \nabla \ppsit)_{L^2} -c^2(\ppsit \nabla \bbb+\ppsi \nabla \bbb_t,  \nabla \ppsit)_{L^2}\right\}\ds. \end{multlined}
	\end{aligned}
	\end{equation}
	We can further bound the right-hand side using H\"older's inequality:
	\begin{equation}
	\begin{aligned}
	&\left| 	-c^2(\bbb(t)\nabla \ppsi(t),  \nabla \ppsit(t))_{L^2} -c^2(\ppsi(t) \nabla \bbb(t),  \nabla \ppsit(t))_{L^2} \right| \\
	\lesssim&\, \olbbb \|\nabla \ppsi(t)\|_{L^2} \|\nabla \ppsit(t)\|_{L^2}+\|\ppsi(t)\|_{L^4}\|\nabla \bbb(t)\|_{L^4}\|\nabla \ppsit(t)\|_{L^2}
	\end{aligned}
	\end{equation}
	a.e.\ in time. Similarly,
	\begin{equation}
	\begin{aligned}
	&\left| -\int_0^t\left\{
	-c^2(\bbb_t\nabla \ppsi+\bbb\nabla \ppsit,  \nabla \ppsit)_{L^2} -c^2(\ppsit \nabla \bbb+\ppsi \nabla \bbb_t,  \nabla \ppsit)_{L^2}\right\}\ds \right| \\
	\lesssim&\,\begin{multlined}[t] \|\bbb_t\|_{L^1(L^\infty)}\|\nabla \ppsi\|_{L^\infty(L^2)}\|\nabla \ppsit\|_{L^\infty(L^2)}+ \olbbb \|\nabla \ppsit\|^2_{L^2(L^2)} \\
	+\|\nabla \bbb\|_{L^2(L^4)}\|\ppsit\|_{L^2(L^4)}\|\nabla \ppsit\|_{L^\infty(L^2)}+ \|\nabla \bbb_t\|_{L^1(L^4)}\|\ppsi\|_{L^\infty(L^4)}\|\nabla \ppsit\|_{L^\infty(L^2)}.
	\end{multlined}
	\end{aligned}
	\end{equation}
	Employing these bounds in \eqref{est_uniqeness_lowerorder} leads to
	\[
	\int_0^t \nLtwo{\ppsitt}^2 +  \|\nabla \ppsit\|^2_{L^\infty(L^2)} \leq 0,
	\]
	from which (combined with the zero data) it follows that $\ppsi=0$. This concludes the proof.
\end{proof}
\subsection{Uniform well-posedness with Westervelt--Blackstock nonlinearities}
To treat equations \eqref{general_eq_ppsi} with Westervelt--Blackstock nonlinearities under assumptions \eqref{reg_kernels} and \eqref{Athree_GFEIII}--\eqref{Afive_GFEIII} on the two kernels, we next set up a fixed-point mapping $\calT: \calBBW \ni \ppsi^* \mapsto \ppsi$, 
where $\ppsi$ solves \eqref{IBVP_Westtype_lin} with
\begin{equation}
\begin{aligned}
\aaa(\ppsi^*) =\,1+2k_1 \ppsi^*, \quad \bbb(\ppsi^*) = 1-2k_2 \ppsi^*, \quad
f=\, -\calN(\ppsi^*_t)= -2k_3 (\ppsit^*)^2,
\end{aligned}
\end{equation}
and $\ppsi^*$ is taken from the ball
\begin{equation}
\begin{aligned}
\calBBW =\left \{ \ppsi^* \in \XWest: \right.&\, \, \|\ppsi^*\|_{\XWest} \leq R, \quad 
\left	(\ppsi^*, \ppsit^*, \ppsitt^*)\vert_{t=0} =(\ppsi_0, \ppsi_1, \ppsi_2) \}. \right.
\end{aligned}
\end{equation}
The radius $R>0$ is independent of $\tau$ and will be determined by the upcoming proof. 
\begin{theorem} \label{Thm:WellP_GFEIII_BWest}
	Let $T>0$ and $\tau \in (0, \bar{\tau}]$. Let $\delta>0$ and $k_{1,2,3} \in \R$. Let assumptions \eqref{reg_kernels}, \eqref{Athree_GFEIII}--\eqref{Afive_GFEIII} on the kernels hold. There exists a size of data $r=r(T)>0$, independent of $\tau$, such that if
	\begin{equation}
	\|\ppsi_0\|^2_{H^2}+\|\ppsi_1\|^2_{H^2} + \bar{\tau}^a \|\ppsi_2\|^2_{H^1} \leq {r}^2,
	\end{equation}
	then there is a unique solution $\ppsi \in \calBBW$ of 
	\begin{equation} \label{IBVP_Westtype_nonlin}
	\begin{aligned}
	\begin{multlined}[t]	\taua  \frakKone * \ppsittt+(1+2k_1 \ppsi) \ppsitt
	-c^2 (1-2k_2 \ppsi) \Delta \ppsi 
	-   \taua c^2 \frakKone* \Delta \ppsit- \ttbdelta  \frakKtwo* \Delta \ppsitt\\+2k_3 \ppsit^2=0 ,\end{multlined}
	\end{aligned}
	\end{equation}
with initial \eqref{IC} and boundary \eqref{BC} conditions.	The solution satisfies the following estimate: 
\begin{equation}
	\begin{aligned}
		\|\ppsi\|_{\XWest}^2 \lesssim_T \|\ppsi_0\|^2_{H^2}+\|\ppsi_1\|^2_{H^2} + \taua \|\ppsi_2\|^2_{H^1},
	\end{aligned}
\end{equation}
where the hidden constant does not depend on $\tau$.
\end{theorem}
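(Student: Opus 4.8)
The plan is to deploy the Banach fixed-point theorem (or Schauder, followed by a separate uniqueness argument) on the mapping $\calT: \calBBW \to \calBBW$, $\ppsi^* \mapsto \ppsi$, where $\ppsi$ solves the linearized problem \eqref{IBVP_Westtype_lin} with coefficients $\aaa(\ppsi^*) = 1+2k_1\ppsi^*$, $\bbb(\ppsi^*) = 1-2k_2\ppsi^*$, and right-hand side $f = -2k_3(\ppsit^*)^2$. First I would verify that $\calT$ is well-defined on $\calBBW$: for $\ppsi^* \in \calBBW$ with $\|\ppsi^*\|_{\XWest} \le R$, the coefficients satisfy the regularity hypotheses of Proposition~\ref{Prop:Wellp_GFEIII_BWest_lin}. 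Indeed, $\aaa = 1+2k_1\ppsi^* \in C([0,T]; L^\infty)$ with $\nabla\aaa = 2k_1\nabla\ppsi^* \in L^\infty(0,T; H^1) \hookrightarrow L^\infty(0,T; L^4)$, so that $\|\nabla\aaa\|_{L^\infty(L^4)} \lesssim R$; requiring $R$ (equivalently, via the linear estimate, the data size $r$) small enough ensures both the nondegeneracy \eqref{nondeg_aaa_GFEIII} and the smallness condition $\|\nabla\aaa\|_{L^\infty(L^4)} \le m$ of the proposition. Similarly $\bbb = 1-2k_2\ppsi^* \in W^{1,1}(0,T; W^{1,4})$ (using $\ppsit^* \in L^\infty(0,T; \Honetwo)$), which also secures uniqueness of the linear solve. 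The source satisfies $f = -2k_3(\ppsit^*)^2 \in W^{1,1}(0,T; L^2)$ with $\|f\|_{W^{1,1}(L^2)} \lesssim \|\ppsit^*\|_{L^\infty(L^4)}(\|\ppsit^*\|_{L^\infty(L^4)} + \|\ppsittt^*\|_{\ldots})$ --- here one uses $\ppsit^* \in L^\infty(H^2) \hookrightarrow L^\infty(L^\infty)$ and $\ppsitt^* \in L^2(H^1_0)$, giving $\|f\|_{W^{1,1}(L^2)} \lesssim R^2$.

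The self-mapping property then follows from the quantitative bound in Proposition~\ref{Prop:Wellp_GFEIII_BWest_lin}:
\[
\|\ppsi\|_{\XWest}^2 \lesssim_T \|\ppsi_0\|_{H^2}^2 + \|\ppsi_1\|_{H^2}^2 + \taua\|\ppsi_2\|_{H^1}^2 + \|f\|_{W^{1,1}(L^2)}^2 \lesssim_T r^2 + R^4.
\]
Choosing $R = R(T)$ so that $C(T) r^2 \le R^2/2$ and then $r$ small enough that $C(T) R^4 \le R^2/2$ (i.e., $R^2 \le 1/(2C(T))$ and $r^2 \le R^2/(2C(T))$) yields $\|\ppsi\|_{\XWest} \le R$, so $\calT(\calBBW) \subseteq \calBBW$; the initial conditions are attained exactly by construction of the linear solve. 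Contractivity is checked in a weaker norm: for $\ppsi_1^*, \ppsi_2^* \in \calBBW$, set $\ppsi_i = \calT\ppsi_i^*$ and subtract the two linearized equations. The difference $\bar\ppsi = \ppsi_1 - \ppsi_2$ solves a linearized equation of the same form with coefficients $\aaa(\ppsi_1^*)$, $\bbb(\ppsi_1^*)$ and right-hand side $F = 2k_1(\ppsi_2^* - \ppsi_1^*)\ppsi_{2,tt} + c^2 \cdot 2k_2(\ppsi_1^* - \ppsi_2^*)\Delta\ppsi_2 - 2k_3((\ppsi_{1,t}^*)^2 - (\ppsi_{2,t}^*)^2)$, so $F$ is controlled by $\|\ppsi_1^* - \ppsi_2^*\|$ in a suitable norm times $R$. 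Running the energy estimate of the proposition at the level of $\bar\ppsi$ --- but measuring $\bar\ppsi$ only in a lower-regularity norm such as $\|\bar\ppsi_t\|_{L^\infty(L^2)}^2 + \|\bar\ppsi_{tt}\|_{L^2(L^2)}^2$ (testing with $\bar\ppsi_{tt}$ as in the uniqueness part of the proposition, which is where the coercivity assumption \eqref{Afive_GFEIII} again enters) --- gives $\|\bar\ppsi\|_{\text{low}} \lesssim_T R \, \|\ppsi_1^* - \ppsi_2^*\|_{\text{low}}$; shrinking $r$ (hence $R$) further makes the constant $< 1$. Since $\calBBW$ is closed in this weaker topology (weak-$*$ limits preserve the ball constraints and the initial data), Banach's theorem applies and its unique fixed point $\ppsi = \ppsi^*$ is the desired solution of \eqref{IBVP_Westtype_nonlin}; the claimed energy estimate is just the linear estimate evaluated at $f = -2k_3\ppsit^2$, with the $R^4$ term absorbed into the left-hand side via the smallness of $R$.

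The main obstacle I anticipate is the \emph{consistency of norms between the self-mapping step and the contraction step}: the natural energy space $\XWest$ (with $\ppsi, \ppsit \in L^\infty(H^2)$) is too strong to close a contraction directly, because the difference of nonlinear terms like $(\ppsi_1^* - \ppsi_2^*)\Delta\ppsi_2$ cannot be bounded by $\|\ppsi_1^* - \ppsi_2^*\|_{\XWest}$ uniformly without losing a derivative, while the weaker norm used for contraction is not one for which $\calT$ obviously maps bounded sets to bounded sets. The resolution --- standard for JMGT-type fixed points, cf.\ the strategy of~\cite{kaltenbacher2019jordan} --- is the two-topology device just sketched: establish the invariant ball in the strong norm, prove contraction in the weak norm, and invoke the closedness of the strong ball under weak-$*$ convergence together with the (weak-$*$) lower semicontinuity of the $\XWest$-norm to conclude that the fixed point actually lies in $\calBBW$ and satisfies the stated estimate. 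A secondary point requiring care is tracking the $\tau$-uniformity of all constants: every estimate inherited from Proposition~\ref{Prop:Wellp_GFEIII_BWest_lin} is already $\tau$-independent, and since the coefficient and source bounds above involve only $\XWest$-norms of $\ppsi^*$ (not $\tau$), the radius $R$ and data threshold $r$ can be chosen independently of $\tau \in (0,\bar\tau]$, with the $\bar\tau^a\|\ppsi_2\|_{H^1}^2$ appearing in the data smallness condition precisely because the linear estimate carries the factor $\taua \le \bar\tau^a$.
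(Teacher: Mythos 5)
Your proposal is correct and follows essentially the same route as the paper: Banach's fixed-point theorem on the ball $\calBBW$, self-mapping via the quantitative estimate of Proposition~\ref{Prop:Wellp_GFEIII_BWest_lin} (with the same $r^2+R^4$ bookkeeping), and contraction in the lower-order norm of $W^{1,\infty}(0,T;\Honezero)\cap H^2(0,T;L^2(\Omega))$ obtained by testing the difference equation with $\phi_{tt}$, precisely because the difference source term involves $v_{tt}$ and so fails to lie in $W^{1,1}(0,T;L^2(\Omega))$. The two-topology device and the closedness of the strong ball under the weak limit that you flag as the main obstacle is exactly how the paper closes the argument.
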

\begin{proof}
	We check that the conditions of the Banach fixed-point theorem are satisfied for the introduced mapping $\calT$. We note that the set $\calB^{\textup{W}}$ is non-empty, as the solution of the linear problem with $k_1=k_2=k_3=0$  belongs to it for small enough (with respect to $R$) initial data.  \\
	\indent To show that $\calT(\calBBW) \subset \calBBW$, take $\ppsi^* \in \calB^{\textup{W}} \subset \XWest$. Then the smoothness assumptions on $\aaa$ and $\bbb$ in Proposition~\ref{Prop:Wellp_GFEIII_BWest_lin} are fulfilled and the smallness assumption on $\aaa$ is fulfilled by reducing $R>0$. The non-degeneracy condition on $\aaa$ is fulfilled for small enough $R$. To see this, note that due to the embedding $H^2(\Omega) \hookrightarrow L^\infty(\Omega)$, we have
	\[
	\|2k_1 \ppsi^*\|_{L^\infty(L^\infty)} \leq C(\Omega, T)|k_1|\|\ppsi^*\|_{L^\infty(H^2)} \leq C(\Omega, T)|k_1| R.
	\]
	Thus $R>0$ should be chosen so that
	\[
	1-C(\Omega, T)|k_1|R \geq \ulaaa >0.
	\]
	Furthermore, we have 
	\begin{equation}
	\begin{aligned}
	\|\calN(\ppsi^*_t)\|_{W^{1,1}(L^2)}
	\lesssim&\, \begin{multlined}[t]  \|\ppsit^*\|^2_{L^2(L^4)} + \|\ppsit^*\|_{L^\infty(L^4)} \|\ppsitt^*\|_{L^1(L^4)} 
	\end{multlined}
	\leq C(\Omega, T) R^2,
	\end{aligned}
	\end{equation}
	where we have relied on the embedding $\Hone \hookrightarrow L^4(\Om)$. By employing Proposition~\ref{Prop:Wellp_GFEIII_BWest_lin}, we have 
	\begin{equation}
	\begin{aligned}
	\|\ppsi\|_{\XWest} 
	\leq\,
	C_1 \exp \left(C_2T(1+\|\bbb_t\|^2_{L^1(L^\infty)})\right)(\|\ppsi_0\|^2_{H^2}+\|\ppsi_1\|^2_{H^2} + \taua \|\nabla \ppsi_2\|^2_{L^2}+ \|f\|_{W^{1,1}(L^2)}^2).
	\end{aligned}
	\end{equation}
	Since
	\begin{equation}
	\begin{aligned}
	\|\bbb_t\|_{L^1(L^\infty)} \lesssim&\,  	\|\ppsi^*_t\|_{L^1(L^\infty)} \lesssim T \|\ppsi^*_t\|_{L^\infty(H^2)}  \lesssim TR,
	\end{aligned}
	\end{equation}
	we have
	\begin{equation}
	\begin{aligned}
	\|\ppsi\|_{\XWest} 
	\leq&\, C_1 \exp \left(C_2T(1+T^2R^2)\right)(r^2+CR^4).
	\end{aligned}
	\end{equation}
	Therefore, $\ppsi \in \calB^{\textup{W}}$ for sufficiently small radius $R$ and data size $r$. \\
	
	\noindent To prove strict contractivity, let $\calT \ppsi^{*} =\ppsi$ and $\calT v^*=v$. Denote $\phi=\ppsi-v$ and $\phi^*= \ppsi^*-v^*$. Then $\phi$ solves 
	\begin{equation}
	\begin{aligned}
	&\begin{multlined}[t]	\taua \frakKone * \phi_{ttt}+\aaa(\ppsi^*) \phi_{tt}
	-c^2 \bbb(\ppsi^*) \Delta \phi
	-   \taua c^2 \frakKone* \Delta \phi_t -\ttbdelta  \frakKtwo* \Delta \phi_{tt}\end{multlined}\\
	=&\, -2k_1 \phi^* v_{tt} -2k_2 \phi^*\Delta v-2k_3\phi^*_t (\ppsit^*+v_t^*):=f.
	\end{aligned}
	\end{equation}
	Note that we cannot prove contractivity with respect to the $\|\cdot\|_{\XWest}$ norm by exploiting the linear bound in Proposition~\ref{Prop:Wellp_GFEIII_BWest_lin}, as the right-hand side of this equation does not belong to $W^{1,1}(0,T; L^2(\Omega))$ due to the first term on the right-hand side in the last line (we do not have control over the time derivative of $v_{tt}$). Similarly to the proof of uniqueness for the linear problem, we can test this equation with $\phi_{tt}$ to obtain  
	\begin{equation}
	\begin{aligned}
	&\int_0^t \nLtwo{\phi_{tt}}^2\ds +  \|\nabla \phi_t\|^2_{L^\infty(L^2)}\\
	\lesssim&\, \begin{multlined}[t]\|f\|^2_{L^2(L^2)}+\left| 	-c^2(\bbb(\ppsi^*)\nabla \phi(t),  \nabla \phi_t(t))_{L^2} +2k_2c^2(\phi(t) \nabla \ppsi^*,  \nabla \phi_t(t))_{L^2} \right|\\
	+\left| -\int_0^t\left\{
	2k_2c^2(\ppsit^*\nabla \phi+\bbb(\ppsi^*)\nabla \phi_t,  \nabla \phi_t)_{L^2} +2k_2c^2(\ppsi_t \nabla \ppsi^*+\phi \nabla \ppsit^*,  \nabla \phi_t)_{L^2}\right\}\ds \right|.
	\end{multlined}
	\end{aligned}
	\end{equation}
	The terms on the right-hand side not containing $f$ can be further estimated as in the proof of uniqueness for the linear problem. We additionally have
	\begin{equation}
	\begin{aligned}
	\|f\|_{L^2(L^2)}
	\lesssim&\, \begin{multlined}[t]\|\phi^*\|_{L^\infty(L^4)}\|v_{tt}\|_{L^2(L^4)}+\|\phi^*\|_{L^\infty(L^4)}\|\Delta v\|_{L^\infty(L^2)}\\
	+\|\phi_t^*\|_{L^2(L^4)}\|\ppsit^*+v_t^*\|_{L^\infty(L^4)}.
	\end{multlined}
	\end{aligned}
	\end{equation}
	Altogether, it follows that
	\begin{equation}
	\begin{aligned}
	\int_0^t \nLtwo{\phi_{tt}}^2\ds +  \|\nabla \phi\|^2_{W^{1,\infty}(L^2)}
	\lesssim\, R^2 \left(\int_0^t \nLtwo{\phi^*_{tt}}^2\ds +  \|\nabla \phi^*\|^2_{W^{1,\infty}(L^2)} \right).
	\end{aligned}
	\end{equation}
	Therefore, we obtain strict contractivity of the mapping $\calT$ with respect to the norm of $W^{1,\infty}(0,T; \Honezero) \cap H^2(0,T; L^2(\Omega))$ by reducing $R$. The closedness of $\calBBW$ with respect to this norm can be argued similarly to, e.g.,~\cite[Theorem 4.1]{kaltenbacher2022parabolic} to conclude the proof..
\end{proof}
{We note that this uniform well-posedness result generalizes~\cite[Theorem 4.1]{kaltenbacher2019jordan}, where the \ref{fJMGTIII} equation with Westervelt nonlinearities is considered, to equation \eqref{IBVP_Westtype_nonlin} with Westervelt--Blackstock nonlinearities and general kernels satisfying the assumptions of this section.}
\subsection{Weak singular limit with Westervelt--Blackstock nonlinearities}\label{Subsec:Weaklim_tau_GFEIII}
Equipped with the previous uniform analysis, we are now ready to discuss the limiting behavior of these equations as $\tau \searrow 0$. Let $\tau \in (0, \bar{\tau}]$. Under the assumptions of Theorem~\ref{Thm:WellP_GFEIII_BWest} with the uniform smallness condition
\begin{equation}
\|\ppsi^\tau_0\|^2_{H^2}+\|\ppsi^\tau_1\|^2_{H^2} + \bar{\tau}^a \|\ppsi^\tau_2\|^2_{H^1} \leq r^2,
\end{equation}
let $\ppsi^{\tau}$ be the solution of
\begin{equation} \label{IBVP_tau_GFEIII_West}
\left \{	\begin{aligned}
&\begin{multlined}[t] \taua  \frakKone * \ppsittt^\tau+\aaa(\ppsi^\tau) \ppsitt^\tau
-c^2 \bbb(\ppsi^\tau) \Delta \ppsi^\tau 
-   {\tau^a} c^2 \frakKone* \Delta \ppsit^\tau \\ \hspace*{4cm}-\ttbdelta  \frakKtwo* \Delta \ppsitt^\tau+\calN(\ppsit^\tau)=0 \ \textup{ in } \Omega \times (0,T),\end{multlined}\\
&\ppsi^\tau\vert_{\partial \Omega}=0, \\
&	(\ppsi^\tau, \ppsit^\tau, \ppsitt^\tau)\vert_{t=0} = (\ppsi^\tau_0, \ppsi^\tau_1, \ppsi^\tau_2).
\end{aligned} \right. 
\end{equation}
Based on the previous analysis and the obtained uniform bounds with respect to the thermal relaxation time $\tau$, there exists a subsequence, not relabeled, such that 
\begin{equation} \label{weak_limits_1_tau_GFEIII_West}
\begin{alignedat}{4} 
\ppsi^\tau&\relbar\joinrel\rightharpoonup \ppsi && \quad \text{ weakly-$\star$}  &&\text{ in } &&L^\infty(0,T; \Honetwo),  \\
\ppsit^\tau &\relbar\joinrel\rightharpoonup \ppsit && \quad \text{ weakly-$\star$}  &&\text{ in } &&L^\infty(0,T; \Honetwo),\\
\ppsitt^\tau &\relbar\joinrel\rightharpoonup \ppsitt && \quad \text{ weakly}  &&\text{ in } &&L^2(0,T; \Honezero).
\end{alignedat} 
\end{equation} 
By the Aubin--Lions--Simon lemma (see~\cite[Corollary 4]{simon1986compact}), this further implies that
\begin{equation} \label{weak_limits_2_tau_GFEIII_West}
\begin{alignedat}{4} 
\ppsi^\tau&\longrightarrow \ppsi && \quad \text{ strongly}  &&\text{ in } &&C([0,T]; \Honezero),  \\
\ppsit^\tau &\longrightarrow \ppsit && \quad \text{ strongly}  &&\text{ in } &&C([0,T]; \Honezero).
\end{alignedat} 
\end{equation} 
Therefore, we know that
\begin{equation} \label{limits_tau_initial_BW}
\begin{alignedat}{4} 
\ppsi^\tau_0=\ppsi^\tau(0)&\longrightarrow \ppsi(0):=\ppsi_0 && \quad \text{ strongly}  &&\text{ in } && \Honezero,  \\
\ppsi^\tau_1=\ppsit^\tau(0) &\longrightarrow \ppsit(0):=\ppsi_1 && \quad \text{ strongly}  &&\text{ in } && \Honezero.
\end{alignedat} 
\end{equation} 
On top of this, by \eqref{weak_limits_1_tau_GFEIII_West}, we have
\begin{equation} \label{weak_limits_3_tau}
\begin{alignedat}{4} 
\frakKtwo * \nabla \ppsitt^\tau &\relbar\joinrel\rightharpoonup 	\frakKtwo * \nabla \ppsitt && \quad \text{ weakly}  &&\text{ in } &&L^2(0,T; \Ltwo).
\end{alignedat} 
\end{equation} 
We next prove that $u$ satisfies the limiting problem. Let $v \in C_0^\infty([0,T]; C_0^\infty(\Omega))$. We have with $\bar{\ppsi}=\ppsi-\ppsi^\tau$:
\begin{equation}
\begin{aligned}
&\begin{multlined}[t] \intTO \aaa(\ppsi) \ppsitt v \dxs-c^2 \intTO \bbb(\ppsi) \Delta u v \dxs +\rt^b \delta \intTO  \frakKtwo* \nabla \ppsitt \cdot \nabla v \dxs\\
+\intTO \calN(\ppsit) v \dxs = \textup{rhs} \end{multlined}
\end{aligned}
\end{equation}
with the right-hand side
\begin{equation}
	\begin{aligned}
\textup{rhs}:=&	\begin{multlined}[t]  \intTO  \aaa(\ppsi)\bar{\ppsi}_{tt} v \dxs- c^2 \intTO \bbb(\ppsi) \Delta \bar{\ppsi} v \dxs+\rt^b \delta \intTO  \frakKtwo* \nabla \bar{\ppsi}_{tt} \cdot \nabla v \dxs \\
- \intTO  \taua \frakKone * \ppsittt^\tau v \dxs + \taua c^2 \intTO \frakKone* \Delta \ppsit^\tau v \dxs \\
-\intTO (\aaa(\ppsi^\tau)-\aaa(\ppsi)) \ppsitt^\tau v \dxs + c^2\intTO (\bbb(\ppsi^\tau)-\bbb(u))\Delta \ppsi^\tau v \dxs\\ -\intTO (\calN(\ppsit^\tau) -\calN(\ppsit))v \dxs.  \end{multlined}
\end{aligned}
\end{equation}
We wish to prove that $\textup{rhs}$ tends to zero as $\tau \searrow 0$. To this end, we rely on the established weak convergence. We first discuss the terms involving the kernels. Note that
\begin{equation}
\begin{aligned}
&\intTO  \taua \frakKone * \ppsittt^\tau v \dxs\\
=&\,  - \taua \intTO   \frakKone * \ppsitt^\tau \, v_t \dxs- \taua \intTO  \frakKone(s) u_2 \, v \dxs	\rightarrow  0 \quad \text{as } \tau \searrow 0.
\end{aligned}
\end{equation}
Above we have relied on the uniform bound on
\[
\left |\intTO   \frakKone * \ppsitt^\tau \, v_t \dxs \right| \leq \|\frakKone\|_{\mathcal{M}} \|\ppsitt^\tau\|_{L^2(L^2)}\|v_t\|_{L^2(L^2)}.
\]
Similarly, we  have
\begin{equation}
\taua c^2 \intTO \frakKone* \Delta \ppsit^\tau v \dxs\rightarrow  0 \quad \text{as } \tau \searrow 0.
\end{equation}
By the limit in \eqref{weak_limits_3_tau}, it also follows that
\begin{equation}
\rt^b \delta \intTO  \frakKtwo* \nabla \bar{\ppsi}_{tt} \cdot \nabla v \dxs \rightarrow  0 \quad \text{as } \tau \searrow 0.
\end{equation}
By relying on \eqref{weak_limits_1_tau_GFEIII_West} and the equivalence of the norms $\|{\aaa(\ppsi) v}\|_{L^2}$, $\|v\|_{L^2}$, and $\|\bbb(\ppsi) v\|_{L^2}$ (under the assumptions of Theorem~\ref{Thm:WellP_GFEIII_BWest}), we can conclude that 
\begin{equation} 
\begin{alignedat}{4} 
\aaa(\ppsi) \ppsitt^\tau &\relbar\joinrel\rightharpoonup 	\aaa(\ppsi) \ppsitt && \quad \text{ weakly}  &&\text{ in } &&L^2(0,T; \Ltwo), \\
\bbb(\ppsi) \Delta \ppsi^\tau &\relbar\joinrel\rightharpoonup 		\bbb(\ppsi) \Delta \ppsi && \quad \text{ weakly}  &&\text{ in } &&L^2(0,T; \Ltwo), 
\end{alignedat} 
\end{equation} 
and thus 
\begin{equation}
\begin{aligned}
\intTO  \aaa(\ppsi)\bar{\ppsi}_{tt} v \dxs- c^2 \intTO \bbb(\ppsi) \Delta \bar{\ppsi} v \dxs  \rightarrow  0 \quad \text{as } \tau \searrow 0.
\end{aligned}
\end{equation}
Additionally, we have
\begin{equation}
\begin{aligned}
&-\intTO (\aaa(\ppsi^\tau)-\aaa(\ppsi) \ppsitt^\tau v \dxs + c^2\intTO (\bbb(\ppsi^\tau)-\bbb(\ppsi))\Delta \ppsi^\tau v \dxs \\
=& \, -2\intTO k_1(\ppsi^\tau-\ppsi) \ppsitt^\tau v \dxs - 2c^2 k_2\intTO (\ppsi^\tau-\ppsi)\Delta \ppsi^\tau v \dxs,	
\end{aligned}
\end{equation}
which tends to zero as well thanks to \eqref{weak_limits_1_tau_GFEIII_West} and \eqref{weak_limits_2_tau_GFEIII_West}. Finally,
\begin{equation}
\begin{aligned}
\intTO (\calN(\ppsit^\tau) -\calN(\ppsit))v \dxs =\, 2k_3\intTO \bar{\ppsi}_t (\ppsit^\tau+\ppsit)v \dxs, 
\end{aligned}
\end{equation}
which also tends to zero on account of again \eqref{weak_limits_1_tau_GFEIII_West} and the uniform bounds in \eqref{weak_limits_1_tau_GFEIII_West}. \\
\indent We have thus proven that there is a subsequence of $\{\ppsi^\tau\}_{\tau \in (0, \bar{\tau}]}$ that converges to a solution $\ppsi \in \XWest$ of the following problem: 
\begin{equation} \label{IBVP_limit_GFEIII_West}
\left \{	\begin{aligned}
&\begin{multlined}[t] (1+2k_1\ppsi) \ppsitt
-c^2 (1-2k_2 \ppsi) \Delta \ppsi -\ttbdelta  \frakKtwo* \Delta \ppsitt+2k_3 \ppsit^2=0 ,\end{multlined}\\
&\ppsi\vert_{\partial \Omega}=0, \\
&	(\ppsi, \ppsit)\vert_{t=0} =(\ppsi_0, \ppsi_1).
\end{aligned} \right. 
\end{equation}
The initial conditions $(\ppsi_0, \ppsi_1)$ are obtained in the limit of $(\ppsi_0^\tau, \ppsi^\tau_1)$ as $\tau \searrow 0$ in the sense of \eqref{limits_tau_initial_BW}.  The uniqueness of this solution in $\XWest$ can be shown by noting that the difference $\bar{\ppsi}=\ppsi^{(1)}-\ppsi^{(2)}$ of two solutions would have to satisfy
\begin{equation} 
\begin{aligned}
&(1+2k_1\ppsi^{(1)}) \bar{\ppsi}_{tt}
-c^2 (1-2k_2 \ppsi^{(1)}) \Delta \bar{\ppsi} -\rt^b\delta  \frakKtwo* \Delta \bar{\ppsi}_{tt} \\
=&\,-2k_1 \bar{\ppsi}\ppsi_{tt}^{(2)}-2k_2 c^2 \bar{\ppsi} \Delta \ppsi-2k_3 \bar{\ppsi}_t(\ppsit^{(1)}+\ppsit^{(2)})
\end{aligned}  
\end{equation}
with zero initial data and then testing this problem by $\bar{\ppsi}_{tt}$, similarly to the proof of contractivity in Theorem~\ref{Thm:WellP_GFEIII_BWest}. Thus by a subsequence-subsequence argument, the whole sequence $\{\ppsi^\tau\}_{\tau \in (0, \bar{\tau}]}$ converges to $\ppsi$. Altogether, we arrive at the following result.
\begin{theorem} \label{Thm:WeakLim_GFEIII_BWest}
	Let the assumptions of Theorem~\ref{Thm:WellP_GFEIII_BWest} hold for \eqref{IBVP_tau_GFEIII_West}. Then the family $\{\ppsi^\tau\}_{\tau \in (0, \bar{\tau}]}$ of solutions to
	\eqref{IBVP_tau_GFEIII_West}
	converges as $\tau \searrow 0$ in the sense of \eqref{weak_limits_1_tau_GFEIII_West}, \eqref{weak_limits_2_tau_GFEIII_West} to the solution $\ppsi \in \XWest$ of \eqref{IBVP_limit_GFEIII_West}.
\end{theorem}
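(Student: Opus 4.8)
The statement is effectively a summary of the discussion preceding it, so the plan is to organize that discussion into a self-contained argument. First I would fix $\tau \in (0,\bar{\tau}]$ and apply Theorem~\ref{Thm:WellP_GFEIII_BWest} to obtain the unique solution $\ppsi^\tau \in \calBBW$ of \eqref{IBVP_tau_GFEIII_West} together with the $\tau$-uniform bound $\|\ppsi^\tau\|_{\XWest}^2 \lesssim_T \|\ppsi^\tau_0\|_{H^2}^2 + \|\ppsi^\tau_1\|_{H^2}^2 + \taua\|\ppsi^\tau_2\|_{H^1}^2$; combined with the uniform smallness of the data this gives a bound on $\|\ppsi^\tau\|_{\XWest}$ that stays finite as $\tau \searrow 0$. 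By weak-$\star$ sequential compactness of bounded sets in the relevant Bochner spaces one extracts a (non-relabeled) subsequence with the limits \eqref{weak_limits_1_tau_GFEIII_West}, and the Aubin--Lions--Simon lemma~\cite[Corollary 4]{simon1986compact} upgrades these to the strong convergences \eqref{weak_limits_2_tau_GFEIII_West} in $C([0,T];\Honezero)$ for $\ppsi^\tau$ and $\ppsit^\tau$. In particular the limiting initial data are pinned down via \eqref{limits_tau_initial_BW}, and Young's convolution inequality together with the uniform $L^2(L^2)$ bound on $\ppsitt^\tau$ yields \eqref{weak_limits_3_tau}.

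Second, I would test \eqref{IBVP_tau_GFEIII_West} with $v \in C_0^\infty([0,T];C_0^\infty(\Omega))$, integrate the third-order term by parts twice in time so that the Laplace convolution and one derivative fall onto lower-order factors, and verify that the remainder $\textup{rhs}$ in the resulting splitting tends to zero as $\tau \searrow 0$. The $\tau$-weighted nonlocal terms $\taua\intTO\frakKone*\ppsitt^\tau\,v_t\dxs$, $\taua\intTO\frakKone(s)\ppsi_2 v\dxs$ and $\taua c^2\intTO\frakKone*\Delta\ppsit^\tau v\dxs$ vanish because each convolution is controlled by $\|\frakKone\|_{\mathcal{M}(0,T)}$ times a $\tau$-uniform $L^2(L^2)$ norm, while the prefactor $\taua \to 0$; the $\frakKtwo$-term vanishes by \eqref{weak_limits_3_tau}. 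For the nonlinear contributions I would pair the weakly convergent factors $\ppsitt^\tau \rightharpoonup \ppsitt$ and $\Delta\ppsi^\tau \rightharpoonup \Delta\ppsi$ in $L^2(L^2)$ with the strongly convergent coefficients: $\aaa(\ppsi^\tau)-\aaa(\ppsi)=2k_1(\ppsi^\tau-\ppsi)$ and $\bbb(\ppsi^\tau)-\bbb(\ppsi)=-2k_2(\ppsi^\tau-\ppsi)$ converge strongly to $0$ in $C([0,T];L^4(\Omega))$ (via $\Honezero\hookrightarrow L^4(\Omega)$), which suffices by H\"older's inequality against the weakly bounded factors, and $\calN(\ppsit^\tau)-\calN(\ppsit)=2k_3\bar{\ppsi}_t(\ppsit^\tau+\ppsit)$ with $\bar{\ppsi}_t\to 0$ strongly and $\ppsit^\tau+\ppsit$ uniformly bounded. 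Passing to the limit and using density of the test functions, $\ppsi \in \XWest$ then solves \eqref{IBVP_limit_GFEIII_West} with data $(\ppsi_0,\ppsi_1)$.

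Finally, to promote the subsequential convergence to convergence of the whole family $\{\ppsi^\tau\}_{\tau\in(0,\bar{\tau}]}$, I would establish uniqueness of the solution to \eqref{IBVP_limit_GFEIII_West} in $\XWest$: the difference $\bar{\ppsi}=\ppsi^{(1)}-\ppsi^{(2)}$ of two solutions satisfies $(1+2k_1\ppsi^{(1)})\bar{\ppsi}_{tt}-c^2(1-2k_2\ppsi^{(1)})\Delta\bar{\ppsi}-\ttbdelta\frakKtwo*\Delta\bar{\ppsi}_{tt} = -2k_1\bar{\ppsi}\ppsi_{tt}^{(2)}-2k_2c^2\bar{\ppsi}\Delta\ppsi-2k_3\bar{\ppsi}_t(\ppsit^{(1)}+\ppsit^{(2)})$ with zero initial data; testing with $\bar{\ppsi}_{tt}$ and invoking assumption \eqref{Afive_GFEIII} at $\tau=0$ (which supplies coercivity of $\frakKtwo$ alone) together with Gronwall's inequality, exactly as in the contractivity step of Theorem~\ref{Thm:WellP_GFEIII_BWest}, forces $\bar{\ppsi}\equiv 0$. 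A standard subsequence-of-subsequence argument then closes the proof. The main obstacle is bookkeeping rather than conceptual depth: one must ensure that every limit passage rests on a genuinely $\tau$-uniform estimate — in particular that the third-order term is treated through the bound on $\taua\frakKone*\ppsittt^\tau$ in $L^2(0,T;\Hneg)$ rather than on $\ppsittt^\tau$ itself — and that in each nonlinear product one factor is weakly bounded and the other strongly null in a compatible duality.
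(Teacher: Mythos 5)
Your proposal is correct and follows essentially the same route as the paper: uniform bounds from Theorem~\ref{Thm:WellP_GFEIII_BWest}, weak-$\star$ compactness plus Aubin--Lions--Simon, passage to the limit in the weak form by pairing weakly convergent highest-order factors with strongly convergent coefficients, killing the $\taua$-weighted nonlocal terms via $\|\frakKone\|_{\mathcal{M}}$ bounds, and uniqueness of the limit problem (tested with $\bar{\ppsi}_{tt}$) to upgrade to convergence of the whole family. The only cosmetic discrepancy is that a single integration by parts in time on the leading term (after rewriting $\frakKone*\ppsittt^\tau$ as $(\frakKone*\ppsitt^\tau)_t-\frakKone\,\ppsi_2$) already suffices, rather than two.
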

{Note that as a by-product of this analysis, we obtain the unique solvability of the limiting problem given in \eqref{IBVP_limit_GFEIII_West} for small data in $\Honetwo \times \Honetwo$.} 
\subsection{Well-posedness of a linearized Kuznetsov--Blackstock problem}  We next show how the previous arguments can be adapted to the equations with Kuznetsov--Blackstock nonlinearities. That is, we consider again the equation 
\begin{equation} \tag{\ref{JMGT_Kuzn_nonlocal_lin}}
\begin{aligned}
\begin{multlined}[t]
\taua \frakKone\Lconv \ppsi_{ttt} +\aaa\ppsi_{tt}
-c^2\bbb\Delta \ppsi 
- \taua c^2 \frakKone\Lconv \Delta \ppsi_t	-	\ttbdelta \frakKtwo\Lconv \Delta \ppsi_{tt}=f,
\end{multlined}
\end{aligned}
\end{equation}
but now we have in mind that
\begin{itemize}
	\item $\aaa=1+2{k}_1\ppsit$, \ $\bbb=1-2k_2\ppsit$, and $f= -2 k_3 \nabla \ppsi \cdot \nabla \ppsit$.
\end{itemize}
As mentioned before, we need more regularity of the variable coefficients and data in this setting. In particular, we need 
\begin{equation} \label{smoothness_a_b}
	\aaa \in C([0,T]; \Honetwo), \quad \bbb \in  H^1(0,T; \Honetwo).
\end{equation}
We still assume that $\aaa$ does not degenerate so that there exist $\ulaaa$, $\olaaa>0$, independent of $\tau$, such that
\eqref{nondeg_aaa_GFEIII} holds.  Furthermore, we need smallness in the following sense:  there exists as small enough $m>0$, independent of $\tau$, such that  
\begin{equation} \label{smallness_a}
	\begin{aligned}
		\|\D\aaa\|_{L^\infty(L^2)} \leq m.
	\end{aligned}
\end{equation} Let also $f \in W^{1,1}(0,T; H^1(\Omega))$. Now the $\tau$-independent solution space is
\begin{equation}
\begin{aligned}
	\calXBK= \{\ppsi \in W^{1,\infty}(0,T; \Honethree):\, \ppsitt \in L^2(0,T; \Honetwo)\}.
\end{aligned} 
\end{equation}
The analysis of this linearized problem can be conducted as before through a Faedo--Galerkin procedure based on the smooth eigenfunctions of the Dirichlet--Laplace operator; cf.\ Appendix~\ref{Appendix:Galerkin}. In this way, we can rely on $\Delta \ppsi^{(n)}= \Delta \ppsit^{(n)}=\Delta \ppsitt^{(n)}=$ on $\partial \Omega$. In what follows, we only discuss the testing procedure as the other details follow analogously to the Westervelt--Blackstock case. For ease of notation, we drop the superscript $(n)$. By testing the (semi-discretized) problem with $\D^2\ppsi_{tt}$ and employing coercivity assumptions \eqref{Afour_GFEIII} and \eqref{Afive_GFEIII} on the kernels, we obtain
\begin{equation}\label{est_1_GFEIII_BK}
\begin{aligned}
&
\int_0^t \nLtwo{\sqrt{\aaa}\D\ppsi_{tt}}^2
+\underline{c} {\|\nabla\D\ppsi_{t}\|^2_{L^\infty(L^2)}}\\
\leq&\,  \begin{multlined}[t]
-c^2 (\nabla[\bbb\D\ppsi],\nabla\D\ppsi_t)_{L^2}\Big \vert_0^t
+\overline{C}\nLtwo{\nabla\D\ppsi_1}^2+C \taua \|\Delta \ppsi_2\|^2_{L^2}
\\
+\int_0^t\Bigl(
-(\ppsi_{tt}\, \D\aaa+\nabla\ppsi_{tt}\cdot\nabla\aaa,\D\ppsi_{tt})_{L^2}
+c^2(\nabla[\bbb\D\ppsi]_t,\nabla\D\ppsi_t)_{L^2}
\Bigr)\ds \\
+\int_0^t (f,\D^2\ppsi_{tt})_{L^2}\ds,\end{multlined}
\end{aligned}
\end{equation}
where we have used the identity
$\aaa\D\ppsi_{tt}-\D[\aaa\ppsi_{tt}]=-\ppsi_{tt}\, \D\aaa-\nabla\ppsi_{tt}\cdot\nabla\aaa$,
and integrated by parts to obtain
\[
\int_0^t (\bbb\D\ppsi,\D^2\ppsi_{tt})_{L^2}\ds
=\int_0^t (\nabla[\bbb\D\ppsi]_t,\nabla\D\ppsi_t)_{L^2}\ds - (\nabla[\bbb\D\ppsi],\nabla\D\ppsi_t)_{L^2} \Big\vert_0^t.
\]
The terms on the right hand side of \eqref{est_1_GFEIII_BK} can be further estimated as follows. First, using the embeddings $H^1(\Omega) \hookrightarrow L^4(\Omega)$ and $H^2(\Omega) \hookrightarrow L^\infty(\Omega)$, we have
\[
\begin{aligned}
-(\nabla[\bbb(t)\D\ppsi(t)],\nabla\D\ppsi_t(t))_{L^2}
\leq&\,  (\|\nabla\bbb\D\ppsi\|_{L^\infty(L^2)} + 
\|\bbb\nabla\D\ppsi\|_{L^\infty(L^2)}) \|\nabla\D\ppsi_t\|_{L^\infty(L^2)}\\
\lesssim&\, \left(
\|\D\bbb\|_{L^\infty(L^2)}+1\right) \|\nabla\D\ppsi\|_{L^\infty(L^2)}\|\nabla\D\ppsi_t\|_{L^\infty(L^2)}, 
\end{aligned}
\]
where we have also used the fact that $\bbb-1$ vanishes on the boundary and that $\D(\bbb-1)=\D\bbb$. We can employ
\[
\|\nabla\D\ppsi\|_{L^\infty(L^2)} \leq \sqrt{T} \|\nabla\D\ppsit\|_{L^2(L^2)}+\|\nabla \D \ppsi_0\|_{L^2}
\]
to further bound the $\ppsi$ term.  We also have 
\[
\begin{aligned}
\int_0^t-(\ppsi_{tt}\, \D\aaa+\nabla\ppsi_{tt}\cdot\nabla\aaa,\D\ppsi_{tt})_{L^2}\ds
\leq&\, \|\ppsi_{tt}\, \D\aaa+\nabla\ppsi_{tt}\cdot\nabla\aaa\|_{L^2(L^2)}
\|\D\ppsi_{tt}\|_{L^2(L^2)}\\
\lesssim&\, \|\D\aaa\|_{L^\infty(L^2)}
\|\D\ppsi_{tt}\|_{L^2(L^2)}^2\lesssim\,m
\|\D\ppsi_{tt}\|_{L^2(L^2)}^2.
\end{aligned}
\]
Furthermore,
\[
\begin{aligned}
&\int_0^t(\nabla[\bbb\D\ppsi]_t,\nabla\D\ppsi_t)_{L^2}\ds\\
\lesssim&\, \begin{multlined}[t]
 \|\D\bbb_t\|_{L^2(L^2)}
\|\nabla\D\ppsi\|_{L^\infty(L^2)}\|\nabla\D\ppsi_t\|_{L^2(L^2)}
+
\left( \|\D\bbb\|_{L^\infty(L^2)}+1\right)
\|\nabla\D\ppsi_t\|_{L^2(L^2)}^2 \end{multlined}\\
\lesssim&\, \begin{multlined}[t]
\|\D\bbb_t\|_{L^2(L^2)}
(\sqrt{T}\|\nabla\D\ppsi_t\|_{L^2(L^2)}+\|\nabla\D\ppsi_0\|_{L^2})\|\nabla\D\ppsi_t\|_{L^2(L^2)}
+
\left( \|\D\bbb\|_{L^\infty(L^2)}+1\right)
\|\nabla\D\ppsi_t\|_{L^2(L^2)}^2. \end{multlined}
\end{aligned}
\]
We can estimate the term containing $f$ using integration by parts as follows:
\begin{equation}
	\begin{aligned}
		\int_0^t (f,\D^2\ppsi_{tt})_{L^2}\ds
		 =&\, 	 (f,\D^2\ppsi_{t})_{L^2}\Big \vert_0^t-		\int_0^t (f_t,\D^2\ppsi_{t})_{L^2}\ds\\
		=&\, 	\begin{multlined}[t] -(\nabla f, \nabla \D \ppsi_{t})_{L^2}\Big \vert_0^t+\int_{\partial\Omega} f \nabla \D \ppsi_{t} \cdot n \, \textup{d}\Gamma \Big \vert_0^t-		\int_0^t (\nabla f_t, \nabla \D\ppsi_{t})_{L^2}\ds\\+	\int_0^t  \int_{\partial \Omega} f_t \nabla \D\ppsi_{t} \cdot n \, \textup{d}\Gamma \ds 	\end{multlined} \\
	\lesssim&\, \|f\|^2_{W^{1,1}(H^1)} + \varepsilon \| \Delta \ppsi_t\|^2_{L^\infty(H^1)}+\| \Delta \ppsi_1\|^2_{H^1}.
	\end{aligned}
\end{equation}
Employing the above bounds to further estimate the right-hand side terms in \eqref{est_1_GFEIII_BK} and assuming that $\|\D\aaa\|_{L^\infty(L^2)}$ is small enough (independently of $\tau$) thus leads to
\begin{equation}
	\begin{aligned}
		\int_0^t \nLtwo{\D\ppsi_{tt}}^2
+{\|\nabla\D\ppsi_{t}\|^2_{L^\infty(L^2)}} 
		\lesssim_T&\,  \begin{multlined}[t]	\nLtwo{\nabla\D\ppsi_0}^2+
			\| \Delta \ppsi_1\|^2_{H^1}+ \taua \|\Delta \ppsi_2\|^2_{L^2}+
			\|f\|^2_{W^{1,1}(H^1)}.\end{multlined}
	\end{aligned}
\end{equation}
We can further use
\[
\|\nabla\D\ppsi\|_{L^\infty(L^2)} \leq T \|\nabla\D\ppsit\|_{L^\infty(L^2)}+\|\nabla \D \ppsi_0\|_{L^2}
\]
and
\[
\|\Delta \ppsi\|_{L^\infty(L^2)} \lesssim_T \|\Delta \ppsitt\|_{L^2(L^2)} + \|\Delta \ppsi_0\|_{L^2}+ \|\Delta \ppsi_1\|_{L^2}
\]
to obtain a bound on $\|\ppsi\|_{\calXBK}$:
\begin{equation}\label{est_2_GFEIII_BK}
	\begin{aligned}
		\|\ppsi\|_{\calXBK}
		\lesssim_T&\,  \begin{multlined}[t]\|\ppsi_0\|^2_{H^3}+
			\|  \ppsi_1\|^2_{H^3}+ \taua \| \ppsi_2\|^2_{H^2}+
			\|f\|^2_{W^{1,1}(H^1)},\end{multlined}
	\end{aligned}
\end{equation}
where the hidden constant has the form
\begin{equation}
	C= C_1 \exp \left((1+ \|\D\bbb_t\|_{L^2(L^2)}+\|\D\bbb\|_{L^\infty(L^2)})C_2T\right).
\end{equation}
  This bound for the linearized problem forms the foundation for the well-posedness and limiting analysis in the Kuznetsov--Blackstock case.  
\subsection{Uniform well-posedness with Kuznetsov--Blackstock-type nonlinearities} \label{Sec:Testing_psi_tt}
The next step is as before to set up a fixed-point mapping $\calT: \calBBK \ni \ppsi^* \mapsto \ppsi$, 
where $\ppsi$ solves the linear equation in \eqref{IBVP_Westtype_lin} with
\begin{equation}
\begin{aligned}
\aaa(\ppsi_t^*) =\,1+2k_1 \ppsi^*_t, \quad \bbb(\ppsi_t^*) = 1-2k_2 \ppsi_t^*, \
f=\, -\calN(\nabla \ppsi^*, \ppsi^*_t)= -2 k_3 \nabla \ppsi^*\cdot \nabla \ppsi_t^*,
\end{aligned}
\end{equation}
and $\ppsi^*$ is taken from the ball
\begin{equation} \label{def_BBK}
\begin{aligned}
\calBBK =\left \{ \ppsi^* \in \calXBK:\right.&\, \, \|\ppsi^*\|_{\calXBK} \leq R,\ 
\left	(\ppsi^*, \ppsit^*, \ppsitt^*)\vert_{t=0} =(\ppsi_0, \ppsi_1, \ppsi_2) \}. \right.
\end{aligned}
\end{equation}
\begin{theorem} \label{Thm:WellP_GFEIII_BKuznetsov}
	Let $T>0$ and $\tau \in (0, \bar{\tau}]$. Let assumptions \eqref{reg_kernels} and \eqref{Athree_GFEIII}--\eqref{Afive_GFEIII} on the kernels $\frakKone$ and $\frakKtwo$ hold. Let 
		\begin{equation}
	\begin{aligned}
	(\ppsi_0, \ppsi_1, \ppsi_2) \in \Honethree \times \Honethree \times \Honetwo.
	\end{aligned}
	\end{equation}
	There exists $r=r(T)>0$, independent of $\tau$, such that if
	\begin{equation}
	\|\ppsi_0\|^2_{H^3}+\|\ppsi_1\|^2_{H^3} + \bar{\tau}^a \|\ppsi_2\|^2_{H^2} \leq {r}^2,
	\end{equation}
	then there is a unique solution $
	\ppsi \in \calBBK$, such that $\taua \frakKone*\ppsittt \in L^2(0,T; L^2(\Omega))$,
	 of 
	\begin{equation} \label{IBVP_GFEIII_BlackstockKuznetsov}
	\begin{aligned}
		\taua  \frakKone * \ppsittt+(1+2k_1 \ppsi_t) \ppsitt
	-c^2 (1-2k_2 \ppsi_t) \Delta \ppsi 
	-   \taua c^2 \frakKone* \Delta \ppsit -\ttbdelta  \frakKtwo* \Delta \ppsitt \\+ 2 k_3 \nabla \ppsi \cdot \nabla \ppsi_t=0
	\end{aligned} 
	\end{equation}
with initial \eqref{IC} and boundary \eqref{BC} data. The solution satisfies
\begin{equation}
	\|\ppsi\|^2_{\calXBK}
\lesssim_T\,  \|\ppsi_0\|^2_{H^3}+
	\|  \ppsi_1\|^2_{H^3}+ \taua \| \ppsi_2\|^2_{H^2},
\end{equation}
where the hidden constant does not depend on $\tau$.
\end{theorem}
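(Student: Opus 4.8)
The plan is to replicate the Banach fixed-point scheme used for Theorem~\ref{Thm:WellP_GFEIII_BWest}, but now built on the linearized a priori bound \eqref{est_2_GFEIII_BK} for Kuznetsov--Blackstock coefficients (in place of Proposition~\ref{Prop:Wellp_GFEIII_BWest_lin}) and in the stronger solution space $\calXBK$. Concretely, I would study the mapping $\calT\colon\ppsi^*\mapsto\ppsi$ on the ball $\calBBK$ from \eqref{def_BBK}, where $\ppsi$ solves the linear equation \eqref{JMGT_Kuzn_nonlocal_lin} with $\aaa=1+2k_1\ppsit^*$, $\bbb=1-2k_2\ppsit^*$, $f=-2k_3\,\nabla\ppsi^*\cdot\nabla\ppsit^*$ and data \eqref{IC}, \eqref{BC}. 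Solvability of this linear problem, the regularity $\taua\frakKone\Lconv\ppsittt\in L^2(0,T;L^2(\Omega))$, and the estimate \eqref{est_2_GFEIII_BK} come from the Faedo--Galerkin procedure of Appendix~\ref{Appendix:Galerkin}, where the bound on the leading term is the one-derivative-higher analogue of \eqref{bootstrap_Hneg} combined with assumption \eqref{Athree_GFEIII} ($\tfrakKone\in L^2(0,T)$); this is exactly the foundation laid in the subsection preceding the theorem.

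First I would show $\calT(\calBBK)\subset\calBBK$ for suitable $R,r$. For $\ppsi^*\in\calBBK$ the coefficients $\aaa,\bbb$ obey the smoothness \eqref{smoothness_a_b} since $\ppsi^*\in W^{1,\infty}(0,T;\Honethree)$ and $\ppsitt^*\in L^2(0,T;\Honetwo)$; the non-degeneracy \eqref{nondeg_aaa_GFEIII} and the smallness \eqref{smallness_a} hold once $R$ is reduced, using $H^2(\Omega)\hookrightarrow L^\infty(\Omega)$ to estimate $\|2k_1\ppsit^*\|_{L^\infty(L^\infty)}$ and $\|\D\aaa\|_{L^\infty(L^2)}=\|2k_1\D\ppsit^*\|_{L^\infty(L^2)}\lesssim|k_1|R$. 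For the source, $\|f\|_{W^{1,1}(0,T;H^1)}\lesssim\|\ppsi^*\|_{\calXBK}\,\|\ppsit^*\|_{\calXBK}\lesssim R^2$, the worst terms being $\nabla\ppsi^*\cdot\nabla^2\ppsit^*$ in $\nabla f$ and $\nabla\ppsi^*\cdot\nabla^2\ppsitt^*$ in $\nabla f_t$, which are controlled by H\"older with the embeddings $H^1\hookrightarrow L^4,L^6$ and $H^2\hookrightarrow L^\infty$ together with $\ppsitt^*\in L^2(0,T;\Honetwo)$ and $\ppsi^*,\ppsit^*\in L^\infty(0,T;\Honethree)$. Inserting this into \eqref{est_2_GFEIII_BK}, and absorbing the factors $\|\D\bbb\|_{L^\infty(L^2)}\lesssim R$, $\|\D\bbb_t\|_{L^2(L^2)}\lesssim R$ into the exponential constant, gives $\|\ppsi\|_{\calXBK}^2\lesssim_T r^2+R^4$, so self-mapping holds for $R$ then $r$ small enough; $\calBBK$ is nonempty because the linear solution with $k_1=k_2=k_3=0$ lies in it for small data.

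Next I would establish strict contractivity. Writing $\calT\ppsi^*=\ppsi$, $\calT v^*=v$, the difference $\phi=\ppsi-v$ solves \eqref{JMGT_Kuzn_nonlocal_lin} with coefficients frozen at $\ppsi^*$ and right-hand side $f=-2k_1\phi_t^*v_{tt}-2k_2c^2\phi_t^*\D v-2k_3(\nabla\phi^*\cdot\nabla\ppsit^*+\nabla v^*\cdot\nabla\phi_t^*)$ with $\phi^*=\ppsi^*-v^*$, and zero data. This $f$ is not in $W^{1,1}(0,T;H^1(\Omega))$ (no strong control of $v_{ttt}$ or $\D v_t$), so \eqref{est_2_GFEIII_BK} cannot be applied directly; instead, mirroring the uniqueness/contraction step of Theorem~\ref{Thm:WellP_GFEIII_BWest}, I would test the Galerkin truncation with the one-level-lower test function $-\D\phi_{tt}$. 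Coercivity \eqref{Afour_GFEIII} applied to the leading term (with $y=\nabla\phi_{tt}$, boundary contribution $|\nabla\phi_{tt}(0)|^2=0$) and \eqref{Afive_GFEIII} applied to the two remaining nonlocal terms (with $y=\D\phi_t$, boundary contribution $|\D\phi_t(0)|^2=0$) produce, after the standard integrations by parts in space and time, control of $\|\phi_{tt}\|_{L^2(0,T;\Honezero)}^2$ and $\|\phi_t\|_{L^\infty(0,T;\Honetwo)}^2$; the subcritical contributions of $\nabla[\bbb(\ppsi^*)\D\phi]$ and of $\nabla\aaa(\ppsi^*)\cdot\nabla\phi_{tt}$ are absorbed exactly as in the linear estimate, using $\|\D\aaa(\ppsi^*)\|_{L^\infty(L^2)}\lesssim R$ and Gronwall's inequality in $T$. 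The $f$-terms are then handled by redistributing one derivative: in space for the boundary-vanishing pieces $\phi_t^*v_{tt}$, $\phi_t^*\D v$ (giving $\|\nabla f_1\|_{L^2(L^2)}$ paired with $\|\nabla\phi_{tt}\|_{L^2(L^2)}$), and in time for the gradient products $\nabla\phi^*\cdot\nabla\ppsit^*$, $\nabla v^*\cdot\nabla\phi_t^*$ (giving $\|f_2\|_{L^\infty(L^2)}$ and $\|(f_2)_t\|_{L^2(L^2)}$ paired with $\|\D\phi_t\|_{L^\infty(L^2)}$), each of which is bounded by $R$ times the weaker norm $\|\phi^*\|_{W^{1,\infty}(0,T;\Honetwo)\cap H^2(0,T;\Honezero)}$ via H\"older and the embeddings $H^1\hookrightarrow L^4,L^6$, $H^2\hookrightarrow L^\infty$, together with $\ppsi^*,v^*\in W^{1,\infty}(0,T;\Honethree)$ and $\ppsitt^*,v_{tt}\in L^2(0,T;\Honetwo)$. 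This yields $\calT$ as a strict contraction on $\calBBK$ in that weaker norm after reducing $R$, and $\calBBK$ is closed with respect to it by a weak-compactness argument as in \cite[Theorem~4.1]{kaltenbacher2022parabolic}. Banach's theorem then gives the unique fixed point $\ppsi\in\calBBK$ solving \eqref{IBVP_GFEIII_BlackstockKuznetsov}; the stated estimate follows by feeding $f=-\calN(\nabla\ppsi,\nabla\ppsit)$, with $\|f\|_{W^{1,1}(H^1)}\lesssim\|\ppsi\|_{\calXBK}^2\lesssim R\|\ppsi\|_{\calXBK}$, into \eqref{est_2_GFEIII_BK} and absorbing the resulting $R^2\|\ppsi\|_{\calXBK}^2$ for $R$ small.

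The main obstacle is the contraction step: each of the three groups of terms in the right-hand side $f$ of the $\phi$-equation must be dominated by $R$ times the \emph{weaker} contraction norm of $\phi^*$ rather than its $\calXBK$-norm, which is what forces the choice of $-\D\phi_{tt}$ as the test function and the careful splitting into pieces that vanish on $\partial\Omega$ (integrated by parts in space, leaving no boundary term) and gradient-product pieces that do not (integrated by parts in time instead). A secondary point requiring attention is that the coefficient regularity \eqref{smoothness_a_b} is the natural ceiling compatible with $\ppsi^*\in\calBBK$, so every estimate for $\aaa,\bbb$ and $f$ has to be realized strictly within the embeddings afforded by $W^{1,\infty}(0,T;\Honethree)\cap H^2(0,T;\Honetwo)$, which is why the Kuznetsov--Blackstock case needs $H^3\times H^3\times H^2$ data.
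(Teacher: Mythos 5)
Your proposal follows the same overall strategy as the paper's proof (Appendix~\ref{Appendix:WellPproofKB}): Banach's fixed-point theorem on $\calBBK$, self-mapping via the linearized Kuznetsov--Blackstock estimate \eqref{est_2_GFEIII_BK} with $\|\calN(\nabla\ppsi^*,\nabla\ppsit^*)\|_{W^{1,1}}\lesssim R^2$, and contractivity in a norm weaker than $\|\cdot\|_{\calXBK}$ because the difference equation's right-hand side (containing $\phi_t^*v_{tt}$) cannot be time-differentiated. The self-mapping step is essentially identical to the paper's. Where you diverge is the contraction step: the paper tests the difference equation with $\phi_{tt}$ and obtains contractivity in $W^{1,\infty}(0,T;\Honezero)\cap H^1(0,T;L^2(\Omega))$, exploiting an integration by parts of the $\bbb(\ppsit^*)\Delta\phi$ term in space and time so that only $H^1$-level quantities of $\phi$ appear; you instead test with $-\D\phi_{tt}$ and aim for contractivity in the stronger norm $W^{1,\infty}(0,T;\Honetwo)\cap H^2(0,T;\Honezero)$. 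Your variant can be made to work --- the splitting of $f$ into boundary-vanishing pieces (integrated by parts in space) and gradient products (integrated by parts in time) is the right bookkeeping, and the resulting bounds do close in $R$ times the weaker norm of $\phi^*$ --- but it is heavier than necessary: applying \eqref{Afour_GFEIII} with $y=\nabla\phi_{tt}$ requires $\nabla\phi_{tt}\in X^2_{\frakKone}$, which is not available for the limit solutions, so you are forced to carry out the whole contraction estimate at the level of differences of Galerkin approximants in a common $V_n$ and pass to the limit, and you must track the boundary terms in $\int_\Omega f\,\D\phi_{tt}$. The paper's choice of $\phi_{tt}$ sidesteps both issues since $\phi_{tt}\in L^2(0,T;\Honetwo)$ is an admissible multiplier for the limit equation directly. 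In short: correct, same framework, with one step executed by a different (admissible but more laborious) testing strategy.
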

\begin{proof}
{The proof follows by employing the Banach fixed-point theorem on $\calT$, analogously to the proof of Theorem~\ref{Thm:WellP_GFEIII_BWest}. We provide the details in Appendix~\ref{Appendix:WellPproofKB}.}
\end{proof}
{This uniform well-posedness result generalizes~\cite[Theorem 6.1]{kaltenbacher2019jordan}, where the \ref{fJMGTIII} equation with Kuznetsov nonlinearities is considered, to equation \eqref{IBVP_GFEIII_BlackstockKuznetsov} with Kuznetsov--Blackstock nonlinearities and kernels satisfying the assumptions of this section.}
\subsection{Weak singular limit with Kuznetsov--Blackstock type nonlinearities} 
We next discuss the limiting behavior of the equations with Kuznetsov--Blackstock nonlinearities. Let $\tau \in (0, \bar{\tau}]$. Under the assumptions of Theorem~\ref{Thm:WellP_GFEIII_BKuznetsov}, with
	\begin{equation}
	\|\ppsi^\tau_0\|^2_{H^3}+\|\ppsi^\tau_1\|^2_{H^3} + \bar{\tau}^a \|\ppsi^\tau_2\|^2_{H^2} \leq r^2,
\end{equation}
consider 
\begin{equation} \label{IBVP_tau_GFEIII_BK}
	\left \{	\begin{aligned}
&\begin{multlined}[t]	\taua  \frakKone * \ppsittt^\tau+(1+2k_1 \ppsi^\tau_t) \ppsitt^\tau
-c^2 (1-2k_2 \ppsi^\tau_t) \Delta \ppsi^\tau
-   \taua c^2 \frakKone* \Delta \ppsit^\tau \\-\ttbdelta \frakKtwo* \Delta \ppsitt^\tau + 2 k_3 \nabla \ppsi^\tau \cdot \nabla \ppsi^\tau_t=0,\end{multlined}\\
&\ppsi^\tau\vert_{\partial \Omega}=0, \\
&	(\ppsi^\tau, \ppsit^\tau, \ppsitt^\tau)\vert_{t=0} =(\ppsi^\tau_0, \ppsi^\tau_1, \ppsi^\tau_2) .
\end{aligned} \right. 
\end{equation}
From the previous analysis we knowc that a unique solution of this problem exists  in $ \calBBK$. Therefore, there exists a subsequence, not relabeled, such that 
\begin{equation} \label{weak_limits_1_tau_GFEIII_KB}
\begin{alignedat}{4} 
\ppsi^\tau&\relbar\joinrel\rightharpoonup \ppsi && \quad \text{ weakly-$\star$}  &&\text{ in } &&L^\infty(0,T; \Honethree),  \\
\ppsit^\tau &\relbar\joinrel\rightharpoonup \ppsit && \quad \text{ weakly-$\star$}  &&\text{ in } &&L^\infty(0,T; \Honethree),\\
\ppsitt^\tau &\relbar\joinrel\rightharpoonup \ppsitt && \quad \text{ weakly}  &&\text{ in } &&L^2(0,T; \Honetwo).
\end{alignedat} 
\end{equation} 
Similarly to before, using the Aubin--Lions--Simon lemma, this further implies
\begin{equation} \label{weak_limits_2_tau_GFEIII_KB}
\begin{alignedat}{4} 
\ppsi^\tau&\longrightarrow \ppsi && \quad \text{ strongly}  &&\text{ in } &&C([0,T]; \Honetwo),  \\
\ppsit^\tau &\longrightarrow \ppsit && \quad \text{ strongly}  &&\text{ in } &&C([0,T]; \Honetwo),
\end{alignedat} 
\end{equation} 
and therefore,
\begin{equation} \label{limits_2_tau_initial_GFEIII_BK}
	\begin{alignedat}{4} 
		\ppsi_0^\tau=\ppsi^\tau(0)&\longrightarrow \ppsi(0):=\ppsi_0 && \quad \text{ strongly}  &&\text{ in } && \Honetwo,  \\
		\ppsi_1^\tau=\ppsit^\tau(0) &\longrightarrow \ppsit(0):= \ppsi_1&& \quad \text{ strongly}  &&\text{ in } && \Honetwo.
	\end{alignedat} 
\end{equation} 
It remains to prove that $u$ solves the limiting problem. Let $v \in C_0^\infty([0,T]; C_0^\infty(\Omega))$. We have 
\begin{equation}
\begin{aligned}
&\begin{multlined}[t] \intTO \aaa(\ppsi_t) \ppsitt v \dxs- c^2 \intTO \bbb(\ppsi_t) \Delta \ppsi v \dxs +\rt^b \delta \intTO  \frakKtwo* \nabla \ppsitt \cdot \nabla v \dxs\\
+\intTO \calN(\nabla \ppsi, \nabla \ppsit) v \dxs = \textup{rhs},\end{multlined}
\end{aligned}
\end{equation}
where, with $\bar{\ppsi}=\ppsi-\ppsi^\tau$, the right-hand side is
\begin{equation}
	\begin{aligned}
		\textup{rhs}:
		=&	\begin{multlined}[t]  \intTO  \aaa(\ppsit)\bar{\ppsi}_{tt} v \dxs- c^2 \intTO \bbb(\ppsit) \Delta \bar{\ppsi} v \dxs+\rt^b \delta \intTO  \frakKtwo* \nabla \bar{\ppsi}_{tt} \cdot \nabla v \dxs \\
			- \intTO  \taua \frakKone * \ppsittt^\tau v \dxs + \taua c^2 \intTO \frakKone* \Delta \ppsit^\tau v \dxs \\
			-\intTO (\aaa(\ppsit^\tau)-\aaa(\ppsit)) \ppsitt^\tau v \dxs + c^2\intTO (\bbb(\ppsit^\tau)-\bbb(\ppsit))\Delta \ppsi^\tau v \dxs-\\ -\intTO (\calN(\nabla \ppsi^\tau,  \nabla \ppsit^\tau) -\calN(\nabla \ppsi,  \nabla \ppsit))v \dxs.  \end{multlined}
	\end{aligned}
\end{equation}
We should prove that $\textup{rhs}$ tends to zero as $\tau \searrow 0$. We only comment here on how to tackle the $\aaa$, $\bbb$ and $\calN$ terms; the other terms can be treated as in Section~\ref{Subsec:Weaklim_tau_GFEIII}.
By relying on the equivalence of the norms $\|{\aaa(\ppsit) v}\|_{L^2}$, $\|v\|_{L^2}$, and $\|\bbb(\ppsi_t) v\|_{L^2}$ (under the assumptions of Theorem~\ref{Thm:WellP_GFEIII_BKuznetsov}), we can conclude that 
\begin{equation}
\begin{aligned}
\intTO  \aaa(\ppsit)\bar{\ppsi}_{tt} v \dxs+ c^2 \intTO \bbb(\ppsit) \Delta \bar{\ppsi} v \dxs  \rightarrow  0 \quad \text{as } \tau \searrow 0.
\end{aligned}
\end{equation}
We also have
\begin{equation}
\begin{aligned}
&-\intTO (\aaa(\ppsit^\tau)-\aaa(\ppsit)) \utt^\tau v \dxs + c^2\intTO (\bbb(\ppsit^\tau)-\bbb(\ppsit))\Delta \ppsi^\tau v \dxs \\
=& \, -2k_1\intTO \bar{\ppsi}_t \ppsitt^\tau v \dxs - 2 k_2 c^2 \intTO \bar{\ppsi}_t \Delta \ppsi^\tau v \dxs,	
\end{aligned}
\end{equation}
which tends to zero as well thanks to \eqref{weak_limits_1_tau_GFEIII_KB} and \eqref{weak_limits_2_tau_GFEIII_KB}. Finally,
\begin{equation}
\begin{aligned}
\intTO (\calN(\nabla \ppsi^\tau, \ppsit^\tau) -\calN(\nabla \ppsi,  \nabla \ppsit))v \dxs 
=\, 2k_3 \intTO ( \nabla \bar{\ppsi} \cdot \nabla \ppsi_t^\tau + \nabla \ppsi \cdot \nabla \bar{\ppsi}_t )  v \dxs, 
\end{aligned}
\end{equation}
which tends to zero on account of again \eqref{weak_limits_2_tau_GFEIII_KB} and the uniform bounds obtained in Theorem~\ref{Thm:WellP_GFEIII_BKuznetsov}. 
 Uniqueness of solutions for the limiting problem follows by testing the equation solved by the difference $\bar{\ppsi}$ of two solutions by $\bar{\ppsi}_{tt}$, similarly to the proof of contractivity in Theorem~\ref{Thm:WellP_GFEIII_BKuznetsov} given in Appendix~\ref{Appendix:WellPproofKB}. This allows us to employ a subsequence-subsequence argument on $\{\ppsi^\tau\}_{\tau \in (0, \bar{\tau}]}$. Altogether, we arrive at the following result.
\begin{theorem} \label{Thm:WeakLim_GFEIII_BKuznetsov}
	Let the assumptions of Theorem~\ref{Thm:WellP_GFEIII_BKuznetsov} hold for problem \eqref{IBVP_tau_GFEIII_BK}. Then the family $\{\ppsi^\tau\}_{\tau \in (0, \bar{\tau}]}$ of solutions to
	\eqref{IBVP_tau_GFEIII_BK}
	converges in the sense of \eqref{weak_limits_1_tau_GFEIII_KB}, \eqref{limits_2_tau_initial_GFEIII_BK} as $\tau \searrow 0$  to the solution $\ppsi \in \XWest$ of 
	\begin{equation} \label{IBVP_limit_GFEIII_BK}
	\left \{	\begin{aligned}
	&\begin{multlined}[t](1+2k_1 \ppsit) \ppsitt
	-c^2 (1-2k_2 \ppsit) \Delta \ppsi
 -\ttbdelta \frakKtwo* \Delta \ppsitt\\ \hspace*{6cm}+ 2 k_3 \nabla \ppsi \cdot \nabla \ppsit=0 \ \textup{ in } \Omega \times (0,T), \end{multlined}\\
	&\ppsi\vert_{\partial \Omega}=0, \\
	&	(\ppsi, \ppsit)\vert_{t=0} =(\ppsi_0, \ppsi_1).
	\end{aligned} \right. 
	\end{equation}
\end{theorem}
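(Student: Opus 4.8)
\emph{Proof plan.} The strategy is to read off the convergence directly from the $\tau$-uniform bounds of Theorem~\ref{Thm:WellP_GFEIII_BKuznetsov} together with a compactness argument and a passage to the limit in the weak form of \eqref{IBVP_tau_GFEIII_BK}, following the blueprint of the Westervelt--Blackstock case in Section~\ref{Subsec:Weaklim_tau_GFEIII} and the discussion preceding the statement.

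First I would fix $\tau \in (0,\bar\tau]$ and invoke Theorem~\ref{Thm:WellP_GFEIII_BKuznetsov} to obtain the unique $\ppsi^\tau \in \calBBK$ together with the estimate
\[
\|\ppsi^\tau\|_{\calXBK}^2 \lesssim_T \|\ppsi_0^\tau\|_{H^3}^2 + \|\ppsi_1^\tau\|_{H^3}^2 + \taua\|\ppsi_2^\tau\|_{H^2}^2,
\]
whose right-hand side is bounded by $r^2$ uniformly in $\tau$ since $\taua \le \bar\tau^a$. Banach--Alaoglu then yields a (non-relabeled) subsequence enjoying the weak-$\star$/weak convergences \eqref{weak_limits_1_tau_GFEIII_KB}, and the Aubin--Lions--Simon lemma upgrades these to the strong convergences \eqref{weak_limits_2_tau_GFEIII_KB} in $C([0,T];\Honetwo)$, which in turn fixes the limiting data as in \eqref{limits_2_tau_initial_GFEIII_BK} and gives $\frakKtwo*\nabla\ppsitt^\tau \rightharpoonup \frakKtwo*\nabla\ppsitt$ weakly in $L^2(L^2)$.

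Next I would test the equation with $v\in C_0^\infty([0,T];C_0^\infty(\Omega))$ and show that the difference between this identity and the weak form of \eqref{IBVP_limit_GFEIII_BK} vanishes as $\tau\searrow 0$. The two terms carrying the explicit prefactor $\taua$ (the third-order term $\taua\frakKone*\ppsittt^\tau$, integrated by parts in time onto $v_t$ and onto the $t=0$ contribution, and $\taua c^2\frakKone*\Delta\ppsit^\tau$) vanish because $\|\frakKone\|_{\mathcal{M}}$ and the relevant $\tau$-uniform norms of $\ppsitt^\tau$ and $\Delta\ppsit^\tau$ stay bounded while $\taua\to0$. The memory difference $\frakKtwo*\nabla\bar\ppsi_{tt}$ vanishes by the weak convergence just recorded, and the terms $\aaa(\ppsit)\bar\ppsi_{tt}$, $\bbb(\ppsit)\Delta\bar\ppsi$ vanish by the weak convergence in \eqref{weak_limits_1_tau_GFEIII_KB} combined with the norm equivalences $\|\aaa(\ppsit)v\|_{L^2}\sim\|v\|_{L^2}\sim\|\bbb(\ppsit)v\|_{L^2}$ (valid under the smallness of the data). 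Finally the coefficient differences $(\aaa(\ppsit^\tau)-\aaa(\ppsit))\ppsitt^\tau$, $(\bbb(\ppsit^\tau)-\bbb(\ppsit))\Delta\ppsi^\tau$ and the nonlinearity difference $\calN(\nabla\ppsi^\tau,\nabla\ppsit^\tau)-\calN(\nabla\ppsi,\nabla\ppsit)$ are each a product of a strongly vanishing factor ($\bar\ppsi_t\to0$ in $C(L^\infty)$ via $\Honetwo\hookrightarrow L^\infty$, or $\nabla\bar\ppsi,\nabla\bar\ppsi_t\to0$ in $C(L^2)$) with a $\tau$-uniformly bounded one, hence also vanish. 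This identifies $\ppsi$ as a weak solution of \eqref{IBVP_limit_GFEIII_BK} with data $(\ppsi_0,\ppsi_1)$, and $\ppsi\in\XWest$ by weak lower semicontinuity of the norms.

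Finally I would establish uniqueness for \eqref{IBVP_limit_GFEIII_BK}: the difference $\bar\ppsi$ of two solutions solves a linear nonlocal second-order equation with zero data whose right-hand side is controlled by the (small) solution norms; testing with $\bar\ppsi_{tt}$ and using the coercivity \eqref{Afive_GFEIII} of $\frakKtwo$, exactly as in the contractivity step of Theorem~\ref{Thm:WellP_GFEIII_BKuznetsov} carried out in Appendix~\ref{Appendix:WellPproofKB}, forces $\bar\ppsi=0$. A subsequence-of-a-subsequence argument then promotes the subsequential convergence to convergence of the whole family $\{\ppsi^\tau\}_{\tau\in(0,\bar\tau]}$. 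I expect the only genuinely delicate point (everything else being bookkeeping) to be the gradient nonlinearity $\nabla\ppsi\cdot\nabla\ppsit$: it must be split as $\nabla\bar\ppsi\cdot\nabla\ppsit^\tau + \nabla\ppsi\cdot\nabla\bar\ppsi_t$ and each strongly vanishing gradient factor paired against a $\tau$-uniformly bounded one in $L^\infty(L^2)$, which is precisely why the stronger solution class $\calXBK$ (with $\ppsi,\ppsit\in L^\infty(\Honethree)$ and $\ppsitt\in L^2(\Honetwo)$) is needed here, in contrast to the Westervelt--Blackstock setting.
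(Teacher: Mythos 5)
Your proposal is correct and follows essentially the same route as the paper: uniform bounds from Theorem~\ref{Thm:WellP_GFEIII_BKuznetsov}, weak/weak-$\star$ compactness plus Aubin--Lions--Simon, passage to the limit in the weak form with the nonlinear differences split into a strongly vanishing factor times a uniformly bounded one, uniqueness of the limit via testing with $\bar{\ppsi}_{tt}$ as in the contractivity step, and a subsequence--subsequence argument. Your closing remark on the gradient nonlinearity $\nabla\bar\ppsi\cdot\nabla\ppsit^\tau+\nabla\ppsi\cdot\nabla\bar\ppsi_t$ is exactly the splitting the paper uses.
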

As a by-product of the previous analysis, we obtain unique solvability of \eqref{IBVP_limit_GFEIII_BK} for small data in $\Honethree \times \Honethree$.
\newcommand\pp{u} 
\newcommand\kthree{k_3}
\section{Testing with $\ppsi_{tt}$ and $-\D\frakKone\Lconv\ppsi_{tt}$ } \label{Sec:GFEI}
We have seen in Section~\ref{Sec:GFEIII} that among the Compte--Metzler laws, testing with $(-\Delta)^\nu\ppsi_{tt}$ leads to a $\tau$-uniform bound only in the \ref{fJMGTIII} case. Therefore in this section, we will investigate an alternative way of testing by somewhat weakening the time derivative in the multiplier: instead of $-\Delta\ppsi_{tt}$ we will use $-\frakKone\Lconv \Delta\ppsi_{tt}$. Complementary to this, we will also test with $\ppsi_{tt}$. \\
\indent This testing strategy turns out to be applicable to all three Compte--Metzler laws of interest, with caveats. The price to pay is that we have to restrict ourselves to  Westervelt--Blackstock nonlinearities, 
 unless dealing with the \ref{fJMGTI} equation with $\alpha\leq1/2$ (Case I below). 
 In this case we can prove existence of solutions but not uniqueness. \\
 \indent Besides, {when $\frakKone \in L^1(0,T)$}, for the analysis in this section to go through we need to rewrite the leading term in \eqref{general_wave_equation} and consider
 \begin{equation} \label{general_wave_equation_rewritten}
 \begin{aligned}
 	\begin{multlined}[t]
 		\taua (\frakKone *\ppsitt)_t+\aaa \ppsitt
 		-c^2 \bbb\Delta \ppsi - \taua c^2  \frakKone*\Delta\ppsit
 		- \ttbdelta   \frakKtwo*\Delta \ppsitt  +\calN= \taua \frakKone(t) \utt(0),
 	\end{multlined}
 \end{aligned}
\end{equation}
which will force us to assume $u_2=0$ (and thus have that the right-hand side above is zero). \\[2mm]
\noindent \emph{\bf Assumptions on the two kernels in this section.} 
In this section, we assume that 
\begin{equation}\label{Athree_general} 
	\text{there exists} \  \tfrakKone \in L^1(0,T), \text{ such that }\frakKone * \tfrakKone =1.   \tag{$\mathcal{H}_1$}
\end{equation}
{Note that unlike in Section~\ref{Sec:GFEIII}, here the resolvent is only required to be $L^1$ regular}. Furthermore, we need the following coercivity property of the kernel in the leading term:
 \begin{equation}\label{eqn:Alikhanov_1_testK1}
 	\int_0^t (\frakKone\Lconv y)'(s) \,y(s)\ds \geq - C_{\frakKone} |y(0)|^2, \quad y\in   \tag{${\bf \mathcal{H}}_2$}
X^2_{\frakKone}(0,t), 
 \end{equation}
for some (possibly large, but possibly also vanishing) constant $C_{\frakKone}$. We also assume that
\begin{equation}\label{eqn:coercivity_K1}
 	\int_0^t (\frakKone\Lconv y)(s) \,y(s)\ds \geq \Psi[y](t), \quad y\in L^2(0,t),   \tag{${\bf \mathcal{H}}_3$}
 \end{equation}
for some functional $\Psi$ to be specified more precisely later on. 
In particular, we will assume the energy contribution due to $\Psi$ to be quantified by an estimate in some Sobolev norm 
\begin{equation}\label{eqn:Psi_Sobolevnorms}   \tag{${\bf \mathcal{H}}_3 \ \Psi$}
\int_0^t\Psi[\nabla\ppsi_{tt}](s)\ds \sim \|\nabla\ppsi\|_{W^{\sigma,\rho}(0,t;L^2(\Omega))}^2
\end{equation}
for some $\sigma\in[0,2]$ and $\rho\in[2,\infty]$; see Table~\ref{table:GFElaws_pq_etc} for the Compte--Metzler laws.\\
\indent Furthermore,  we impose a coercivity condition on $\frakKtwo$: 
\begin{equation}\label{eqn:coercivity_K2}
	\int_0^t (\frakKtwo\Lconv y)(s)y(s)\ds \geq 0
, \quad y\in L^2(0,t) .  \tag{${\bf \mathcal{H}}_4$}
\end{equation}
\indent How the two kernels behave relative to each other will also be important. More precisely, besides the above assumptions, the key coercivity property that we assume in this section is the following:
\begin{equation}\label{eqn:coercivity_2_testK1}
	\int_0^t (\frakKtwo\Lconv y)(s)(\frakKone\Lconv y)(s)\ds 
\geq \Phi[y](t)
, \quad y\in L^2(0,t),  \tag{${\mathcal{H}}_5$}
\end{equation}
where $\Phi$ is a nonnegative functional satisfying 
\begin{equation}\label{eqn:coercivity_2_bothCases}
\sup_{t'\in(0,t)}\Phi[y](t') \geq
\underline{c}_2\max\Bigl\{\|\frakKone\Lconv 1\Lconv y\|_{L^2(0,t)}^2 ,\,\|1\Lconv 1\Lconv y\|_{L^2(0,t)}^2\Bigr\}
, \quad y\in L^2(0,t)   
\end{equation}
for some (possibly small) $\underline{c}_2>0$. Later on, we will assume the energy contribution due to $\Phi$ to be quantified by an estimate in some Sobolev norm 
\begin{equation}\label{eqn:PhiPsi_Sobolevnorms}  \tag{${\bf \mathcal{H}}_5 \ \Phi$}
\sup_{t'\in(0,t)}\Phi[\Delta\ppsi_{tt}](t')\sim \|\Delta\ppsi\|_{W^{s,\mathtt{r}}(0,t;L^2(\Omega))}^2,
\end{equation}
for some $s\in[0,2]$ and $\mathtt{r}\in[2,\infty]$; see Table~\ref{table:GFElaws_pq_etc}. Concerning the further behavior of $\Phi$, we distinguish between two cases. 

\subsubsection*{\textbf{Case I}: ``Kernel $\frakKtwo$ is more singular than $\frakKone$''} Here we assume that
\begin{equation}\label{eqn:coercivity_2_CaseI}  \tag{${\bf \mathcal{H}}_5 \ \textup{I}$}
\sup_{t'\in(0,t)}\Phi[y](t') \geq
\max\Bigl\{
\underline{c}_3 \|\frakKone\Lconv y\|_{L^2(0,t)}^2 ,\,
\overline{C}_3 \|1\Lconv1\Lconv y\|_{L^2(0,t)}^2\Bigr\}
, \quad y\in L^2(0,t)   
\end{equation}
for some positive constants $\underline{c}_3$ and $\overline{C}_3$ (the latter being assumed sufficiently large). 

\subsubsection*{\textbf{Case II}: ``Kernel $\frakKtwo$ 
is 
less singular than $\frakKone$''}  In this case we assume that
\begin{equation}\label{eqn:coercivity_2_CaseII} \tag{${\bf \mathcal{H}}_5 \ \textup{II}$}
\begin{aligned}
&\sup_{t'\in(0,t)}\Phi[y](t') \\
\geq&\,\overline{C}_3\max\Bigl\{ 
\|1\Lconv y\|_{L^p(0,t)}^2, \, 
\|1\Lconv 1\Lconv y\|_{L^\infty(0,t)}^2, \, 
\|\frakKone\Lconv 1\Lconv y\|_{L^\infty(0,t)}^2, 
\Bigr\},
\quad y\in L^2(0,t)   
\end{aligned}
\end{equation}
for some $p\in[1,\infty]$ and $\overline{C}_3>0$ sufficiently large.  Further,
\begin{equation}\label{eqn:cond_ab_CaseII}   \tag{${\bf \mathcal{H}}_5 \ \textup{II} \ \aaa, \bbb$}
\nabla\aaa=0, \quad \aaa_t=0, \quad
\bbb_t\in L^{\tilde{q}}(0,T;L^\infty(\Omega)),  
\end{equation}
where
\begin{equation}\label{eqn:cond_pq_CaseII}  \tag{${\bf \mathcal{H}}_5 \ \textup{II} \ \frakKone$}
\begin{aligned}
&\text{either }\frakKone=\delta_0\mbox{ and }\tilde{q}= p'=\frac{p}{p-1}\\
&\text{ or } \quad
\frakKone\in L^q(0,T), \quad q\leq p', \quad \tilde{q}=\frac{p'q}{p'q+q-p'}=\frac{pq}{2pq-q-p}
\end{aligned}
\end{equation}
with $p$ as in \eqref{eqn:coercivity_2_CaseII}. \\[1mm]
\noindent \textbf{Assumptions on the variable coefficients.}  We will need  the following smoothness and non-degeneracy assumptions:
\begin{equation}\label{eqn:cond_ab}
\aaa,\, \frac{1}{\aaa}, \, \bbb \, \in L^\infty(0,T;L^\infty(\Omega)), \quad
\|\aaa-\aaa_0\|_{L^\infty(L^\infty)}\leq \overline{c}_4
\end{equation}
for some constants $\aaa_0>0$ and $\overline{c}_4$, where the latter will be assumed to be small enough (independently of $\tau$); cf.\ \eqref{eqn:cond_a-a0_CaseI} for Case I. In Case II we will even need $\aaa-\aaa_0=0$.
\subsection{How to verify assumptions on the kernels}\label{sec:Howto_K1psitt}
As noted before, assumption \eqref{eqn:Alikhanov_1_testK1} is satisfied for all Compte--Metzler laws
considered due to \cite[Lemma B.1]{kaltenbacher2021determining}. Conditions \eqref{eqn:coercivity_K1} and \eqref{eqn:coercivity_K2} hold for the three laws considered in this work due to \cite[Lemma 2.3]{Eggermont1987} with the choice
\[
\frakKone=g_{\alpha_1}=\frac{1}{\Gamma(\alpha_1)}t^{\alpha_1-1}, \qquad \Psi[y]=\cos(\alpha_1\pi/2)\|y\|_{H^{-\alpha_1/2}(0,t)}, \qquad \frakKtwo=g_{\alpha_2}
\]  
with $\alpha_1$, $\alpha_2\, \in[0,1]$.  Also assumption \eqref{eqn:coercivity_2_testK1} which relates the two kernels is satisfied for all three Compte--Metzler laws of interest.
More precisely, we have
\[
\Phi[y] = 
\tilde{c}(\alpha_1,\alpha_2)\,  \cos((\alpha_2-\alpha_1)\pi/2) \|y\|_{H^{-(\alpha_2+\alpha_1)/2}(0,T)}.
\]

Condition \eqref{eqn:coercivity_2_CaseI} means that the damping by $\frakKtwo$ is at least as strong (in the sense of higher order of differentiation) as the one by $\frakKone$. However, among the Compte--Metzler laws it is only satisfied for the GFE I kernel with $\alpha\leq\frac12$.  

Condition \eqref{eqn:coercivity_2_CaseII} is satisfied for the GFE I kernel with $\alpha\geq\frac12$, as well as for the GFE III and GFE kernels. Largeness of the constant $\overline{C}_3$ in assumptions \eqref{eqn:coercivity_2_CaseI} and \eqref{eqn:coercivity_2_CaseII} can be achieved by using H\"older's inequality and making $T$ small enough.

Assumption \eqref{eqn:cond_pq_CaseII} holds for the GFE III kernel and, with any $q\in[1,\frac{1}{1-\alpha})$ for the GFE I and GFE kernels. 

The three Compte--Metzler laws and relevant further information are summarized in Table~\ref{table:GFElaws_pq_etc}, where we have also listed $s$, $\mathtt{r}$, $\sigma$, and $\rho$, such that \eqref{eqn:PhiPsi_Sobolevnorms} holds.

	\begin{table}[h]	
	\captionsetup{width=13cm}
	\centering
	\begin{adjustbox}{max width=\textwidth}
		\begin{tabular}{|c||c|c|c|c|c|c|c|c|c|c||}
			\hline
			Flux law  & $\alpha_1$ & $\alpha_2$  &$p$  &$p'$  &$q$ &$\tilde{q}$ &$s$ &$\mathtt{r}$ &$\sigma$ &$\rho$	\\
			\hline\hline
			GFE I\rule{0pt}{3ex} &${1-\alpha}$ &${\alpha}$ &
$<\infty$ &$1+\tilde{\iota}$ &$\frac{1}{\alpha}-\tilde{\tilde{\iota}}$ & $1$
&$\frac32$ &$2$ &$\frac{3+\alpha}{2}$ &$2$  \\
			GFE III\rule{0pt}{3ex} &$0$			&${\alpha}$	&
$\infty$ &$1$ &$1$ &$1$ &$2-\frac{\alpha}{2}$ &$2$ &$2$ &$2$ \\
			GFE \rule{0pt}{3ex} & ${1-\alpha}$	&$1$		&
$\frac{2}{1-\alpha}$ &$\frac{2}{1+\alpha}$ &$\frac{1}{\alpha}-\tilde{\tilde{\iota}}$ &$1$
&$1+\frac{\alpha}{2}$ &$2$ &$\frac{3+\alpha}{2}$ &$2$ \\
			\hline
		\end{tabular}
	\end{adjustbox}
	~\\[2mm]
	\caption{\small Kernels $\frakKone(t)=g_{\alpha_1}(t)$ and $\frakKtwo=g_{\alpha_2}$ for different examples of generalized flux laws; values of relevant quantities in the assumptions on kernels; here $\tilde{\iota},\, \tilde{\tilde{\iota}}>0$ can be arbitrarily small; in particular, they can be chosen such that $q=p'$ and thus $\tilde{q}=1$ in all cases} \label{table:GFElaws_pq_etc}
\end{table}

\subsection{Well-posedness of the linearized problem}

Again, we first present the main ideas of establishing well-posedness of the linearized equation
\begin{equation} \label{pde_testingK1_lin}
	\begin{aligned}
		\begin{multlined}[t]
			\taua (\frakKone\Lconv \ppsi_{tt})_t+\aaa\ppsi_{tt}
			-c^2\bbb\Delta \ppsi 
			- \taua c^2 \frakKone\Lconv \Delta \ppsi_t	-	\ttbdelta  \frakKtwo\Lconv \Delta \ppsi_{tt}=f,
		\end{multlined}
	\end{aligned}
\end{equation}
in particular, the energy estimates that will be needed for this purpose in each of the above-mentioned Cases I and II.

\smallskip

\noindent \emph{Testing with $\ppsitt$.} We start with the low-order energy estimate that is the same for both cases, obtained by testing with $\ppsi_{tt}$.
To this end, we interpret equation \eqref{pde_testingK1_lin} as 
\begin{equation}\label{eqn:pde-step2}			
\begin{aligned}
& \taua(\frakKone\Lconv \ppsi_{tt})_t
+\aaa\ppsi_{tt} 
- \ttbdelta  \frakKtwo\Lconv \Delta \ppsi_{tt}
= \bar{r}
\end{aligned}
\end{equation}
with the right-hand side
\[
\bar{r}=f+c^2\bbb\Delta \ppsi+ {\tau^a} c^2\frakKone\Lconv \Delta \ppsi_t
\]
and test it with $\ppsi_{tt}$.
By assumptions \eqref{eqn:Alikhanov_1_testK1} and \eqref{eqn:coercivity_K2}, we obtain
\[
\begin{aligned}
\int_0^t|\sqrt{\aaa}\ppsi_{tt}|_{L^2}^2\ds
\leq \left\|\frac{1}{\sqrt{\aaa}}r \right\|_{L^2_t(L^2)} \|\sqrt{\aaa}\ppsi_{tt}\|_{L^2_t(L^2)}
+C_{\frakKone}\|\ppsi_{tt}(0)\|_{L^2}^2.
\end{aligned}
\]
The $r$ term can be estimated as follows:
\[
\left\|\frac{1}{\sqrt{\aaa}}\bar{r} \right\|_{L^2_t(L^2)} \leq \left\|\frac{1}{\sqrt{\aaa}} \right\|_{L^\infty(L^\infty)}    \|\bar{r}\|_{L^2_t(L^2)} 
\]
with
\begin{equation}\label{eqn:est_r2}
\begin{aligned}
\|\bar{r}\|_{L^2_t(L^2)} 
\leq&\, \|f\|_{L^2_t(L^2)}
+c^2 \|\bbb\|_{L^\infty(L^\infty)} \|\Delta \ppsi\|_{L^2_t(L^2)}
+{\tau^a} c^2\|\frakKone\Lconv \Delta \ppsi_t\|_{L^2_t(L^2)}\\
\leq&\, \begin{multlined}[t] \|f\|_{L^2_t(L^2)} 
 +c^2 \|\bbb\|_{L^\infty(L^\infty)} (t^{1/2}\|\Delta \ppsi(0)\|_{L^2}+ t^{3/2} \|\Delta \ppsi_t(0)\|_{L^2}) \\+ c^2\tau^a t^{1/2} \|\frakKone\|_{\mathcal{M}(0,t)} \|\Delta \ppsi_t(0)\|_{L^2}
+c^2 (\|\bbb\|_{L^\infty(L^\infty)}+ {\tau^a})/\sqrt{\underline{c}_2}\ \sqrt{\Phi[\Delta\ppsi_{tt}]}, \end{multlined}
\end{aligned}
\end{equation}
where we have relied on \eqref{eqn:coercivity_2_bothCases}. Altogether,
\begin{equation}\label{eqn:est-step2}
\begin{aligned}
\|\ppsi_{tt}\|_{L^2_t(L^2)}^2
\lesssim& \, \begin{multlined}[t]
\|f\|_{L^2(L^2)}^2 + \Phi[\Delta\ppsi_{tt}]
+C_{\frakKone}\|\ppsi_{tt}(0)\|_{L^2}^2
 + t\|\Delta \ppsi(0)\|_{L^2}^2\\+ t^3 \|\Delta \ppsi_t(0)\|_{L^2}^2. \end{multlined}
\end{aligned}
\end{equation}
We keep track of $C_{\frakKone}$ here because it comes with high-order initial data but is nonzero only in the exceptional case $\frakKone=\delta_0$. Later on we will see that for a different reason we will anyway have to assume $\ppsi_{tt}(0)=0$.
The hidden constant in \eqref{eqn:est-step2} is independent of both $T$ and $\tau$.

\smallskip

\noindent \emph{Testing with $- \frakKone*\Delta \ppsitt$.} The higher-order estimate is obtained by rearranging equation \eqref{pde_testingK1_lin} as 
\begin{equation}\label{eqn:pde-step1}			
\begin{aligned}
&{\tau^a}(\frakKone\Lconv \ppsi_{tt})_t
+\aaa_0\ppsi_{tt} - {\tau^a} c^2\frakKone\Lconv \Delta \ppsi_t
- \ttbdelta  \frakKtwo\Lconv \Delta \ppsi_{tt}
= \bar{r}
\end{aligned}
\end{equation}
for $\aaa_0>0$ with now
\[
\bar{r}=f-(\aaa-\aaa_0)\ppsi_{tt}+c^2\bbb\Delta \ppsi
\]
and testing it with \[-\frakKone\Lconv \Delta \ppsi_{tt}=-(\frakKone\Lconv \Delta \ppsi_t)_t+\frakKone\cdot\Delta \ppsi_t(0).\] Using assumption \eqref{eqn:coercivity_K1} with $y=\nabla \ppsi_{tt}(x)$ as well as \eqref{eqn:coercivity_2_testK1} with $y=-\Delta \ppsi_{tt}(x)$ yields
\begin{equation}\label{eqn:pde_step1}
\begin{aligned}
&\frac12 \tau^a\Bigl[\|\frakKone\Lconv \nabla\ppsi_{tt}\|_{L^2}^2+c^2\|\frakKone\Lconv \Delta\ppsi_t\|_{L^2}^2\Bigr]_0^t 
+ \aaa_0 \int_0^t \Psi[\nabla\ppsi_{tt}](s)\ds
+ \ttbdelta \Phi[\Delta \ppsi_{tt}](t)\\
\leq&\, 
\text{rhs} + \tau^a c^2 \int_0^t\int_\Omega \frakKone\,\Delta\ppsi_t(0)\,\frakKone\Lconv \Delta \ppsi_t\dxs, 
\end{aligned}
\end{equation}
where
\begin{align} \label{rhs}
 \text{rhs}:= \int_0^t\int_\Omega (f-(\aaa-\aaa_0)\ppsi_{tt}+c^2\bbb\Delta\ppsi)(-\frakKone\Lconv \Delta \ppsi_{tt})\dxs.
\end{align}
We can estimate the second term on the right in \eqref{eqn:pde-step1} as follows: 
\[
\begin{aligned}
\Bigl|\int_0^t\int_\Omega \frakKone\,\Delta\ppsi_t(0)\,\frakKone\Lconv \Delta \ppsi_t\dxs\Bigr|
\leq \|\frakKone\|_{{\mathcal{M}}(0,t)} \|\Delta\ppsi_t(0)\|_{L^2} \|\frakKone\Lconv \Delta \ppsi_{t}\|_{L^\infty_t(L^2)}.
\end{aligned}
\]
We distinguish between two cases in the further treatment of the right-hand side.

\medskip
 
\noindent
\textbf{Case I}:
Due to \eqref{eqn:coercivity_2_CaseI}, assumptions \eqref{eqn:cond_ab} on the variable coefficients, and Young's inequality, as well as the elementary estimate
\[
\|\frakKone\Lconv\Delta\ppsi_t \|_{L^\infty_t(L^2)}\leq
\| (\frakKone\Lconv\Delta\ppsi_t)(0)\|_{L^2}+ \sqrt{t}\|(\frakKone\Lconv \Delta \ppsi_t)_t\|_{L^2_t(L^2)},
\]
all terms containing $\ppsi$ on the right-hand side of \eqref{eqn:pde-step1} can be estimated by means of $\Phi(\Delta\ppsi_{tt})$ and the already obtained $L^2(0,t; L^2(\Omega))$ estimate of $\ppsi_{tt}$. Indeed, 
\begin{equation}\label{eqn:est_step1}
\begin{aligned}
\text{rhs}\leq& \, \begin{multlined}[t] \epsilon \|\frakKone\Lconv \Delta \ppsi_{tt}\|_{L^2_t(L^2)}^2
+\frac{1}{\epsilon} \|f\|_{L^2_t(L^2)}^2
+\frac{1}{\epsilon} \|\aaa-\aaa_0\|_{L^\infty_t(L^\infty)}^2 \|\ppsi_{tt}\|_{L^2_t(L^2)}^2\\
+\frac{1}{\epsilon} c^2\|\bbb\|_{L^\infty_t(L^\infty)}^2 \|\Delta\ppsi \|_{L^2_t(L^2)}^2
+\mu \|\frakKone\Lconv\Delta\ppsi_t \|_{L^\infty_t(L^2)}^2 \end{multlined} \\ 
\leq&\, \begin{multlined}[t] \Bigl(\frac{\epsilon +2t\mu}{\underline{c}_3}+ \frac{c^2\|\bbb\|_{L^\infty(L^\infty)}^2}{\epsilon\overline{C}_3}\Bigr) \Phi(\Delta\ppsi_{tt})
+ \frac{1}{\epsilon} \|f\|_{L^2_t(L^2)}^2 
+\frac{1}{\epsilon} \|\aaa-\aaa_0\|_{L^\infty_t(L^\infty)}^2 \|\ppsi_{tt}\|_{L^2_t(L^2)}^2\\
+ 2\mu \|(\frakKone\Lconv\Delta\ppsi_t)(0)\|_{L^2}^2. \end{multlined}
\end{aligned}
\end{equation}
By first choosing $\epsilon>0$ sufficiently small and then assuming $\overline{C}_3$ to be sufficiently large {(which might necessitate a decrease of $T$, see the comment on largeness of $\overline{C}_3$ in Subsection~\ref{sec:Howto_K1psitt})}, we can achieve 
\[\frac{\epsilon +2t\mu}{\underline{c}_3}+ \frac{ c^2\|\bbb\|_{L^\infty(L^\infty)}^2}{\epsilon\overline{C}_3} \leq \frac{\ttbdelta}{2}\] and therefore end up with 
\begin{equation}
\begin{aligned}
&\frac12 \tau^a\Bigl[\|\frakKone\Lconv \nabla\ppsi_{tt}\|_{L^2}^2+c^2\|\frakKone\Lconv \Delta\ppsi_t\|_{L^2}^2\Bigr]_0^t 
+ \aaa_0 \int_0^t \Psi[\nabla\ppsi_{tt}](s)\ds
+ \frac{\ttbdelta}{2} \Phi[\Delta \ppsi_{tt}](t)\\
\leq&\, \begin{multlined}[t] C(t) \left(
\|f\|_{L^2(L^2)}^2 
+\|\aaa-\aaa_0\|_{L^\infty_t(L^\infty)}^2 \|\ppsi_{tt}\|_{L^2(L^2)}^2
+\|(\frakKone\Lconv\Delta\ppsi_t)(0)\|_{L^2}^2 \right. \\ \left.
+\tau^{2a}\|\frakKone\|_{{\mathcal{M}}(0,T)}^2 \|\Delta\ppsi_t(0)\|_{L^2}^2 \right), \end{multlined}
\end{aligned}
\end{equation}
where $C(t)>0$ might depend on $t$ but not on $\tau$.
Combining this bound with estimate \eqref{eqn:est-step2} where $C>0$ is the hidden constant and assuming 
\begin{equation}\label{eqn:cond_a-a0_CaseI}
C\,C(t)\|\aaa-\aaa_0\|_{L^\infty_t(L^\infty)}^2<\frac{\ttbdelta}{8},
\end{equation}
we arrive at the $\tau$- uniform estimate
\begin{equation}\label{eqn:enest_caseI}
\begin{aligned}
&
\begin{multlined}[t] \taua\Bigl[\|\frakKone\Lconv \nabla\ppsi_{tt}\|_{L^2}^2+c^2\|\frakKone\Lconv \Delta\ppsi_t\|_{L^2}^2\Bigr]_0^t + \int_0^t \int_\Omega\nabla\ppsi_{tt} \, \frakKone\Lconv \nabla \ppsi_{tt}\dxs\\
+ \|\ppsi_{tt}\|_{L^2_t(L^2)}^2 
+\int_0^t \Psi[\nabla\ppsi_{tt}](s)\ds
+ \sup_{t'\in(0,t)}\Phi[\Delta \ppsi_{tt}](t') \end{multlined} \\
\lesssim_T&\, \begin{multlined}[t] 
\|f\|_{L^2(L^2)}^2 
+\|(\frakKone\Lconv\Delta\ppsi_t)(0)\|_{L^2}^2
+\tau^{2a}\|\frakKone\|_{{\mathcal{M}}(0,t)}^2 \|\Delta\ppsi_t(0)|_{L^2}^2
+ C_{\frakKone}\|\ppsi_{tt}(0)\|_{L^2}^2 \\
 + \|\Delta \ppsi(0)\|_{L^2}^2+  \|\Delta \ppsi_t(0)\|_{L^2}^2.
\end{multlined}
\end{aligned}
\end{equation}
Recall that we assume $\ttbdelta$ to be fixed while $\tau\in(0,\overline{\tau}]$ might be arbitrarily small.

\medskip

\noindent
\textbf{Case II}:
Here the $\aaa-\aaa_0$ term in \eqref{rhs} cannot be estimated by means of $\Phi[\Delta \ppsi_{tt}]$ any more due to the factor $\frakKone\Lconv\Delta \ppsi_{tt}$. We therefore have to assume $\aaa-\aaa_0$ to vanish, that is, $\aaa$ to be constant; cf.\ \eqref{eqn:cond_ab_CaseII}. \\
\def\ftil{h}
\indent The further estimate of
\begin{align} \label{rhs_new}
	\text{rhs}= \int_0^t\int_\Omega (f+c^2\bbb\Delta\ppsi)(-\frakKone\Lconv \Delta \ppsi_{tt})\dxs
\end{align}
 becomes somewhat complicated since now $\Phi[\Delta \ppsi_{tt}]$ cannot dominate the $L^2(L^2)$ norm of the multiplier $\frakKone\Lconv\Delta \ppsi_{tt}$. We estimate $\ftil:=f+c^2\bbb\Delta\ppsi$  using the integration by parts and transposition identities involving kernels
\begin{equation}\label{integration_by_parts_transposition_identity}
\begin{aligned}
&\int_0^T y_t(T-t)w(t) \dt =\int_0^T y(T-t) w_t(t) \dt- w(T)y(0)+w(0)y(T),\ y, w \in W^{1,1}(0,T)\\
&\int_0^T (\frakKone*y)(T-t) w(t) \dt=\int_0^T (\frakKone*w)(t)y(T-t)\dt, \ y, w \in L^1(0,T),
\end{aligned}
\end{equation}
as well as the timeflip operator defined by $\overline{\ftil}^{t}(s)=\ftil(t-s)$, and the identity 
\[
(\overline{a}^{t}, b)_{L^2(0,t)} = (a*b)(t)= (b*a)(t).
\]
In this manner, we obtain the following identities:
\begin{equation}
\begin{aligned}
&\Bigl|\int_0^t\int_\Omega \ftil(s) \, (\frakKone\Lconv \Delta \ppsi_{tt})(s)\dxs\Bigr|\\
=&\, \Bigl|\int_0^t\int_\Omega \overline{\ftil}^t(t-s) \, ((\frakKone\Lconv \Delta \ppsi_t)_t)(s)-\frakKone(s)\Delta\ppsi_t(0))\dxs\Bigr|\\
=&\, \Bigl|\int_\Omega \Bigl( \Delta \ppsi_t \Lconv \frakKone\Lconv (\overline{\ftil}^t)_t\Bigr)(t)
+\left[\overline{\ftil}^t(t-s)\, (\frakKone\Lconv \Delta \ppsi_t)(s)\right]_{s=0}^t-\int_0^t\overline{\ftil}^t(t-s)\,\frakKone(s)\Delta\ppsi_t(0)\dxs\Bigr|
\end{aligned}
\end{equation}
and thus
\begin{equation}
	\begin{aligned}
		&\Bigl|\int_0^t\int_\Omega \ftil(s) \, (\frakKone\Lconv \Delta \ppsi_{tt})(s)\dxs\Bigr|\\
		=&\,\begin{multlined}[t] \Bigl|\int_0^t\int_\Omega \Delta \ppsi_t(s) \, (\frakKone\Lconv (\overline{\ftil}^t)_t)(t-s)\dxs\\
			+\int_\Omega\Bigl(
			\ftil(t)(\frakKone\Lconv \Delta \ppsi_t)(t)
			-\ftil(0)(\frakKone\Lconv \Delta \ppsi_t)(0)
			-\Delta \ppsi_t(0) \int_0^t\frakKone(s)\ftil(s)\ds
			\Bigr)\dx\Bigr|.
		\end{multlined}
	\end{aligned}
\end{equation}
From here we have
\begin{equation}\label{eqn:est_step1_caseII}
	\begin{aligned}
		&\Bigl|\int_0^t\int_\Omega \ftil(s) \, (\frakKone\Lconv \Delta \ppsi_{tt})(s)\dxs\Bigr|\\
		\leq&\, 
		\, \begin{multlined}[t] \|\Delta\ppsi_t\|_{L^p_t(L^2)} \|\frakKone\Lconv (\overline{\ftil}^t)_t\|_{L^{p'}_t(L^2)}
			+\|\ftil\|_{L^\infty_t(L^2)} \|\frakKone\Lconv \Delta\ppsi_t\|_{L^\infty_t(L^2)}\\
			+\|\ftil(0)\|_{L^2} \|(\frakKone\Lconv \Delta\ppsi_t)(0)\|_{L^2}
			+\|\Delta\ppsi_t(0)\|_{L^2}
			\|\ftil\|_{L^\infty_t(L^2)} \|\frakKone\|_{{\mathcal{M}}(0,t)}
		\end{multlined}
	\end{aligned}
\end{equation}
for $p\in[1,\infty]$ as in \eqref{eqn:coercivity_2_CaseII} and $p'=\frac{p}{p-1}$. Further, by Young's convolution inequality
\begin{equation}\label{eqn_K1Lq}
\begin{aligned}
&\|\frakKone\Lconv (\overline{\bbb\Delta\ppsi}^t)_t\|_{L^{p'}_t(L^2)} 
\leq\, \|\frakKone\|_{L^q(0,t)} \|\bbb_t\Delta\ppsi+\bbb\Delta\ppsi_t\|_{L^{\tilde{q}}_t(L^2)}\\
\leq&\, \|\frakKone\|_{L^q(0,t)} 
\left(\|\bbb_t\|_{L^{\tilde{q}}_t(L^\infty)}\|\Delta\ppsi\|_{L^{\infty}_t(L^2)}+\|\bbb\|_{L^{\infty}_t(L^\infty)}\Delta\ppsi_t\|_{L^{\tilde{q}}_t(L^2)}\right),
\end{aligned}
\end{equation}
where $\tilde{q}=\frac{p'q}{p'q+q-p'}$ so that $1+\frac{1}{p'}=\frac{1}{q}+\frac{1}{\tilde{q}}$ and $q\leq p'$ as in \eqref{eqn:coercivity_K1}.
Note that estimate \eqref{eqn_K1Lq} is the general kernel substitute for the Kato--Ponce inequality used in \cite{kaltenbacher2022time} to analyze the fJMGT equations.
In case $\frakKone=\delta_0$, \eqref{eqn_K1Lq} remains valid with $\|\frakKone\|_{L^q(0,t)}$ replaced by one and $\tilde{q}=p'$.

Estimating further by means of assumption \eqref{eqn:coercivity_2_CaseII} (which again might require a decrease of $T$ to achieve $\overline{C}_3$ to be large enough) yields the $\tau$-uniform estimate:
\begin{equation}\label{eqn:enest_caseII}
\begin{aligned}
&
\begin{multlined}[t] \tau^a\Bigl[\|\frakKone\Lconv \nabla\ppsi_{tt}\|_{L^2}^2+c^2\|\frakKone\Lconv \Delta\ppsi_t\|_{L^2}^2\Bigr]_0^t + \int_0^t \int_\Omega\nabla\ppsi_{tt} \, \frakKone\Lconv \nabla \ppsi_{tt}\dxs\\
+ \|\ppsi_{tt}\|_{L^2_t(L^2)}^2 
+\int_0^t \Psi[\nabla\ppsi_{tt}](s)\ds
+ \sup_{t'\in(0,t)}\Phi[\Delta \ppsi_{tt}](t') \end{multlined}\\
\lesssim_T&\, \begin{multlined}[t] 
\|f\|_{L^\infty(L^2)}^2 
+\|f_t\|_{L^{\tilde{q}}_t(L^2)}^2 
+\|\Delta\ppsi_t(0)\|_{L^2}^2
+\|\Delta\ppsi(0)\|_{L^2}^2
+C_{\frakKone}\|\ppsi_{tt}(0)\|_{L^2}^2,
\end{multlined}
\end{aligned}
\end{equation}
again assuming $\ttbdelta$ to be fixed while $\tau\in(0,\overline{\tau}]$ might be arbitrarily small. As a consequence, the natural solution space here is  
\begin{equation}\label{eqn:defX_testK1}
	\begin{aligned}
		\mathcal{X}^\tau= \left\{\vphantom{W} \right.\ppsi \in& W^{s,\mathtt{r}}(0,T; \Honetwo)\cap  W^{\sigma,\rho}(0,T; \Honezero):\\
		&\ \ppsi_{tt} \in L^2(0,T; \Ltwo), 
 \, \ppsi_{t},\, \Delta\ppsi\in X^\infty_{\frakKone}(0,T; \Ltwo)\vphantom{W} \left \}  \vphantom{W} \right.
	\end{aligned}
\end{equation}
and the $\tau$ uniform solution space here is
\begin{equation}\label{eqn:defX_testK1_uniform}
	\begin{aligned}
		\mathcal{X}= \{\ppsi \in W^{s,\mathtt{r}}(0,T; \Honetwo)&\cap W^{\sigma,\rho}(0,T; \Honezero):\\ &\, \ppsi_{tt} \in L^2(0,T; \Ltwo)\},
	\end{aligned}
\end{equation}
where the space $X^\infty_{\frakKone}(0,t')$ is defined in \eqref{def_Xfp}.

As previously mentioned, since we have used $(\frakKone\Lconv \ppsi_{tt})_t$ in place of $\frakKone\Lconv \ppsi_{ttt}$ in the derivation of both energy estimates \eqref{eqn:enest_caseI} and \eqref{eqn:enest_caseII} above, we have to add $\frakKone(s)\cdot\ppsi_{tt}(0)$ to the right-hand side. Consequently we would have to assume $\frakKone\in L^2(0,T)$ in Case I and even $\frakKone\in L^\infty(0,T)\cap W^{1,\tilde{q}}(0,T)$ in Case II. To avoid this, we impose the condition $\ppsi_2=0$ in the upcoming analysis.

\begin{proposition}\label{Prop:Wellp_testingK1_lin}
	Let $T>0$ and $\tau \in (0, \bar{\tau}]$ for some fixed $\bar{\tau}>0$. Let assumptions \eqref{reg_kernels}  and
\eqref{Athree_general}--
\eqref{eqn:PhiPsi_Sobolevnorms} on the kernels hold, as well as 
\[\ppsi_0,\, \ppsi_1\, \in \Honetwo, \quad \ppsi_2=0\]
and the following assumptions on the kernels, coefficients $\aaa$, $\bbb$, and the right-hand side:
\begin{enumerate}
\item[(I)] \eqref{eqn:coercivity_2_CaseI}, \eqref{eqn:cond_a-a0_CaseI}, 
$f\in L^2(0,T;\Ltwo)$, 
or
\item[(II)] \eqref{eqn:coercivity_2_CaseII}, \eqref{eqn:cond_ab_CaseII}, \eqref{eqn:cond_pq_CaseII}, 
$f\in L^\infty(0,T;\Ltwo)\cap W^{1,\tilde{q}}(0,T;\Ltwo)$
\end{enumerate}
with $\tilde{q}$ as in \eqref{eqn:cond_pq_CaseII}.
Then there exists a unique solution $\ppsi \in \calX^\tau\subseteq\calX$ of the initial boundary-value problem
	\begin{equation} \label{IBVP_TestingK1_lin}
	\left \{	\begin{aligned}
	&\begin{multlined}[t]	\taua  (\frakKone * \ppsi_{tt})_t+\aaa(x,t) \ppsi_{tt}
	-c^2 \bbb(x,t) \Delta \ppsi
	-   {\tau^a} c^2 \frakKone* \Delta \ppsi_{t} -\ttbdelta  \frakKtwo* \Delta \ppsi_{tt}=f(x,t),\end{multlined}\\
	&\ppsi\vert_{\partial \Omega}=0, \\
	&	(\ppsi, \ppsi_{t}, \ppsi_{tt})\vert_{t=0} =(\ppsi_0, \ppsi_1, \ppsi_2) .
	\end{aligned} \right. 
	\end{equation}
	The solution satisfies the following estimate:     
\begin{equation}
\begin{aligned}
\|\ppsi\|_{\calX}^2\lesssim_T &
&\begin{cases}
\|f\|_{L^2(L^2)}^2 
+\|\Delta\ppsi(0)\|_{L^2}^2
+\|\Delta\ppsi_t(0)\|_{L^2}^2
+\tau^{2a}\|\frakKone\|_{{\mathcal{M}}(0,T)}^2 \|\Delta\ppsi_t(0)\|_{L^2}^2
& (\textup{I}),
\\
\|f\|_{L^\infty(L^2)}^2 
+\|f_t\|_{L^{\tilde{q}}(L^2)}^2 +\|\Delta\ppsi(0)\|_{L^2}^2
+\|\Delta\ppsi_t(0)\|_{L^2}^2
&(\textup{II}).
\end{cases}
\end{aligned}
\end{equation}
\end{proposition}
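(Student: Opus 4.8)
The plan is to run the same Faedo--Galerkin scheme as in Proposition~\ref{Prop:Wellp_GFEIII_BWest_lin}, but now carried out with the multipliers $\ppsi_{tt}$ and $-\frakKone\Lconv\Delta\ppsi_{tt}$ that were used in the two energy estimates derived above. First I would construct, for each $n$, an approximate solution $\ppsin\in W^{2,\infty}(0,T;V_n)$ with $V_n$ spanned by the first $n$ eigenfunctions of the Dirichlet--Laplace operator (so that $\Delta\ppsin$, $\Delta\ppsint$, $\Delta\ppsintt$ vanish on $\partial\Omega$ and the testing steps are legitimate), solving the semi-discrete version of \eqref{IBVP_TestingK1_lin} written in the rewritten form \eqref{general_wave_equation_rewritten}, i.e.\ with $\taua(\frakKone\Lconv\ppsi_{tt})_t$ in place of $\taua\frakKone\Lconv\ppsi_{ttt}$; since $\ppsi_2=0$ the ensuing right-hand side $\taua\frakKone(t)\ppsi_{tt}(0)$ vanishes. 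Solvability of the resulting finite-dimensional Volterra integro-differential system on $V_n$ (with data the $V_n$-projections of $(\ppsi_0,\ppsi_1,0)$) is routine; the details mirror Appendix~\ref{Appendix:Galerkin}.

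The crucial step is the $n$- and $\tau$-uniform a priori bound. Here I would simply reproduce on $V_n$ the two computations of this subsection: testing with $\ppsintt$ and invoking \eqref{eqn:Alikhanov_1_testK1}, \eqref{eqn:coercivity_K2} yields the analogue of \eqref{eqn:est-step2}, while testing with $-\frakKone\Lconv\Delta\ppsintt$ and invoking \eqref{eqn:coercivity_K1}, \eqref{eqn:coercivity_2_testK1} together with \eqref{eqn:coercivity_2_bothCases} and either \eqref{eqn:coercivity_2_CaseI} or \eqref{eqn:coercivity_2_CaseII} (and, in Case~II, the Young-type estimate \eqref{eqn_K1Lq} that replaces Kato--Ponce, using \eqref{eqn:cond_ab_CaseII}, \eqref{eqn:cond_pq_CaseII}) yields the analogue of \eqref{eqn:enest_caseI} resp.\ \eqref{eqn:enest_caseII}. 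The smallness condition \eqref{eqn:cond_a-a0_CaseI} in Case~I, resp.\ the constancy of $\aaa$ from \eqref{eqn:cond_ab_CaseII} in Case~II, is exactly what allows absorbing the $\|\ppsintt\|_{L^2(L^2)}$ contribution of the $(\aaa-\aaa_0)\ppsintt$ term into the left-hand side. Because $\ppsin_2=0$, the terms $C_{\frakKone}\|\ppsintt(0)\|_{L^2}^2$ drop out, and the remaining initial-data terms are controlled by $\|\ppsi_0\|_{H^2}^2+\|\ppsi_1\|_{H^2}^2$ (plus $\tau^{2a}\|\frakKone\|_{\mathcal{M}}^2\|\Delta\ppsi_1\|_{L^2}^2$ in Case~I), uniformly in $n$. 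Combined with \eqref{eqn:Psi_Sobolevnorms} and \eqref{eqn:PhiPsi_Sobolevnorms}, this bounds $\{\ppsin\}$ in $W^{s,\mathtt{r}}(0,T;\Honetwo)\cap W^{\sigma,\rho}(0,T;\Honezero)$ with $\ppsintt$ bounded in $L^2(0,T;\Ltwo)$, i.e.\ in $\calX$; it further bounds $\ppsint$ and $\Delta\ppsin$ in $X^\infty_{\frakKone}(0,T;\Ltwo)$ (recall \eqref{eqn:defX_testK1}) and $\taua\frakKone\Lconv\nabla\ppsintt$, $\taua c^2\frakKone\Lconv\Delta\ppsint$ in $L^\infty(0,T;\Ltwo)$. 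As in \eqref{Linf_bound_untt}--\eqref{Linf_bound_utt}, using $\tfrakKone\in L^1(0,T)$ one then also obtains a bound on $\taua\ppsintt$ in, say, $L^\infty(0,T;\Hneg)$.

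From these bounds I would extract a subsequence with $\ppsin\rightharpoonup\ppsi$ weakly-$\star$ in the relevant Bochner spaces, $\ppsintt\rightharpoonup\ppsitt$ weakly in $L^2(L^2)$, and — via Lemma~\ref{Lemma:Caputo_seq_compact} applied with $\frakK=\frakKone$, $\tfrakK=\tfrakKone$, $p=\infty$, $p'=1$ to the sequences $\ppsint$ and $\Delta\ppsin$ in $X^\infty_{\frakKone}$ (when $\frakKone=\delta_0$ this step reduces to a standard Aubin--Lions argument), together with Young's convolution inequality for the $\frakKtwo\Lconv\Delta\ppsintt$ term — convergence of the nonlocal quantities $\frakKone\Lconv\Delta\ppsint$, $\taua(\frakKone\Lconv\ppsintt)_t$, and $\frakKtwo\Lconv\Delta\ppsintt$ in appropriate weak topologies. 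Strong convergence of $\ppsin,\ppsint$ in $C([0,T];\Honezero)$ (Aubin--Lions--Simon, as in \eqref{weak_limits_2_tau_GFEIII_West}) then allows passage to the limit in the semi-discrete weak formulation tested against $v\in C^1([0,T];\Honezero)$ with $v(T)=v_t(T)=0$, identifies the limit equation in the rewritten form, and gives $(\ppsi,\ppsit)\vert_{t=0}=(\ppsi_0,\ppsi_1)$; the condition $\ppsi_{tt}(0)=0$ is consistent with the rewriting \eqref{general_wave_equation_rewritten} and the $L^\infty$-in-time control of $\taua\ppsi_{tt}$ in $\Hneg$. The asserted estimate is the weak lower-semicontinuity limit of \eqref{eqn:enest_caseI}/\eqref{eqn:enest_caseII} after applying \eqref{eqn:Psi_Sobolevnorms}, \eqref{eqn:PhiPsi_Sobolevnorms}. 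Uniqueness is immediate by linearity: the difference of two solutions solves \eqref{IBVP_TestingK1_lin} with $f=0$ and zero data, so the same energy estimate forces it to vanish.

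The main obstacle I expect is the limit passage in the two nonlocal terms $\taua(\frakKone\Lconv\ppsi_{tt})_t$ and $\ttbdelta\frakKtwo\Lconv\Delta\ppsi_{tt}$: the first because $\frakKone$ is merely $L^1$ (or $\delta_0$) with only an $L^1$-resolvent available, so one cannot differentiate directly and must route the argument through the rewritten form \eqref{general_wave_equation_rewritten} and Lemma~\ref{Lemma:Caputo_seq_compact}; the second because $\Delta\ppsi_{tt}$ lives only in a negative-order space, so $\frakKtwo\Lconv\Delta\ppsi_{tt}$ has to be interpreted in a sufficiently weak dual and tested only against smooth $v$. Some care is also needed to check membership $\ppsi\in\calX^\tau$, i.e.\ that $\ppsi_t,\Delta\ppsi\in X^\infty_{\frakKone}$, which again follows from the uniform bounds and weak sequential compactness of $X^\infty_{\frakKone}$.
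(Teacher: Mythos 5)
Your existence argument follows the paper's proof essentially verbatim: Faedo--Galerkin on the eigenfunction basis for the rewritten equation \eqref{general_wave_equation_rewritten}, the two testings with $\ppsintt$ and $-\frakKone\Lconv\Delta\ppsintt$ reproducing \eqref{eqn:est-step2} and \eqref{eqn:enest_caseI}/\eqref{eqn:enest_caseII} uniformly in $n$ and $\tau$, weak-$\star$ lower semicontinuity of the Sobolev norms from \eqref{eqn:Psi_Sobolevnorms}, \eqref{eqn:PhiPsi_Sobolevnorms}, and Lemma~\ref{Lemma:Caputo_seq_compact} with $p=\infty$ (hence $\tfrakKone\in L^1$ from \eqref{Athree_general}) for the nonlocal terms. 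That part is fine.

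The gap is in your uniqueness step. You assert that the difference of two solutions ``solves the problem with $f=0$ and zero data, so the same energy estimate forces it to vanish,'' but the energy estimate \eqref{eqn:enest_caseI}/\eqref{eqn:enest_caseII} was derived by testing with $-\frakKone\Lconv\Delta\ppsi_{tt}$, and this multiplier is only admissible for the spatially smooth Galerkin approximants. A general solution in $\calX^\tau$ merely has $\ppsi_{tt}\in L^2(0,T;L^2(\Omega))$ and $\Delta\ppsi\in X^\infty_{\frakKone}$, i.e.\ $\frakKone\Lconv\Delta\ppsi_t\in L^\infty(0,T;L^2(\Omega))$; the quantity $\frakKone\Lconv\Delta\ppsi_{tt}$ is not available as an $L^2$ function, so the estimate cannot simply be ``applied'' to an arbitrary solution of the homogeneous problem --- that is precisely the statement one needs to prove. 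The paper closes this by integrating the homogeneous PDE once in time and testing the integrated equation with the time-integrated multipliers $\ppsi_t$ and $-\frakKone\Lconv\Delta\ppsi_t$, both of which are legitimate for elements of $\calX^\tau$ (this is also why the paper records the extra $X^\infty_{\frakKone}$ regularity in $\calX^\tau$, obtained via Lemma~\ref{Lemma:Caputo_seq_compact}: it is exactly what licenses $\Delta\ppsi_t$, resp.\ $\frakKone\Lconv\Delta\ppsi_t$, as a multiplier). You should replace your one-line uniqueness claim with this time-integration argument, or otherwise justify a density/regularization step that makes the original multipliers admissible.
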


\begin{proof}
The proof follows by a Faedo--Galerkin procedure, where Appendix~\ref{Appendix:Galerkin} can be used to establish well-posedness of the ODEs resulting from Galerkin semidiscretization. The testing strategies shown above and resulting in \eqref{eqn:enest_caseI}, \eqref{eqn:enest_caseII} are then applied to the Galerkin solutions in place of $\ppsi$. \\
\indent 
Stating the energy estimates in terms of Sobolev norms $\|\ppsi_{tt}\|$, $\|\nabla\ppsi\|_{W^{s,r}(0,t)}$, and $\|\Delta\ppsi\|_{W^{\sigma,\rho}(0,t)}$ (cf. \eqref{eqn:PhiPsi_Sobolevnorms}), we can rely on weak$-*$ lower continuity of these norms when taking weak limits. Moreover, we use Lemma~\ref{Lemma:Caputo_seq_compact} with $p=\infty$ to also take limits in the $\tau^a$ terms of \eqref{eqn:enest_caseI} and \eqref{eqn:enest_caseII}. This is useful for enabling the use of $\Delta\ppsi_t$ as a multiplier in the uniqueness proof. \\[1mm]
\noindent \emph{Uniqueness.}
Since $\ppsi\in\mathcal{X}^\tau$, with $\mathcal{X}^\tau$ defined in \eqref{eqn:defX_testK1}, we can test the time integrated homogeneous PDE with vanishing $f$ and initial data (not only its Galerkin semidiscretization) in the way described above (that is, testing with the time integrated versions $-\frakKone\Lconv\Delta\ppsi_{t}$, $\ppsi_{t}$ of $-\frakKone\Lconv\Delta\ppsi_{tt}$, $\ppsi_{tt}$) and analogously obtain from \eqref{eqn:enest_caseI}, \eqref{eqn:enest_caseII}  that its solution needs to vanish. 
\end{proof}

\subsection{Uniform well-posedeness with Kuznetsov--Blackstock and Westervelt--Blackstock nonlinearities in Case I}

To relate the previous analysis to the nonlinear equation, we again consider the fixed-point mapping $\mathcal{T}: \calB \ni \ppsi^* \mapsto \ppsi$, 
on the ball
\begin{equation}
\begin{aligned}
\calB =\left \{ \ppsi^* \in \calX:\right.&\, \, \|\ppsi^*\|_{\calX} \leq R,\ 
\left	(\ppsi^*, \ppsi^*_{t}, \ppsi^*_{tt})\vert_{t=0} =(\ppsi_0, \ppsi_1, 0) \}. \right.
\end{aligned}
\end{equation}
Here $\ppsi=\mathcal{T}\ppsi^*$ solves \eqref{pde_testingK1_lin} with
\begin{equation}\label{ab_testingK1}
\begin{aligned}
\aaa(\ppsi^*,\ppsi^*_t) &=\begin{cases} \,1+2k_1 \ppsi^*& (\textup{WB}), \\ \,1+2k_1 \ppsi^*_t& (\textup{KB}),\end{cases}
\qquad \bbb(\ppsi^*,\ppsi^*_t) = \begin{cases} \,1-2k_2 \ppsi^*& (\textup{WB}),\\ \,1-2k_2 \ppsi^*_t& (\textup{KB}),\end{cases}
\end{aligned}
\end{equation}
and 
\begin{equation}\label{f_testingK1}
	\begin{aligned}
f(x,t)&=\, -\calN(\ppsi^*_t, \nabla\ppsi^*, \nabla\ppsi^*_t)= 
\begin{cases}
-2 {\kthree} (\ppsi^*_t)^2& (\textup{WB}),\\
-2 \kthree \nabla \ppsi^*\cdot \nabla \ppsi_t^*& (\textup{KB} ),\end{cases}
\end{aligned}
\end{equation}
where (WB) stands for the Westervelt--Blackstock and (KB) for Kuznetsov--Blackstock nonlinearities.
\begin{theorem} \label{Thm:Wellp_testingK1_caseI}
	Let $T>0$ and $\tau \in (0, \bar{\tau}]$. 
Let assumptions 
\eqref{reg_kernels}  and
\eqref{Athree_general}--
\eqref{eqn:PhiPsi_Sobolevnorms},
\eqref{eqn:coercivity_2_CaseI}
on the kernels $\frakKone$ and $\frakKtwo$ hold with $s$, $\mathtt{r}$, $\sigma$, and $\rho$ so that the following continuous embedding holds:
\begin{equation}\label{embedding_testingK1_caseI}
\begin{aligned}
&W^{s,\mathtt{r}}(0,T; \Honetwo)\cap W^{\sigma,\rho}(0,T; \Hone)\\
\hookrightarrow&\,
\begin{cases}
L^\infty(0,T;L^\infty(\Omega))\cap W^{1,4}(0,T;L^4(\Omega))\cap L^2(0,T;\Honetwo)& (\textup{WB}),\\
W^{1,\infty}(0,T;L^\infty(\Omega))\cap H^1(0,T;W^{1,4}(\Omega))\cap L^2(0,T;\Honetwo)& (\textup{KB}).
\end{cases}
\end{aligned}
\end{equation} 
There exists a data size $r={r(T)}>0$, independent of $\tau$, such that if
	\begin{equation}\label{bound_init}
\|\Delta\ppsi(0)\|_{L^2}^2
+\|\Delta\ppsi_t(0)\|_{L^2}^2
+\tau^{2a}\|\frakKone\|_{{\mathcal{M}}(0,T)}^2 \|\Delta\ppsi_t(0)\|_{L^2}^2
\leq r^2,
	\end{equation}
	then there is a unique solution $\ppsi \in \calB$ of the nonlinear initial boundary-value problem 
	\begin{equation} \label{IBVP_testingK1}
	\left \{	\begin{aligned}
	&\begin{multlined}[t]	\taua  \frakKone * \ppsi_{ttt}+\aaa(\ppsi,\ppsi_t) \ppsi_{tt}
	-c^2 \bbb(\ppsi,\ppsi_t) \Delta \ppsi
	-   \taua c^2 \frakKone* \Delta \ppsi_t -\ttbdelta  \frakKtwo* \Delta \ppsi_{tt}\\ 
=-\calN(\ppsi_t, \nabla\ppsi, \nabla\ppsi_t),
\end{multlined}\\
	&\ppsi\vert_{\partial \Omega}=0, \\
	&	(\ppsi, \ppsi_t, \ppsi_{tt})\vert_{t=0} =(\ppsi_0, \ppsi_1, 0) .
	\end{aligned} \right. 
	\end{equation}
\end{theorem}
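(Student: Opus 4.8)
The plan is to prove Theorem~\ref{Thm:Wellp_testingK1_caseI} by a Banach fixed-point argument for the mapping $\mathcal{T}\colon\calB\ni\ppsi^*\mapsto\ppsi$ introduced above, with Proposition~\ref{Prop:Wellp_testingK1_lin}(I) supplying the $\tau$-uniform linear building block, very much in the spirit of the proof of Theorem~\ref{Thm:WellP_GFEIII_BWest}. First I would record that, since $\ppsi_2=0$, the expressions $\taua\frakKone\Lconv\ppsittt$ and $\taua(\frakKone\Lconv\ppsitt)_t$ coincide (cf.\ \eqref{general_wave_equation_rewritten}), so a fixed point $\ppsi^*=\ppsi$ of $\mathcal{T}$ is exactly a solution of \eqref{IBVP_testingK1}. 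Non-emptiness of $\calB$ follows by taking the solution of \eqref{pde_testingK1_lin} with $k_1=k_2=k_3=0$ (so $\aaa=\bbb=1$, $f=0$) provided by Proposition~\ref{Prop:Wellp_testingK1_lin}(I): by the linear estimate and \eqref{bound_init} it satisfies $\|\ppsi\|_{\calX}^2\lesssim_T r^2$, hence lies in $\calB$ once $r$ is small relative to $R$.

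To show $\mathcal{T}(\calB)\subseteq\calB$, I would fix $\ppsi^*\in\calB$ and verify the hypotheses of Proposition~\ref{Prop:Wellp_testingK1_lin}(I) for the coefficients \eqref{ab_testingK1} and source \eqref{f_testingK1}. The embedding \eqref{embedding_testingK1_caseI} gives $\ppsi^*\in L^\infty(L^\infty)$ in case (WB) and $\ppsit^*\in L^\infty(L^\infty)$ in case (KB), so $\aaa,\bbb\in L^\infty(0,T;L^\infty(\Omega))$; since $\|\aaa-1\|_{L^\infty(L^\infty)}\lesssim|k_1|R$, reducing $R$ secures non-degeneracy of $\aaa$, the membership $1/\aaa\in L^\infty(L^\infty)$, and the smallness requirement \eqref{eqn:cond_a-a0_CaseI}, so that \eqref{eqn:cond_ab} holds (note that in Case~I, unlike in Section~\ref{Sec:GFEIII} or in Case~II, no gradient or time-derivative bounds on $\aaa,\bbb$ are needed). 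Again by \eqref{embedding_testingK1_caseI}, $f\in L^2(0,T;L^2(\Omega))$ with
\[
\|f\|_{L^2(L^2)}\lesssim
\begin{cases}
\|\ppsit^*\|_{L^4(L^4)}^2 & (\textup{WB}),\\
\|\nabla\ppsi^*\|_{L^\infty(L^4)}\|\nabla\ppsit^*\|_{L^2(L^4)} & (\textup{KB}),
\end{cases}
\lesssim R^2 .
\]
Proposition~\ref{Prop:Wellp_testingK1_lin}(I) then yields $\ppsi\in\calX^\tau\subseteq\calX$ with $\|\ppsi\|_{\calX}^2\lesssim_T r^2+R^4$, the hidden constant being independent of $\tau$; choosing first $R$ and then $r$ small enough gives $\|\ppsi\|_{\calX}\leq R$, while the initial conditions $(\ppsi,\ppsit,\ppsitt)\vert_{t=0}=(\ppsi_0,\ppsi_1,0)$ are attained by the linear theory, so $\ppsi\in\calB$.

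For strict contractivity I would take $\ppsi=\mathcal{T}\ppsi^*$, $v=\mathcal{T}v^*$ and set $\phi=\ppsi-v$, $\phi^*=\ppsi^*-v^*$; then $\phi$ solves \eqref{pde_testingK1_lin} with the \emph{same} coefficients $\aaa(\ppsi^*,\ppsit^*)$, $\bbb(\ppsi^*,\ppsit^*)$ as for $\ppsi$, zero initial data, and a right-hand side $f_\phi$ that is linear in $(\phi^*,\phi_t^*,\nabla\phi^*,\nabla\phi_t^*)$; e.g.\ in case (WB) $f_\phi=-2k_1\phi^* v_{tt}+2k_2c^2\phi^*\Delta v-2k_3\phi_t^*(\ppsit^*+v_t^*)$, with the analogous expression involving $\phi_t^*$ and gradient products in case (KB). The point that makes this simpler than the argument in Section~\ref{Sec:GFEIII} is that the Case~I linear estimate requires only $f_\phi\in L^2(0,T;L^2(\Omega))$ — no time derivative of the source — and the embedding \eqref{embedding_testingK1_caseI} gives $\|f_\phi\|_{L^2(L^2)}\lesssim R\,\|\phi^*\|_{\calX}$ (using, for instance, $\|\phi^* v_{tt}\|_{L^2(L^2)}\leq\|\phi^*\|_{L^\infty(L^\infty)}\|v_{tt}\|_{L^2(L^2)}$ and, in case (KB), the $W^{1,\infty}(L^\infty)\cap H^1(W^{1,4})$ bounds on $\phi^*$). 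Applying Proposition~\ref{Prop:Wellp_testingK1_lin}(I) to $\phi$ then gives $\|\phi\|_{\calX}\lesssim_T R\,\|\phi^*\|_{\calX}$, so $\mathcal{T}$ is a strict contraction on $\calB$ in the full $\calX$-norm after a further reduction of $R$; closedness of $\calB$ in $\calX$ is argued as in \cite[Theorem 4.1]{kaltenbacher2022parabolic}. Banach's fixed-point theorem then produces a unique $\ppsi\in\calB$ with $\mathcal{T}\ppsi=\ppsi$, and the asserted $\calX$-bound is the self-mapping estimate with $f=-\calN(\ppsit,\nabla\ppsi,\nabla\ppsit)$.

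The genuinely hard part — the $\tau$-uniform linear estimate \eqref{eqn:enest_caseI}, which hinges on the interplay of the coercivity hypotheses \eqref{eqn:Alikhanov_1_testK1}, \eqref{eqn:coercivity_K1}, \eqref{eqn:coercivity_K2}, \eqref{eqn:coercivity_2_testK1}, \eqref{eqn:coercivity_2_CaseI} — has already been discharged in Proposition~\ref{Prop:Wellp_testingK1_lin}, so I expect the only real obstacle at this stage to be the coherent bookkeeping of the smallness thresholds ($R$ small for non-degeneracy and \eqref{eqn:cond_a-a0_CaseI}, $R$ small for the self-map inequality $C_T(r^2+R^4)\leq R^2$ and for the contraction constant $C_T R<1$, and finally $r$ small given $R$), together with checking that in \emph{both} the (WB) and (KB) cases the nonlinear terms land in $L^2(0,T;L^2(\Omega))$ via \eqref{embedding_testingK1_caseI}.
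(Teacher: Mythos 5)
Your proposal is correct and follows essentially the same route as the paper: a Banach fixed-point argument in which both the self-mapping property and the contraction are obtained by applying the Case I linear result (Proposition~\ref{Prop:Wellp_testingK1_lin}) to $\ppsi=\mathcal{T}\ppsi^*$ and to the difference $\phi=\mathcal{T}\ppsi^*-\mathcal{T}v^*$, with the embedding \eqref{embedding_testingK1_caseI} supplying the $L^2(L^2)$ bounds on the nonlinear sources and the smallness of $R$ securing non-degeneracy and \eqref{eqn:cond_a-a0_CaseI}. Your observation that the Case I linear estimate needs only $f\in L^2(0,T;L^2(\Omega))$, so that contraction holds in the full $\calX$-norm (unlike in Section~\ref{Sec:GFEIII}), is exactly the point the paper relies on.
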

\begin{proof}
The proof goes analogously to the one of Theorem~\ref{Thm:WellP_GFEIII_BKuznetsov}, using the fact that $\ppsi:=\mathcal{T}\ppsi^*$ solves \eqref{IBVP_TestingK1_lin} with $\aaa$, $\bbb$, and $f$ as in \eqref{ab_testingK1} and \eqref{f_testingK1} to establish self-mapping and the fact that 
$\phi=\ppsi-v:=\mathcal{T}(\ppsi^*)-\mathcal{T}(v^*)$ solves \eqref{IBVP_TestingK1_lin} with 
\begin{equation}\label{ab_testingK1_diff}
\begin{aligned}
\aaa(\ppsi^*,\ppsi^*_t) &=\begin{cases} \,1+2k_1 \ppsi^*& (\textup{WB}),\\ \,1+2k_1 \ppsi^*_{t}& (\textup{KB}),\end{cases}
\qquad \bbb(\ppsi^*,\ppsi^*_t)= \begin{cases} \,1-2k_2 \ppsi^*& (\textup{WB}),\\ \,1-2k_2 \ppsi^*_{t}& (\textup{KB}),\end{cases}
\end{aligned}
\end{equation}
and, with $\phi^*=u^*-v^*$,
\begin{equation}\label{f_testingK1_diff}
	\begin{aligned}
=&\,\begin{cases}
-2\phi^*(k_1v_{tt}+k_2\Delta v)
-2\kthree \phi^*_t(\ppsi^*_{t}+v^*_{t})& (\textup{WB}),\\[2mm]
-2\phi^*_{t}(k_1v_{tt}+k_2\Delta v)
-2 \kthree \nabla \phi^*\cdot \nabla u^*_{t}
-2 \kthree \nabla v^*\cdot \nabla \phi^*_{t}\ & (\textup{KB}),\end{cases}
\end{aligned}
\end{equation}
for establishing contractivity. Indeed, the continuous embedding assumption \eqref{embedding_testingK1_caseI} together with smallness of data size $r$ as well as $R$ ensures the required bounds. 
\end{proof}
The fact that a Kuznetsov-type nonlinearity is enabled here under only $H^2$ regularity of the initial data is due to the relative strength of the damping term with factor $\delta$ in Case I, where kernel $\frakKtwo$ is more singular than $\frakKone$.

\subsection{Uniform existence with Westervelt--Blackstock nonlinearities in Case II}
{To treat Case II, we restrict our considerations to the Westervelt--Blackstock nonlinearities with $k_1=0$; that is, we analyze the following equation: 
\begin{equation}
	\taua  \frakKone * \ppsi_{ttt}+\ \ppsi_{tt}
	-c^2 (1-2k_2 \ppsi) \Delta \ppsi
	-   \taua c^2 \frakKone* \Delta \ppsi_t -\ttbdelta  \frakKtwo* \Delta \ppsi_{tt}+ 2k_3 \ppsit^2=0.
\end{equation}}
\begin{theorem} \label{Thm:Wellp_testingK1_caseII}
	Let $T>0$ and $\tau \in (0, \bar{\tau}]$. 
Let assumptions 
\eqref{reg_kernels}  and
\eqref{Athree_general}--
\eqref{eqn:PhiPsi_Sobolevnorms},
\eqref{eqn:coercivity_2_CaseII}, \eqref{eqn:cond_pq_CaseII}, 
on the kernels $\frakKone$ and $\frakKtwo$ and $\ppsi_0,\, \ppsi_1\, \in \Honetwo$, $\ppsi_2=0$ on 
the initial data  hold and 
\begin{equation}\label{embedding_testingK1_caseII}
W^{s,\mathtt{r}}(0,T; \Honetwo)\cap W^{\sigma,\rho}(0,T; \Hone)\hookrightarrow L^\infty(0,T;L^\infty(\Omega)).
\end{equation}
There exists a data size $r=r(T)>0$, independent of $\tau$, such that if \eqref{bound_init} holds,
then there is a unique solution $\ppsi \in \calB$ of the nonlinear initial boundary-value problem \eqref{IBVP_testingK1} in case \textup{(WB)} with $k_1=0$ in \eqref{ab_testingK1}.
\end{theorem}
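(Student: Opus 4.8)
The plan is to repeat, \emph{mutatis mutandis}, the Banach fixed-point scheme from the proof of Theorem~\ref{Thm:Wellp_testingK1_caseI}, but invoking the Case~(II) branch of Proposition~\ref{Prop:Wellp_testingK1_lin} for the linearization. I set up the mapping $\calT: \calB \ni \ppsi^* \mapsto \ppsi$, where $\ppsi$ solves \eqref{pde_testingK1_lin} with the (WB), $k_1=0$ data
\[
\aaa \equiv 1, \qquad \bbb = \bbb(\ppsi^*) = 1 - 2k_2\ppsi^*, \qquad f = -2\kthree(\ppsit^*)^2,
\]
and $\calB = \{\ppsi^* \in \calX : \|\ppsi^*\|_{\calX} \le R, \ (\ppsi^*, \ppsit^*, \ppsitt^*)|_{t=0} = (\ppsi_0, \ppsi_1, 0)\}$, with $R>0$ ($\tau$-independent) to be fixed along the way. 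Note that freezing $\aaa$ to the constant $\aaa_0=1$, which is forced here by $k_1=0$, is precisely what makes Case~(II) of Proposition~\ref{Prop:Wellp_testingK1_lin} applicable (there $\aaa-\aaa_0$ must vanish), and that the hypothesis $\ppsi_2=0$ removes the spurious forcing $\taua\frakKone(t)\ppsitt(0)$ stemming from the rewriting \eqref{general_wave_equation_rewritten}.

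First I would check that, for $\ppsi^* \in \calB$, the coefficients and source satisfy the hypotheses of Proposition~\ref{Prop:Wellp_testingK1_lin}(II). The conditions \eqref{eqn:cond_ab} and \eqref{eqn:cond_ab_CaseII} on $\aaa$ are immediate since $\aaa \equiv 1$ (so $\nabla\aaa=0$, $\aaa_t=0$, $1/\aaa\in L^\infty(L^\infty)$); the membership $\bbb = 1-2k_2\ppsi^* \in L^\infty(0,T;L^\infty(\Omega))$ follows from embedding \eqref{embedding_testingK1_caseII} and $\|\ppsi^*\|_{\calX}\le R$; and $\bbb_t = -2k_2\ppsit^* \in L^{\tilde{q}}(0,T;L^\infty(\Omega))$ because $\calX \hookrightarrow W^{s-1,\mathtt{r}}(0,T;\Honetwo) \hookrightarrow L^{\mathtt{r}}(0,T;L^\infty(\Omega))$ with $\mathtt{r}\ge \tilde{q}$ in all cases of interest (Table~\ref{table:GFElaws_pq_etc}, where $\tilde{q}=1$, $\mathtt{r}=2$). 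For the source, $\|f\|_{L^\infty(L^2)} \lesssim \|\ppsit^*\|_{L^\infty(L^4)}^2$ and $\|f_t\|_{L^{\tilde{q}}(L^2)} = 4|\kthree|\,\|\ppsit^*\ppsitt^*\|_{L^{\tilde{q}}(L^2)} \lesssim \|\ppsit^*\|_{L^{\mathtt{r}}(L^\infty)}\|\ppsitt^*\|_{L^2(L^2)}$, using $\tfrac1{\tilde{q}} = \tfrac1{\mathtt{r}}+\tfrac12$ (which holds for the tabulated values), so $f \in L^\infty(0,T;\Ltwo)\cap W^{1,\tilde{q}}(0,T;\Ltwo)$ with norm $\lesssim_T R^2$.

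With this in place, the \textbf{self-mapping} property follows from the Case~(II) estimate of Proposition~\ref{Prop:Wellp_testingK1_lin}: since $\Delta\ppsi(0)=\Delta\ppsi_0$, $\Delta\ppsi_t(0)=\Delta\ppsi_1$, one gets $\|\ppsi\|_{\calX}^2 \lesssim_T \|f\|_{L^\infty(L^2)}^2 + \|f_t\|_{L^{\tilde{q}}(L^2)}^2 + \|\Delta\ppsi_0\|_{L^2}^2+\|\Delta\ppsi_1\|_{L^2}^2 \lesssim_T C R^4 + r^2$, hence $\|\ppsi\|_{\calX}\le R$ once $R$ and then $r$ are chosen small enough ($\tau$-independently). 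For \textbf{contractivity}, with $\ppsi=\calT\ppsi^*$, $v=\calT v^*$, the difference $\phi=\ppsi-v$ solves \eqref{pde_testingK1_lin} with $\aaa\equiv1$, $\bbb=1-2k_2\ppsi^*$, zero initial data, and right-hand side $f_\phi = -2k_2\phi^*\Delta v - 2\kthree\phi_t^*(\ppsit^*+v_t^*)$, $\phi^*=\ppsi^*-v^*$; these again satisfy the hypotheses of Proposition~\ref{Prop:Wellp_testingK1_lin}(II) (in particular $\Delta v\in W^{s,\mathtt{r}}(0,T;\Ltwo)\hookrightarrow L^\infty(0,T;\Ltwo)$), so the linear estimate with vanishing initial data yields $\|\phi\|_{\calX}^2 \lesssim_T \|f_\phi\|_{L^\infty(L^2)}^2 + \|(f_\phi)_t\|_{L^{\tilde{q}}(L^2)}^2 \lesssim_T R^2\|\phi^*\|_{\calX}^2$, a strict contraction after a further reduction of $R$; closedness of $\calB$ with respect to the relevant norm can be argued as in \cite[Theorem 4.1]{kaltenbacher2022parabolic}. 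The Banach fixed-point theorem then gives the unique $\ppsi\in\calB$ solving \eqref{IBVP_testingK1} in case (WB) with $k_1=0$.

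The hard part is the Case~(II) regularity bookkeeping: because $\Phi[\Delta\ppsi_{tt}]$ no longer dominates $\|\frakKone\Lconv\Delta\ppsi_{tt}\|_{L^2(L^2)}$, the coefficient $\aaa$ must be frozen (hence the restriction $k_1=0$) and the source $f$ and the coefficient derivative $\bbb_t$ must be placed \emph{precisely} in $L^\infty(0,T;\Ltwo)\cap W^{1,\tilde{q}}(0,T;\Ltwo)$ and $L^{\tilde{q}}(0,T;L^\infty(\Omega))$, respectively, with $\tilde{q}$ dictated by \eqref{eqn:cond_pq_CaseII}. Extracting these memberships — and, crucially, bounds that are quadratic in $R$ so that the smallness closes the fixed-point argument — from embedding \eqref{embedding_testingK1_caseII} and the structure of $\calX$, while also accommodating the fact that the largeness of $\overline{C}_3$ in \eqref{eqn:coercivity_2_CaseII} may require (for fixed $T$) a smallness input on $\|\bbb\|_{L^\infty(L^\infty)}$, i.e.\ on $R$, is the delicate point; everything else mirrors the Case~(I) argument of Theorem~\ref{Thm:Wellp_testingK1_caseI}.
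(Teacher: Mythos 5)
Your setup of $\calT$ on $\calB$ and the self-mapping step (checking that $\aaa\equiv 1$, $\bbb=1-2k_2\ppsi^*$, $f=-2k_3(\ppsit^*)^2$ satisfy the Case~(II) hypotheses of Proposition~\ref{Prop:Wellp_testingK1_lin}, with $\|f\|_{L^\infty(L^2)}+\|f_t\|_{L^{\tilde{q}}(L^2)}\lesssim R^2$) match the paper. The divergence, and the gap, is the contraction step. The paper deliberately does \emph{not} run a Banach contraction in Case~II: it states that the assumptions needed to make $\calT$ contractive on the basis of \eqref{ab_testingK1_diff}--\eqref{f_testingK1_diff} would be unrealistically strong (cf.\ Remark~\ref{rem:nonlinGFE}) and instead proves \emph{existence only}, via Ky Fan's generalization of Schauder's theorem on the convex, weak-$*$ compact set $\calB$, establishing weak-$*$ continuity of $\calT$ by compactness of $\calX\hookrightarrow W^{1,\infty}(0,T;L^\infty(\Omega))$, passage to the limit in the linear PDE defining $\calT(\ppsi^*_{n_\ell})$, and uniqueness for the \emph{linearized} problem. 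Your one-line claim $\|f_\phi\|_{L^\infty(L^2)}^2+\|(f_\phi)_t\|_{L^{\tilde{q}}(L^2)}^2\lesssim_T R^2\|\phi^*\|_{\calX}^2$ is precisely the step the paper identifies as the obstruction. Unlike Section~\ref{Sec:GFEIII} and Case~I, in Case~II there is no self-contained lower-order estimate in which to contract in a weaker norm (the $\ppsitt$-estimate \eqref{eqn:est-step2} still carries $\Phi[\Delta\ppsi_{tt}]$ on its right-hand side), so contraction must be measured in the full $\calX$-norm using the Case~(II) linear bound; the term $\partial_t\bigl[\phi_t^*(\ppsit^*+v_t^*)\bigr]=\phi_{tt}^*(\ppsit^*+v_t^*)+\phi_t^*(\ppsitt^*+v_{tt}^*)$ must then be placed in $L^{\tilde{q}}(0,T;L^2(\Omega))$ with a bound $R\,\|\phi^*\|_{\calX}$, which requires embeddings such as $\calX\hookrightarrow W^{1,2\tilde{q}/(2-\tilde{q})}(0,T;L^\infty(\Omega))$ that go strictly beyond the single hypothesis \eqref{embedding_testingK1_caseII} the theorem assumes, and which you neither state nor verify.

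Concretely, you must either (i) supply and verify those additional product and embedding estimates for the actual exponents $(s,\mathtt{r},\sigma,\rho,\tilde{q})$ of Table~\ref{table:GFElaws_pq_etc} — in which case you would be proving more than the paper does, since Table~\ref{tab:recap}, the section heading, and the introduction to Section~\ref{Sec:GFEI} all announce only \emph{existence} (not uniqueness) in Case~II — or (ii) replace the contraction by the paper's weak-$*$ continuity plus Schauder/Ky Fan argument. As written, the contractivity assertion is unsubstantiated and sits exactly where the paper's own analysis says the Banach route breaks down.
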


\begin{proof}
Establishing a self-mapping property of $\mathcal{T}$ on a sufficiently small ball works as in the proof of Theorems~\ref{Thm:WellP_GFEIII_BKuznetsov}, \ref{Thm:Wellp_testingK1_caseI}, based on assumption of having the continuous embedding in \eqref{embedding_testingK1_caseII}, which implies continuity of the embedding
\[
\begin{aligned}
&W^{s,\mathtt{r}}(0,T; \Honetwo)\cap W^{\sigma,\rho}(0,T; \Hone)\\
\hookrightarrow&\,
L^\infty(0,T;L^\infty(\Omega))\cap W^{1,\tilde{q}}(0,T;L^\infty(\Omega))\cap  W^{1,\infty}(0,T;L^4(\Omega))\cap W^{1,\frac{2\tilde{q}}{2-\tilde{q}}}(0,T;L^\infty(\Omega)).
\end{aligned}
\]
However, a corresponding assumption leading to contractivity based on \eqref{ab_testingK1_diff}, \eqref{f_testingK1_diff}  would be unrealistically strong; see Remark~\ref{rem:nonlinGFE}. We therefore (similarly to \cite{splittingModelsNonlinearAcoustics, kaltenbacher2019jordan}) only prove existence of solutions based on a general version of Schauder's fixed-point theorem in locally convex topological spaces (see~\cite{Fan1952}) and for this purpose establish weak$-*$ continuity of $\mathcal{T}$ as follows. \\
\indent For any sequence $(\ppsi^*_n)_{n\in\mathbb{N}}\subseteq \calB$ that weakly$-*$ converges to $\ppsi^*\in \calB$ in $\calX$, we also have \[(\mathcal{T}(\ppsi^*_n))_{n\in\mathbb{N}}\subseteq \calB.\] Thus, by compactness of the embedding $\calX\to W^{1,\infty}(0,T;L^\infty(\Omega))$, there exists a subsequence $(\ppsi^*_{n_\ell})_{\ell\in\mathbb{N}}$ such that $1\pm k_{1/2}\ppsi^*_{n_\ell\, t}$ converges to $1\pm k_{1/2}\ppsi^*_t$ strongly in $L^\infty(0,T;L^\infty(\Omega))$ and $\mathcal{T}(\ppsi^*_{n_\ell})$ converges weakly* in $\calX$ to some $\ppsi\in \calB$, which by definition of $\calB$ satisfies the initial and homogeneous Dirichlet boundary conditions. It is readily checked that $\ppsi$ also solves the PDE defining $\mathcal{T}(\ppsi^*)$, which, by uniqueness in Proposition~\ref{Prop:Wellp_testingK1_lin}, implies $\ppsi=\mathcal{T}(\ppsi^*)$. A subsequence-subsequence argument yields weak$-*$ convergence in $\calX$ of the whole sequence $(\mathcal{T}(\ppsi^*_n))_{n\in\mathbb{N}}$ to $\mathcal{T}(\ppsi^*)$.
\end{proof}


As can be read off from Table~\ref{table:GFElaws_pq_etc}, embedding \eqref{embedding_testingK1_caseII} is satisfied for all Compte--Metzler laws. To establish embedding \eqref{embedding_testingK1_caseI} for GFE I, we use interpolation 
\[
\|w\|_{H^{\frac32+\frac{\alpha(1-\epsilon)}{4}}(0,T;H^{\frac{3+\epsilon}{2}}(\Omega))}\lesssim
\|w\|_{H^{\frac32}(0,T;\Honetwo)}^{\frac{1+\epsilon}{2}}
\|w\|_{H^{\frac{3+\alpha}{2}}(0,T;\Hone)}^{\frac{1-\epsilon}{2}}
\]
with $\alpha>0$ and $\epsilon\in(0,1)$. 
\begin{remark}[Uniqueness of solutions of \eqref{IBVP_testingK1} in Case II]\label{rem:nonlinGFE}
To prove contractivity of $\mathcal{T}$ in Case II with Westervelt--Blackstock nonlinearity when $k_1 \neq 0$ (that is, with $\aaa=1+2k_1 u$), we would need  
\begin{equation} 
\begin{aligned}
	2\phi^*(k_1v_{tt}+k_2\Delta v)-2(k_1+k_2)(\ppsi_{t}^*+v_{t}^*)\phi_{t}^*
\in \, L^\infty(0,T;L^2(\Omega))\cap W^{1,\tilde{q}}(0,T;L^2(\Omega)),
\end{aligned}	
\end{equation}
thus requiring an estimate on $v_{ttt}$ (and thus $\psittt$ in the existence proof). 
This is clearly beyond the scope of the available energy estimates. Time differentiation of the PDE and further testing might enable this at the cost of stronger conditions on the initial data.

Likewise, Kuznetsov-type nonlinearities ($\aaa=1+2k_1 \ut$) in Case II would require $\ppsi_{tt}\in L^{\tilde{q}}(0,T;L^\infty(\Omega))$ (as needed for $\bbb_t\in L^{\tilde{q}}(0,T;L^\infty(\Omega))$ in estimate \eqref{eqn_K1Lq}), which seems to be out of reach for most Compte--Metzler laws.  {The \ref{fJMGT} equation based on the GFE law, where $\frakKtwo=1$, allows for an alternative testing strategy that makes it possible to incorporate also Kuznetsov-type nonlinearities under stronger assumptions on the regularity of data; we refer to~\cite{nikolic2023} for details and the corresponding analysis. }\\
\indent Note that the results obtained for the \ref{fJMGTIII} equation here are weaker than in the previous Section~\ref{Sec:GFEIII}.
\end{remark}

\subsection{Weak singular limits} We next discuss the weak limiting behavior of these nonlocal equations in both cases. Let $\tau \in (0, \bar{\tau}]$. Under the assumptions of Theorems~\ref{Thm:Wellp_testingK1_caseI} and \ref{Thm:Wellp_testingK1_caseII} with uniformly bounded data
\begin{equation}
\|\Delta\ppsi^\tau(0)\|_{L^2}^2
+\|\Delta\ppsi^\tau_t(0)\|_{L^2}^2
+\tau^{2a}\|\frakKone\|_{{\mathcal{M}}(0,T)}^2 \|\Delta\ppsi^\tau_t(0)\|_{L^2}^2 \leq r^2
\end{equation}
{and  $r$ independent of $\tau$ as in Theorems~\ref{Thm:Wellp_testingK1_caseI} and \ref{Thm:Wellp_testingK1_caseII}},
consider
\begin{equation} 
	\left \{	\begin{aligned}
&\begin{multlined}[t]	\taua  \frakKone * \ppsi_{ttt}^\tau+\aaa(\ppsi^\tau, \ppsi^\tau_t) \ppsi_{tt}^\tau
-c^2 \bbb(\ppsi^\tau, \ppsi^\tau_t) \Delta \ppsi^\tau
-   \taua c^2 \frakKone* \Delta \ppsi_{t}^\tau \\ \hspace*{4cm}-\ttbdelta  \frakKtwo* \Delta \ppsi_{tt}^\tau + 
\calN(\ppsi_t, \nabla\ppsi, \nabla\ppsi_t)
=0 \quad \text{in } \Omega \times (0,T), \end{multlined}\\
&\ppsi^\tau\vert_{\partial \Omega}=0, \\
&	
{
(\ppsi^\tau, \ppsi_{t}^\tau, \ppsi_{tt}^\tau)\vert_{t=0} =(\ppsi^\tau_0, \ppsi^\tau_1, 
0
) .
}
\end{aligned} \right. 
\end{equation}
According to Theorems~\ref{Thm:Wellp_testingK1_caseI}, \ref{Thm:Wellp_testingK1_caseII} a solution of this problem exists  in $ \calB$ (although it might not be unique in Case II). Therefore, we have the following uniform bounds with respect to the relaxation time:
\begin{equation} \label{bounds_tau_testingK1}
\left \{	\begin{aligned}
&\ppsi^\tau \ \text{is bounded in } W^{s,\mathtt{r}}(0,T; \Honetwo)\cap W^{\sigma,\rho}(0,T; \Hone), \\
&\ppsi_{tt}^\tau  \ \text{is bounded in } L^2(0,T; \Ltwo). 
\end{aligned} \right.
\end{equation}
hence existence of a subsequence, not relabeled, such that 
\begin{equation} \label{weak_limits_1_tau_testingK1}
\begin{alignedat}{4} 
\ppsi^\tau&\relbar\joinrel\rightharpoonup \ppsi && \quad\text{ weakly-$\star$} &&\text{ in } &&W^{s,\mathtt{r}}(0,T; \Honetwo)\cap W^{\sigma,\rho}(0,T; \Hone),  \\
\ppsi_{tt}^\tau &\relbar\joinrel\rightharpoonup \ppsi_{tt} && \quad \text{ weakly}  &&\text{ in } &&L^2(0,T; \Ltwo).
\end{alignedat} 
\end{equation} 
Assuming compactness of the embedding 
\begin{equation}\label{compactness_tau_testingK1}
\calX\hookrightarrow  \hookrightarrow C([0,T]; H^{\beta_0}(\Omega))\cap C^1([0,T]; H^{\beta_1}(\Omega)), \quad
\beta_0,\, \beta_1\,\geq0 
\end{equation}
we have additionally
\begin{equation} \label{strong_Climits_tau_testingK1}
\begin{alignedat}{4} 
\ppsi^\tau&\longrightarrow \ppsi && \quad \text{ strongly}  &&\text{ in } &&
C([0,T]; H^{\beta_0}(\Omega)),\\
\ppsi_{t}^\tau &\longrightarrow \ppsi_{t} && \quad \text{ strongly}  &&\text{ in } &&
C([0,T]; H^{\beta_1}(\Omega)).
\end{alignedat} 
\end{equation} 
We should prove that $u$ solves the limiting problem. Let $v \in C_0^\infty([0,T]; C_0^\infty(\Omega))$. We have with $\bar{\ppsi}=\ppsi-\ppsi^\tau$:
\begin{equation}
\begin{aligned}
&\begin{multlined}[t] \intTO \aaa(\ppsi, \ppsi_t) \ppsi_{tt} v \dxs-c^2 \intTO \bbb(\ppsi, \ppsi_t) \Delta \ppsi v \dxs +\rt^b \delta \intTO  \frakKtwo* \nabla \ppsi_{tt} \cdot \nabla v \dxs\\
+\intTO \calN(\ppsi_t, \nabla \ppsi, \nabla \ppsi_{t}) v \dxs \end{multlined}\\
=&	\begin{multlined}[t]  \intTO  \aaa(\ppsi,\ppsi_{t})\bar{\ppsi}_{tt} v \dxs- c^2 \intTO \bbb(\ppsi,\ppsi_{t}) \Delta \bar{\ppsi} v \dxs+\rt^b \delta \intTO  \frakKtwo* \nabla \bar{\ppsi}_{tt} \cdot \nabla v \dxs \\
- \intTO  \taua \frakKone * (\ppsi_{ttt}^\tau - c^2 \Delta \ppsi_{t}^\tau) v \dxs \\
-\intTO (\aaa(\ppsi^\tau,\ppsi_{t}^\tau)-\aaa(\ppsi,\ppsi_{t})) \ppsi_{tt}^\tau v \dxs + c^2\intTO (\bbb(\ppsi^\tau,\ppsi_{t}^\tau)-\bbb(\ppsi, \ppsi_{t}))\Delta \ppsi^\tau v \dxs\\ -\intTO (\calN(\ppsi^\tau_t, \nabla \ppsi^\tau,  \nabla \ppsi_{t}^\tau) -\calN(\ppsi_t, \nabla \ppsi,  \nabla \ppsi_{t}))v \dxs.  \end{multlined}
\end{aligned}
\end{equation}
We wish to prove that the right-hand side tends to zero as $\tau \searrow 0$. To this end, we rely on the established weak and strong convergence in \eqref{weak_limits_1_tau_testingK1} and \eqref{strong_Climits_tau_testingK1}, respectively. We first discuss the terms involving the kernels and treat them by means of transposition and integration by parts (see \eqref{integration_by_parts_transposition_identity}), 
which as compared to the proof of Theorem~\ref{Thm:WeakLim_GFEIII_BWest} is required due to the limited regularity of $\bar{\ppsi}_{tt}$: 
\begin{equation}
\begin{aligned}
&\intTO  (\frakKtwo* \nabla \bar{\ppsi}_{tt})(s) \cdot \nabla v(s) \dxs
=-\intTO  (\frakKtwo* \bar{\ppsi}_{tt})(s) \cdot \overline{\Delta v}^T(T-s) \dxs\\
&=-\intTO  \bar{\ppsi}_{tt}(s) \cdot (\frakKtwo* \overline{\Delta v}^T)(T-s) \dxs
\ \rightarrow  0 \quad \text{as } \tau \searrow 0,
\end{aligned}
\end{equation}
and for $w^\tau:= \ppsi_{tt}^\tau - c^2 \Delta \ppsi^\tau$
\begin{equation}
\begin{aligned}
&\taua \intTO  (\frakKone * w^\tau_t)(s) v(s) \dxs
=\taua \intTO  (\frakKone * w^\tau_t)(s) \overline{v}^T (T-s)\dxs\\
&=\taua \intTO  w^\tau_t(s) (\frakKone * \overline{v}^T) (T-s)\dxs\\
&=\taua \Bigl(\intTO  w^\tau(s) (\frakKone * \overline{v}^T)_t (T-s)\dxs
-\int_\Omega w^\tau(0) (\frakKone * \overline{v}^T)_t (T)\dx\Bigr)
\ \rightarrow  0 \quad \text{as } \tau \searrow 0.
\end{aligned}
\end{equation}
Here we have used the fact that due to $v \in C_0^\infty([0,T]; C_0^\infty(\Omega))$ we have 
\[(\frakKone * \overline{v}^T)_t (t)= (\frakKone * \overline{v}^T_t) (t)+ \frakKone(t) \overline{v}^T(0)=-\int_0^t\frakKone(s)v_t(T-s)\ds\] which vanishes for $t=0$, even in case $\frakKone=\delta_0$.

By relying on the equivalence of the norms $\|{\aaa(\ppsi,\ppsi_{t}) v}\|_{L^2}$, $\|v\|_{L^2}$, and $\|\bbb(\ppsi,\ppsi_t) v\|_{L^2}$, we can conclude that 
\begin{equation}
\begin{aligned}
\intTO  \aaa(\ppsi,\ppsi_{t})\bar{\ppsi}_{tt} v \dxs+ c^2 \intTO \bbb(\ppsi,\ppsi_{t}) \Delta \bar{\ppsi} v \dxs  \rightarrow  0 \quad \text{as } \tau \searrow 0.
\end{aligned}
\end{equation}
We have
\begin{equation}
\begin{aligned}
&-\intTO (\aaa(\ppsi^\tau,\ppsi_{t}^\tau)-\aaa(\ppsi,\ppsi_{t})) \ppsi^\tau_{tt} v \dxs + c^2\intTO (\bbb(\ppsi^\tau,\ppsi_{t}^\tau)-\bbb(\ppsi,\ppsi_{t}))\Delta \ppsi^\tau v \dxs \\
=& \, -2\intTO \bar{w}(k_1 \ppsi_{tt}^\tau + k_2 \Delta \ppsi^\tau) v \dxs 	
\end{aligned}
\end{equation}
with $\bar{w}=\bar{\ppsi}$ (WB) or $\bar{w}=\bar{\ppsi}_t$ (KB), 
which tends to zero as well thanks to \eqref{bounds_tau_testingK1} and \eqref{strong_Climits_tau_testingK1}. Finally, with the Westervelt--Blackstock type nonlinearity (cf.\ \eqref{ab_testingK1_diff},  \eqref{f_testingK1_diff}):
\begin{equation}
\begin{aligned}
\intTO (\calN(\ppsi^\tau_t) -\calN(\ppsi_t))v \dxs 
=&\, 2k_3 \intTO \bar{\ppsi}_t(\ppsi_t+\ppsi^\tau_t)  v \dxs \\
=&\,-2k_3\intTO \bar{\ppsi}((\ppsi_t+\ppsi^\tau_t)  v)_t \dxs
\end{aligned}
\end{equation}
and with the Kuznetsov--Blackstock nonlinearity:
\begin{equation}
\begin{aligned}
&\intTO (\calN(\nabla \ppsi^\tau, \ppsi_{t}^\tau) -\calN(\nabla \ppsi,  \nabla \ppsi_{t}))v \dxs
=&\, 2 \kthree \intTO\Bigl( \nabla \bar{\ppsi}\cdot \nabla \ppsi^\tau_t+\nabla \ppsi\cdot \nabla \bar{\ppsi}_t\Bigr)v \dxs.
\end{aligned}
\end{equation}
We have
\[
\begin{aligned}
&\nabla\bar{\ppsi}\to0\text{ in }L^2(0,T; \Ltwo), \qquad
\|v\nabla \ppsi^\tau_t\|_{L^2(L^2)}
\leq\|v\|_{L^\infty(L^\infty)} \|\nabla \ppsi^\tau_t\|_{L^2(;L^2)}
\end{aligned}
\]
and 
\[
\begin{aligned}
	&\bar{\ppsi}_t\rightharpoonup0\ \text{weakly in }L^2(0,T; \Ltwo), \quad
	\|\nabla\cdot(v\nabla \ppsi)\|_{L^2(L^2)}
	\leq \begin{multlined}[t]\|v\|_{L^\infty(L^\infty)} \|\Delta \ppsi\|_{L^2(L^2)}\\
	+\|\nabla v\|_{L^\infty(L^4)} \|\nabla \ppsi\|_{L^2(L^4)}. \end{multlined}
\end{aligned}
\]
In both cases these terms tend to zero on account of again \eqref{bounds_tau_testingK1}, \eqref{weak_limits_1_tau_testingK1}, \eqref{strong_Climits_tau_testingK1}. The attainment of initial conditions $(\ppsi_1, \ppsi_2)$ follows by \eqref{strong_Climits_tau_testingK1}, analogously to \eqref{limits_tau_initial_BW}. 
With a subsequence-subsequence argument and using uniqueness for the limiting equation,
similarly to Theorem~\ref{Thm:WeakLim_GFEIII_BWest}, this leads to the following result.
\begin{theorem} \label{Thm:WeakLim_testingK1}
	Let the assumptions of Theorem~\ref{Thm:Wellp_testingK1_caseI} or \ref{Thm:Wellp_testingK1_caseII} 
with 
	\begin{equation}
\|\Delta\ppsi^\tau(0)\|_{L^2}^2+\|\Delta\ppsi^\tau_t(0)\|_{L^2}^2
+\tau^{2a}\|\frakKone\|_{{\mathcal{M}}(0,T)}^2 \|\Delta\ppsi^\tau_t(0)\|_{L^2}^2 
\leq r^2,
	\end{equation}
for $\tau \in (0, \bar{\tau}]$,
as well as embedding \eqref{compactness_tau_testingK1} hold. Then any family $\{\ppsi^\tau\}_{\tau \in (0, \bar{\tau}]}$ of solutions to
	\eqref{IBVP_testingK1}
	converges weakly in the sense of \eqref{weak_limits_1_tau_testingK1} to the solution $\ppsi \in\calX$ of 
	\begin{equation} \label{IBVP_tau0_testingK1}
	\left \{	\begin{aligned}
	&\begin{multlined}[t]	\aaa(\ppsi,\ppsi_t) \ppsi_{tt}
	-c^2 \bbb(\ppsi,\ppsi_t) \Delta \ppsi
	 -\ttbdelta  \frakKtwo* \Delta \ppsi_{tt}
+\calN(\ppsi_t, \nabla\ppsi, \nabla\ppsi_t) =0
\end{multlined}\\
	&\ppsi\vert_{\partial \Omega}=0, \\
	&	(\ppsi, \ppsi_t)\vert_{t=0} =(\ppsi_0, \ppsi_1) 
	\end{aligned} \right. 
	\end{equation}
with $\aaa$ and $\bbb$ as in \eqref{ab_testingK1} and $\calN$ as in  \eqref{f_testingK1} (and restricted to the Westervelt--Blackstock case with {$k_1=0$} under the conditions of Theorem~\ref{Thm:Wellp_testingK1_caseII}).
\end{theorem}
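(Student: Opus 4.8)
The plan is to carry out the compactness-and-passage-to-the-limit scheme already outlined in this subsection, making the formal manipulations rigorous. First I would invoke Theorem~\ref{Thm:Wellp_testingK1_caseI} in Case~I, or Theorem~\ref{Thm:Wellp_testingK1_caseII} in Case~II: since the data bound is uniform in $\tau$, the solution family $\{\ppsi^\tau\}_{\tau\in(0,\bar\tau]}$ lies in a fixed ball $\calB\subset\calX$, which yields the $\tau$-uniform bounds \eqref{bounds_tau_testingK1}. By reflexivity and separability of the spaces defining $\calX$ together with the Banach--Alaoglu theorem, I extract a (non-relabeled) subsequence converging in the sense of \eqref{weak_limits_1_tau_testingK1}. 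The compactness assumption \eqref{compactness_tau_testingK1}, an Aubin--Lions--Simon-type embedding, then upgrades this to the strong convergences \eqref{strong_Climits_tau_testingK1} for $\ppsi^\tau$ and $\ppsi_t^\tau$; in particular the time-$0$ traces converge, so the limit $\ppsi$ attains the data $(\ppsi_0,\ppsi_1)$ obtained from $(\ppsi_0^\tau,\ppsi_1^\tau)$.

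Next I would test the equation for $\ppsi^\tau$ against an arbitrary $v\in C_0^\infty([0,T];C_0^\infty(\Omega))$ and show that every term on the ``defect'' side tends to zero. The terms carrying the explicit factor $\taua$, namely $\taua\frakKone\Lconv(\ppsi_{ttt}^\tau-c^2\Delta\ppsi_t^\tau)$, are handled by writing $\frakKone\Lconv\ppsi_{ttt}^\tau=(\frakKone\Lconv w^\tau)_t$ with $w^\tau=\ppsi_{tt}^\tau-c^2\Delta\ppsi^\tau$ and then transferring the time derivative and the convolution onto the time-compactly-supported test function via the transposition and integration-by-parts identities \eqref{integration_by_parts_transposition_identity}; the resulting integrals are bounded uniformly in $\tau$ by norms of $w^\tau$ times norms of $v$, so the prefactor $\taua\to0$ eliminates them. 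The key technical point here is that $(\frakKone\Lconv\overline{v}^T)_t(0)=0$ even when $\frakKone=\delta_0$, because $v$ vanishes at $t=T$, so no boundary contribution at $t=0$ survives. The damping term $\ttbdelta\frakKtwo\Lconv\nabla\bar{\ppsi}_{tt}$ with $\bar{\ppsi}=\ppsi-\ppsi^\tau$ is treated the same way: after transposing the convolution and the Laplacian onto $v$, only $\bar{\ppsi}_{tt}\rightharpoonup0$ weakly in $L^2(L^2)$ is needed, which sidesteps the limited regularity of $\bar{\ppsi}_{tt}$. The remaining terms use the strong convergence \eqref{strong_Climits_tau_testingK1}: the differences $\aaa(\ppsi^\tau,\ppsi_t^\tau)-\aaa(\ppsi,\ppsi_t)$ and $\bbb(\ppsi^\tau,\ppsi_t^\tau)-\bbb(\ppsi,\ppsi_t)$ equal $2k_i\bar w$ with $\bar w=\bar{\ppsi}$ (WB) or $\bar w=\bar{\ppsi}_t$ (KB) converging strongly, while $\ppsi_{tt}^\tau$ and $\Delta\ppsi^\tau$ stay uniformly bounded; the term $\calN(\ppsi^\tau)-\calN(\ppsi)$ factors through $\bar{\ppsi}_t$ (WB) or through $\nabla\bar{\ppsi}$ and $\nabla\bar{\ppsi}_t$ (KB) times uniformly bounded quantities, and vanishes by pairing strong $L^2$-convergence with the boundedness of $\nabla\ppsi_t^\tau$, respectively weak convergence of $\bar{\ppsi}_t$ with the boundedness of $\nabla\cdot(v\nabla\ppsi)$. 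Finally, using the non-degeneracy of $\aaa$ and the consequent equivalence of the weighted and unweighted $L^2$ norms, the terms $\aaa(\ppsi,\ppsi_t)\bar{\ppsi}_{tt}$ and $\bbb(\ppsi,\ppsi_t)\Delta\bar{\ppsi}$ tested against $v$ go to zero by the weak convergences in \eqref{weak_limits_1_tau_testingK1} (for $\Delta\bar{\ppsi}$, after noting $\Delta\ppsi^\tau$ is bounded in $W^{s,\mathtt{r}}(0,T;L^2)$ and hence $\Delta\bar{\ppsi}\rightharpoonup0$).

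Lastly I would establish uniqueness for the limiting problem \eqref{IBVP_tau0_testingK1}: for the difference $\bar{\ppsi}$ of two solutions with vanishing data, test the (time-integrated) equation with $\bar{\ppsi}_{tt}$, as in the contractivity step of Theorems~\ref{Thm:WellP_GFEIII_BKuznetsov} and \ref{Thm:Wellp_testingK1_caseI}, using the coercivity of $\frakKtwo$ from \eqref{eqn:coercivity_K2} to absorb the memory damping and the smallness of the solutions to close a Gronwall estimate; in Case~II this is carried out only for the Westervelt--Blackstock nonlinearity with $k_1=0$, which is exactly the setting in which the theorem is asserted there. Uniqueness of the limit then permits the standard subsequence-subsequence argument, promoting the convergence of the extracted subsequence to convergence of the whole family $\{\ppsi^\tau\}_{\tau\in(0,\bar\tau]}$, and the proof is complete. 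I expect the main obstacle to be the careful bookkeeping of the memory/kernel terms under the weak regularity of $\bar{\ppsi}_{tt}$---in particular justifying the transposition identities \eqref{integration_by_parts_transposition_identity} on the limited-regularity limit functions and verifying that no spurious $t=0$ boundary term persists in the $\frakKone=\delta_0$ case---rather than the nonlinear terms, which become routine once the strong convergences \eqref{strong_Climits_tau_testingK1} are in hand.
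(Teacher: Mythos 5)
Your proposal is correct and follows essentially the same route as the paper: uniform bounds from the well-posedness theorems, weak-$*$ extraction plus the compact embedding \eqref{compactness_tau_testingK1} for strong convergence, passage to the limit in the weak form with the kernel terms handled by the transposition/integration-by-parts identities \eqref{integration_by_parts_transposition_identity} (including the key observation that $(\frakKone\Lconv\overline{v}^T)_t(0)=0$ even for $\frakKone=\delta_0$), and finally uniqueness of the limit plus a subsequence--subsequence argument. No substantive differences from the paper's argument.
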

\begin{remark}[Weak limits for solutions of the fJMGT equations]\label{rem:taulimitGFE}
The condition on the compactness of the embedding \eqref{compactness_tau_testingK1} that is left to be verified,  for equations based on the Compte--Metzler fractional laws holds by interpolation
\[
\begin{aligned}
H^\sigma(0,T;H^1(\Omega))\cap H^s(0,T;H^2(\Omega))
&\subseteq H^{\theta_i\sigma+(1-\theta_i)s}(0,T;H^{2-\theta_i}(\Omega))\\
&\hookrightarrow C^i([0,T];H^{\beta_i}(\Omega)), \quad i\in\{0,1\}
\end{aligned}
\]
for $\theta_i > \frac{i+1/2-s}{\sigma-s}$.
This yields $\beta_0\in(0,2)$ and $\beta_1\in(0,2-\frac{3-2s}{2(\sigma-s)})$; more precisely,
$\beta_1\in(0,2)$ for the laws GFE I, GFE III and $\beta_1\in(0,1+\alpha)$ for GFE.
\end{remark}
\section*{Acknowledgments} 
The work of BK was supported by the Austrian Science Fund FWF under the grants P36318 and DOC 78.
\renewcommand\appendixname{Appendix}
\begin{appendices}
\section{Analysis of the semi-discrete problems} \label{Appendix:Galerkin}
We present in this appendix the proof of the unique solvability of the semi-discrete problem considered in {Propositions~\ref{Prop:Wellp_GFEIII_BWest_lin} and~\ref{Prop:Wellp_testingK1_lin}}. Let $\{\phi_i\}_{i \geq 1}$ be a basis of $V=\Honetwo$ consisting of the eigenfunctions of the Dirichlet--Laplace operator. Let $V_n=\text{span}\{\phi_1, \ldots, \phi_n\} \subset V$ and set
\begin{equation}
	\begin{aligned}
		\ppsi^{(n)}(t) = \sum_{i=1}^n \xin (t) \phi_i.
	\end{aligned}	
\end{equation}
We choose the approximate initial data as
\begin{equation}
	\ppsi^{(n)}_0=  \sum_{i=1}^n \xi_i^{(0, n)}(t) \phi_i,\quad \ppsi^{(n)}_1= \sum_{i=1}^n \xi_i^{(1, n)}(t) \phi_i, \quad  \ppsi^{(n)}_2 \sum_{i=1}^n \xi_i^{(2, n)}(t) \phi_i \in V_n,
\end{equation}
({with $\ppsi^{(n)}_2=0$ in Proposition~\ref{Prop:Wellp_testingK1_lin}})  such that
\begin{equation} \label{convergence_approx_initial_data}
	\begin{aligned}
		\ppsi^{(n)}_0\rightarrow \ppsi_0 \  \text{in} \ \Honetwo, \  	\ppsi^{(n)}_1 \rightarrow \ppsi_1 \ \text{in} \ \Honetwo, \ \text{and } \, 	\ppsi^{(n)}_2 \rightarrow \ppsi_2 \  \text{in} \ \Honezero, \ n \rightarrow \infty.
	\end{aligned}
\end{equation}
For each $n \in \N$, the system of Galerkin equations is given by
\begin{equation}
	\begin{aligned}
		\begin{multlined}[t]\taua	\sum_{i=1}^n (\frakKone * \xittt)(t) (\phi_i, \phi_j)_{L^2}+ \sum_{i=1}^n \xitt (\aaa(t)\phi_i, \phi_j)_{L^2}+c^2  \sum_{i=1}^n \xin (\bbb(t) \Delta \phi_i, \phi_j)_{L^2} \\
			+\tau^a c^2 \sum_{i=1}^n (\frakKone * \xit)(t) (\nabla \phi_i, \nabla \phi_j)_{L^2} +\ttbdelta \sum_{i=1}^n (\frakKtwo * \xitt)(t) (\nabla \phi_i, \nabla \phi_j)_{L^2} \\
			= (f(t), \phi_j)_{L^2}
		\end{multlined}	
	\end{aligned}		
\end{equation}
for a.e.\ $t \in (0,T)$ and all $j \in \{1, \ldots, n\}$. With $\bxi = [\xin_1 \ \ldots \  \xin_n]^T$, we can write this system in matrix form 
\begin{equation}
	\left \{	\begin{aligned}
		& \taua M	\frakKone * \bxittt + M_\aaa \bxitt+ K_\bbb\bxi+\tau^a c^2 K  \frakKone* \bxit + \ttbdelta K \frakKtwo *\bxitt = \boldsymbol{f}, \\[1mm]
		& (\bxin, \bxit, \bxitt)\vert_{t=0} = (\boldsymbol{\xi_0}, \boldsymbol{\xi_1}, \boldsymbol{\xi_2}),
	\end{aligned} \right.
\end{equation}
where $(\boldsymbol{\xi_0}, \boldsymbol{\xi_1}, \boldsymbol{\xi_2})=([\xi_1^{(0,n)} \, \ldots \, \xi_n^{(0, n)}]^T,\, [\xi_1^{(1,n)} \, \ldots \, \xi_n^{(1, n)}]^T,\, [\xi_1^{(2,n)} \, \ldots \, \xi_n^{(2, n)}]^T)$. Above, $M$ and $K$ are the standard mass and stiffness matrices, respectively. $M_\aaa$ and $K_\bbb$ are the corresponding matrices with wieghted entries:
\[
M_{\aaa, ij}=(\aaa \phi_i, \phi_j)_{L^2}, \quad K_{\bbb, ij}= (\bbb \nabla \phi_i,\nabla \phi_j)_{L^2}.
\] 
Let $\bmu= \frakKone*\bxittt$ be the new unknown. Then with $\tfrakKone$ defined by \eqref{Athree_GFEIII} we have
\begin{equation}
	\begin{aligned}
		&\bxitt=\, \tfrakKone* \bmu + \boldsymbol{\xi_2}, \\
		& \bxit= 1*\tfrakKone*\bmu+ \boldsymbol{\xi_2}t+\boldsymbol{\xi_1}, \\
		&\bxi=\,1*1*\tfrakKone*\bmu+\boldsymbol{\xi_2} \frac{t^2}{2}+\boldsymbol{\xi_1}t +\boldsymbol{\xi_0}.
	\end{aligned}
\end{equation}
The system can then be equivalently rewritten as a system of Volterra equations:
\begin{equation}
	\begin{aligned}
		\begin{multlined}[t] \taua	\bmu+ M^{-1} M_\aaa(\tfrakKone* \bmu + \boldsymbol{\xi_2})+ M^{-1}K_\bbb (1*1*\tilde{\frakKone}*\bmu+\boldsymbol{\xi_2} \frac{t^2}{2}+\boldsymbol{\xi_1}t +\boldsymbol{\xi_0})\\
			+\tau^a c^2 M^{-1}K  \frakKone* (1*\tilde{\frakKone}*\bmu+ \boldsymbol{\xi_2}t+\boldsymbol{\xi_1}) + \ttbdelta M^{-1}K \frakKtwo *(\tilde{\frakKone}* \bmu + \boldsymbol{\xi_2}) = M^{-1}\boldsymbol{f}
		\end{multlined}
	\end{aligned}
\end{equation}
or
\begin{equation}
	\begin{aligned}
		\begin{multlined}[t] \taua	\bmu+ M^{-1} M_\aaa \tilde{\frakKone}* \bmu + M^{-1}K_\bbb 1*1*\tilde{\frakKone}*\bmu
			+\tau^a c^2 M^{-1}K  \frakKone* 1*\tilde{\frakKone}*\bmu\\+ \ttbdelta M^{-1}K \frakKtwo *\tilde{\frakKone}* \bmu  =  \boldsymbol{\tilde{f}}
		\end{multlined}
	\end{aligned}
\end{equation}
with the right-hand side
\begin{equation}
	\begin{aligned}
		\boldsymbol{\tilde{f}}=&\, 	\begin{multlined}[t]  M^{-1}\boldsymbol{f}-M^{-1} M_\aaa \boldsymbol{\xi_2}-M^{-1}K_\bbb(\boldsymbol{\xi_2} \frac{t^2}{2}+\boldsymbol{\xi_1}t +\boldsymbol{\xi_0})-\tau^a c^2 M^{-1}K  \frakKone*(\boldsymbol{\xi_2}t+\boldsymbol{\xi_1}) \\-\ttbdelta M^{-1}K \frakKtwo *\boldsymbol{\xi_2} 	\end{multlined}\\
		\in&\,  L^\infty(0, T). 
	\end{aligned}
\end{equation}
By \cite[Ch.\ 2, Theorem 3.5]{gripenberg1990volterra}, the system has a unique solution $\bfmu \in L^\infty(0,T)$. We then consider the problem
\begin{equation}
	\left \{	\begin{aligned}
		&	\frakKone*\bxittt =\bmu, \\
		& (\bfxi, \bxit, \bxitt)\vert_{t=0} =(\bxi_0, \bxi_1, \bxi_2)
	\end{aligned} \right.
\end{equation}
by rewriting it equivalently as
\begin{equation}
	\left \{	\begin{aligned}
		&	\bxitt = \tfrakKone* \bmu +\bxi_2 \in L^\infty(0,T), \\
		& (\bfxi, \bxit)\vert_{t=0} =(\bxi_0, \bxi_1),
	\end{aligned} \right.
\end{equation}
which has a unique solution $\bxi \in W^{2, \infty}(0,T)$. In this way we obtain existence of a unique approximate solution $\ppsi^{(n)} \in W^{2, \infty}(0,T; V_n)$. 
\section{Proof of Theorem~\ref{Thm:WellP_GFEIII_BKuznetsov}} \label{Appendix:WellPproofKB}
\noindent We present here the proof of the $\tau$-uniform well-posedness stated in Theorem~\ref{Thm:WellP_GFEIII_BKuznetsov}.
\begin{proof}
	 Let $\ppsi^* \in \calBBK$ with $\calBBK$ defined in \eqref{def_BBK}. Since this implies that $\ppsi^* \in \calXBK$, the smoothness assumptions on $\aaa$ and $\bbb$ in \eqref{smoothness_a_b} are fulfilled and the smallness assumption on $\aaa$ given in \eqref{smallness_a} follows by reducing $R>0$ (independently of $\tau$). 
	The non-degeneracy condition on $\aaa$ is fulfilled for small enough $R$ as well. 
	Furthermore, we have
	\begin{equation}
		\begin{aligned}
			&\|\calN(\nabla \ppsi^*, \nabla \ppsi^*_t)\|_{W^{1,1}(L^2)}\\
			\lesssim&\, \begin{multlined}[t] \| \nabla \ppsi^*\|_{L^\infty(L^4)}\| \nabla \ppsi_t^*\|_{L^1(L^4)} + \| \nabla \ppsit^*\|_{L^1(L^4)}\| \nabla \ppsit^*\|_{L^\infty(L^4)}+ \|\nabla \ppsi_t^*\|_{L^\infty(L^4)} \|\nabla \ppsitt^*\|_{L^1(L^4)} 
			\end{multlined}\\
			\leq& C(\Omega, T) R^2,
		\end{aligned}
	\end{equation}
	where in the last line we have relied on the embedding $\Hone \hookrightarrow L^4(\Om)$. By employing the bound for the linear problem, we obtain 
	\begin{equation}
		\begin{aligned}
			\|\ppsi\|_{\calXBK} 
			\leq&\,\begin{multlined}[t]
				C_1 \exp \left(C_2T(1+\|\bbb\|_{H^1(H^2)})\right)
				 (\|\ppsi_0\|^2_{H^3}+\|\ppsi_1\|^2_{H^3} + \tau^a  \| \ppsi_2\|^2_{H^2}+ \|f\|_{W^{1,1}(L^2)}^2) .
			\end{multlined}
		\end{aligned}
	\end{equation}
	Since
	\begin{equation}
		\begin{aligned}
			\|\bbb\|_{H^1(H^2)} \lesssim \|\ppsi^*_{t}\|_{L^2(H^2)} +\|\ppsi^*_{tt}\|_{L^2(H^2)}  \lesssim_T R,
		\end{aligned}
	\end{equation}
	we have
	\begin{equation}
		\begin{aligned}
			\|\ppsi\|_{\calXBK} 
			\leq&\, C_1 \exp \left(C_2T(1+TR)\right)(r^2+CR^4).
		\end{aligned}
	\end{equation}
	Therefore, $\ppsi \in \calBBK$ for sufficiently small data size $r$ and radius $R$. \\
	\indent Let $\calT \ppsi^{*} =\ppsi$ and $\calT v^*=v$. Set $\phi=\ppsi-v$ and $\phi^*= \ppsi^*-v^*$. Then the difference $\phi$ solves 
	\begin{equation}
		\begin{aligned}
			&\begin{multlined}[t]	\taua  \frakKone * \phi_{ttt}+\aaa(\ppsi_t^*) \phi_{tt}
				-c^2 \bbb(\ppsi_t^*) \Delta \phi
				-   {\tau^a} c^2 \frakKone* \Delta \phi_t -\ttbdelta \frakKtwo* \Delta \phi_{tt}\end{multlined}\\
			=&\, -2k_1 \phi^*_t v_{tt} -2k_2 \phi_t^*\Delta v-2 \kthree \nabla \phi^*\cdot \nabla u^*_{t}
			-2 \kthree \nabla v^*\cdot \nabla \phi^*_{t} := f
		\end{aligned}
	\end{equation}
	with homogeneous boundary and initial data.  Note that as before we cannot prove contractivity with respect to the $\|\cdot\|_{\calXBK}$ norm by exploiting the linear bound in $\calXBK$ (see \eqref{est_2_GFEIII_BK}), as the right-hand side of this equation does not belong to $W^{1,1}(0,T; H^1(\Omega))$. Instead, we test this equation with $\phi_{tt}$ and prove contractivity with respect to a lower-order norm. To this end, we rely on the following identity:
	\begin{equation}
		\begin{aligned}
			&-c^2\inttO \bbb(\ppsi_t^*)\Delta \phi \phi_{tt} \dxs \\
			=&\,\begin{multlined}[t] c^2\intO (1-2k_2 \ppsi_t^*(t))\Delta \phi(t) \phi_{t}(t) \dx-c^2\inttO (1-2k_2 \ppsi_t^*)\Delta \phi_t \phi_{t} \dxs\\+2k_2c^2\inttO  \ppsi_{tt}^*\Delta \phi \phi_{t} \dxs,
			\end{multlined}
		\end{aligned}
	\end{equation}
from which we have
	\begin{equation}
	\begin{aligned}
		&-c^2\inttO \bbb(\ppsi_t^*)\Delta \phi \phi_{tt} \dxs \\
		=&\, \begin{multlined}[t] 	 -c^2\intO (1-2k_2 \ppsi_t^*(t))\nabla \phi(t) \cdot \nabla \phi_{t}(t) \dx+2k_2 c^2\intO \nabla \ppsi_t^*(t) \cdot \nabla \phi(t) \phi_{t}(t) \dx\\
			+c^2\inttO (1-2k_2 \ppsi_t^*)\nabla \phi_t \cdot \nabla \phi_{t} \dxs-		2k_2c^2\inttO \nabla \ppsi_t^* \cdot \nabla \phi_t   \phi_{t} \dxs \\
			-2k_2c^2\inttO  \ppsi_{tt}^*\nabla  \phi  \cdot \phi_{t} \dxs  -2k_2c^2\inttO  \nabla \ppsi_{tt}^*\cdot \nabla  \phi \phi_{t} \dxs. 
		\end{multlined}	
	\end{aligned}
\end{equation}
	Similarly to the proof of uniqueness for the linear problem in Section~\ref{Sec:GFEIII}, we then have  
	\begin{equation}\label{est_contractivity_GFEIII_BlackstockKuznetsov}
		\begin{aligned}
			&\int_0^t \nLtwo{\sqrt{\aaa(\ppsi_t^*)}\phi_{tt}}^2\ds +  \|\sqrt{\bbb(\ppsi^*_t)}\nabla \phi_t\|^2_{L^\infty(L^2)}\\
			\lesssim&\, \begin{multlined}[t] \|f\|^2_{L^2(L^2)}+(1+\|\ppsi_t^*\|_{L^\infty(L^\infty)})\|\nabla \phi\|_{L^\infty(L^2)}\|\nabla \phi_t\|_{L^\infty(L^2)}\\+\|\nabla \ppsi^*_t\|_{L^\infty(L^\infty)}\|\nabla \phi_t\|_{L^\infty(L^2)}\|\phi_t\|_{L^\infty(L^2)}\\
				+\|\ppsi_t^*\|_{L^\infty(L^\infty)}\|\nabla \phi_t\|^2_{L^2(L^2)}+\|\nabla \ppsi_t^*\|_{L^\infty(L^\infty)}\|\nabla \phi_t\|_{L^2(L^2)}\|\phi_t\|_{L^2(L^2)}\\
				+\|\ppsi_{tt}^*\|_{L^2(L^\infty)} \|\nabla \phi\|_{L^2(L^2)}\|\phi_t\|_{L^\infty(L^2)}+\|\nabla \ppsi_{tt}^*\|_{L^2(L^4)}\|\nabla \phi\|_{L^\infty(L^2)}\|\phi_t\|_{L^2(L^4)}.
			\end{multlined}.
		\end{aligned}
	\end{equation}
	We note that
	\begin{equation}
		\begin{aligned}
			\|f\|^2_{L^2(L^2)}\lesssim&\, \begin{multlined}[t]	\|\phi_t^*\|_{L^\infty(L^4)}\|v_{tt}\|_{L^2(L^4)}+\|\phi_t^*\|_{L^\infty(L^2)}\|\Delta v\|_{L^\infty(L^\infty)}\\
				+\|\nabla \phi^*\|_{L^\infty(L^2)}\|\nabla u_t^*\|_{L^2(L^\infty)}+\|\nabla v^*\|_{L^\infty(L^\infty)}\|\nabla \phi_t^*\|_{L^2(L^2)}. 
			\end{multlined}
		\end{aligned}
	\end{equation}
	Therefore,  from \eqref{est_contractivity_GFEIII_BlackstockKuznetsov} by employing Young's and Gronwall's inequalities, and noting that $\ppsi^* \in \calBBK$, we conclude that
	\begin{equation}
		\begin{aligned}
			\int_0^t \nLtwo{\sqrt{\aaa(\ppsi_t^*)}\phi_{tt}}^2\ds +  \|\sqrt{\bbb(\ppsi^*_t)}\nabla \phi_t\|^2_{L^\infty(L^2)}
			\lesssim&\, R \left(\int_0^t \nLtwo{\phi_{tt}}^2\ds +  \|\nabla \phi_t\|^2_{L^\infty(L^2)}\right).
		\end{aligned}
	\end{equation}
	Thus, one  can guarantee strict contractivity of the mapping $\calT$ in the norm of the space $W^{1,\infty}(0,T; \Honezero) \cap H^1(0,T; L^2(\Omega))$  by reducing the radius $R$. 
\end{proof}
\end{appendices}

\bibliography{references}{}
\bibliographystyle{siam} 
\end{document}